\newtheorem{theorem}{Theorem}[section]
\newtheorem{lemma}[theorem]{Lemma}
\newtheorem{proposition}[theorem]{Proposition}
\newtheorem{corollary}[theorem]{Corollary}
\newtheorem*{theorem*}{Theorem}
\theoremstyle{remark}
\newtheorem{remark}[theorem]{Remark}
\newtheorem{definition}[theorem]{Definition}
\newtheorem{example}[theorem]{Example}
\numberwithin{equation}{section}
\newcommand{\Z}{\mathbb{Z}}
\newcommand{\N}{\mathbb{N}}
\newcommand{\C}{\mathbb{C}}
\newcommand{\A}{\mathcal{A}}
\newcommand{\G}{\mathcal{G}}
\newcommand{\Pa}{\partial}
\newcommand{\F}{\mathcal{F}}
\newcommand{\Sfin}{\Sigma_\A^\textnormal{fin}}
\newcommand{\Sinf}{\Sigma_\A^\textnormal{inf}}
\newcommand{\Xfin}{X^\textnormal{fin}}
\newcommand{\Xinf}{X^\textnormal{inf}}
\newcommand{\0}{\vec{0}}
\begin{document}
\title{One-Sided Shift Spaces Over Infinite Alphabets}

\author{William Ott 
}

\author{Mark Tomforde 
}

\author{Paulette N. Willis 
}

\address{Department of Mathematics \\ University of Houston \\ Houston, TX 77204-3008 \\USA}
\email{ott@math.uh.edu}

\address{Department of Mathematics \\ University of Houston \\ Houston, TX 77204-3008 \\USA}
\email{tomforde@math.uh.edu}

\address{Department of Mathematics \\ University of Houston \\ Houston, TX 77204-3008 \\USA}
\email{pnwillis@math.uh.edu}

\thanks{This work was partially supported by a grant from the Simons Foundation (\#210035 to Mark Tomforde) and also partially supported by NSF Mathematical Sciences Postdoctoral Fellowship DMS-1004675.}

\date{\today}

\subjclass[2010]{37B10, 46L55}

\keywords{Symbolic dynamics, one-sided shift spaces, sliding block codes, infinite alphabets, shifts of finite type, $C^*$-algebras}

\begin{abstract}

We define a notion of (one-sided) shift spaces over infinite alphabets.  Unlike many previous approaches to shift spaces over countable alphabets, our shift spaces are compact Hausdorff spaces.  We examine shift morphisms between these shift spaces, and identify three distinct classes that generalize the shifts of finite type.  We show that when our shift spaces satisfy a property that we call ``row-finite", shift morphisms on them may be identified with sliding block codes.  As applications, we show that if two (possibly infinite) directed graphs have edge shifts that are conjugate, then the groupoids of the graphs are isomorphic, and the $C^*$-algebras of the graphs are isomorphic.
\end{abstract}

\maketitle

\tableofcontents

\section{Introduction}

In symbolic dynamics one begins with a set of symbols and considers spaces consisting of sequences of these symbols that are closed under the shift map.  There are two approaches that are used: one-sided shift spaces that use sequences of symbols indexed by $\N$, and two-sided shift spaces that use bi-infinite sequences indexed by $\Z$.  In this paper, we shall be concerned exclusively with one-sided shifts.

In the classical construction of a one-sided shift space, one begins with a finite set $\A$ (called the \emph{alphabet} or \emph{symbol space}) and then considers the set $$\A^\N := \A \times \A \times \ldots$$ consisting of all sequences of elements of $\A$.  If we give $\A$ the discrete topology, then $\A$ is compact (since $\A$ is finite), and Tychonoff's theorem implies that $\A^\N$ with the product topology is also compact.   In addition, the shift map $\sigma : \A^\N \to \A^\N$ defined by $\sigma(x_1x_2x_3 \ldots) := x_2x_3x_4\ldots$ is continuous. The pair $(\A^\N, \sigma)$ is called the (one-sided) \emph{full shift space}, and a \emph{shift space} is defined to be a pair $(X,\sigma|_X)$ where $X$ is subset of $\A^\N$ such that $X$ is closed and $\sigma(X) \subseteq X$.  Since $X$ is a closed subset of a compact space, $X$ is also compact.  In the analysis of shift spaces the compactness plays an essential role, and many fundamental results rely on this property.

Attempts to develop a theory of shift spaces when the alphabet $\A$ is infinite (even countably infinite) have often been stymied by the fact that the spaces considered are no longer compact --- and worse yet, not even locally compact.  For instance, if one takes a countably infinite set $\A = \{a_1, a_2, \ldots \}$, one can give $\A$ the discrete topology and consider the space
$$\A^\N := \A \times \A \times \ldots$$
with the product topology.  In this situation, the shift map $\sigma : \A^\N \to \A^\N$ defined by $\sigma(x_1x_2x_3 \ldots) := x_2x_3x_4\ldots$ is continuous.  However, the space $\A^\N$ is no longer compact or even locally compact.  For example, any open set $U$ in $\A^\N$ must contain a basis element of the form $$Z(x_1 \ldots x_m) = \{ x_1 \ldots x_m z_{m+1} z_{m+2} \ldots \in \A^\N : \text{ $z_k \in \A$ for $k \geq m+1$} \},$$  and if we define $x^n := x_1 \ldots x_m a_n a_n a_n \ldots$, then $\{ x^n \}_{n=1}^\infty$ is a sequence in $Z(x_1 \ldots x_m)$ without a convergent subsequence.  Hence the closure of $\emph{U}$ is not (sequentially) compact, and $\A^\N$ is not locally compact.  Therefore, if we define a shift space over $\A$ to be a pair $(X,\sigma|_X)$ where $X$ is a closed subset of $\A^\N$ with the property that $\sigma(X) \subseteq X$, then the set $X$ will be a closed, but not necessarily compact, subset of $\A^\N$.  This lack of compactness makes it difficult to establish results for such subspaces, and as a result this approach to shift spaces over countable alphabets has encountered difficulties.

The purpose of this paper is to give a new definition for the (one-sided) full shift and its subshifts when the alphabet $\A$ is infinite.  In this new definition the full shift and all shift spaces are compact, and this will allow techniques from the classical theory of shifts over finite alphabets to be more readily generalized to this setting.  It is our hope that this new definition will allow for applications to dynamics that are unavailable using current methods.  Furthermore, our new definition reduces to the classical definition when $\A$ is finite.

The key idea of our new definition of the full shift is to begin with an infinite alphabet $\A$ that we endow with the discrete topology.  We then let $\A_\infty = \A \cup \{ \infty \}$ denote the one-point compactification of $\A$.  Since $\A_\infty$ is compact, the product space 
$$X_\A := \A_\infty \times \A_\infty \times \ldots$$
is compact.  However, we do not want to take $X_\A$ as our definition of the full shift, since it includes sequences that contain the symbol $\infty$, which is not in our original alphabet.  Therefore, we shall consider an identification of elements of $X_\A$ with infinite and finite sequences of elements in $\A$.  Specifically, we do the following:  If $x = x_1 x_2 \ldots \in X_\A$ has the property that $x_i \neq \infty$ for all $i \in \N$, then we do nothing and simply consider this as an infinite sequence of elements of $\A$.  If $x = x_1 x_2 \ldots \in X_\A$ has an $\infty$ occurring, we consider the first place that such an $\infty$ appears; for example, write $x = x_1 \ldots x_n \infty \ldots$ with $x_i \neq \infty$ for $1 \leq i \leq n$ and identify $x$ with the finite sequence $x_1 \ldots x_n$.  In this way we define an equivalence relation $\sim$ on $X_\A$ such that the quotient space $X_\A / \sim$ of all equivalence classes is identified with the collection of all sequences of symbols from $\A$ that are either infinite or finite (details of this equivalence relation are described in Section~\ref{top-def-subsec}).  We let $\Sigma_\A$ denote the set of all finite and infinite sequences of elements of $\A$, and using the identification of $\Sigma_\A$ with $X_\A / \sim$, we give $\Sigma_\A$ the quotient topology it inherits from $X_\A$.  While quotient topologies are in general not well behaved, we can prove that with this topology the space $\Sigma_\A$ is both compact and Hausdorff.  Moreover, the shift map $\sigma : \Sigma_\A \to \Sigma_\A$, which simply removes the first entry from any sequence, is a map on $\Sigma_\A$ that is continuous at all points except the empty sequence.  We then define the one-sided full shift to be the pair $(\Sigma_\A, \sigma)$.

Next we define shift spaces.  As usual, we want to consider subsets of $\Sigma_\A$ that are closed and invariant under $\sigma$; however, we also want an additional property. Motivated by classical edge shifts of finite graphs having no sinks, we require that any finite sequence in the subset can be extended to an infinite sequence in the subset with infinitely many choices of the next symbol (or, in more precise language: for any 
finite sequence $w$ in our shift space there exist sequences of the form $w a x$ in the shift space for infinitely many distinct $a \in A$).  We call this the ``infinite-extension property", and a precise definition is given in Definition~\ref{inf-ext-def}.  We thus define a \emph{shift space} to be a pair $(X,\sigma|_X)$ where $X$ is a subset of $\Sigma_\A$ such that $X$ is closed, $\sigma(X) \subseteq X$, and $X$ has the ``infinite-extension property".  As closed subsets of $\Sigma_\A$, our shift spaces will necessarily be compact.  In this paper we lay the groundwork for the study of these spaces, and a study of morphisms between them.  We hope that this approach will be useful for extending certain aspects of symbolic dynamics to the case of infinite alphabets, as well as allowing methods from symbolic dynamics to be applied to graph $C^*$-algebras of graphs with infinitely many edges.

Many of our inspirations for the topology on the set $\Sigma_\A$ come from the theory of graph $C^*$-algebras and the study of the boundary path space of a graph.  Since the fundamental work of Cuntz and Krieger in \cite{Cun, CK} it has been known that the Cuntz-Krieger algebras (i.e., $C^*$-algebras associated with finite graphs having no sinks or sources) are intimately related to the shift spaces of the graphs --- and, in particular, that conjugacy of the one-sided shift spaces of two graphs implies isomorphism of the $C^*$-algebras of those graphs.

This relationship has been explored in many contexts throughout the three decades since Cuntz and Krieger's work.  The ideas that are most influential for us in defining a notion of one-sided shift spaces for infinite alphabets are Paterson's work on (topological) groupoids for infinite graphs \cite{Pat}, Paterson and Welch's construction of a product of locally compact spaces that satisfies a Tychonoff theorem~\cite{PW}, Yeend's work on groupoids of topological graphs \cite{Yeend-thesis, Yeend}, and Webster's work on path spaces and boundary path spaces of graphs \cite{Webster-thesis, Web}.  These constructions, which are all related, provide motivation for our construction of the space $\Sigma_\A$ --- both as a set and as a topological space.

In the past few decades there have been numerous efforts by various authors to define and study analogues of shift spaces over countable alphabets, most commonly in the context of countable-state Markov chains (or equivalently, shifts coming from countable directed graphs or matrices).   For the reader's benefit we mention a few of these:  The paper \cite{GS} by Gurevich and Savchenko contains a detailed survey of the theory of symbolic Markov chains over an infinite alphabet as well as several expositions of results of the authors; Petersen has shown in \cite{Pet} that there is no Curtis-Hedlund-Lyndon theorem for factor maps between tiling dynamical systems; in the paper \cite{FF} D.~Fiebig and U.~Fiebig examine continuous shift-commuting maps from transitive countable-state Markov shifts into compact subshifts; and Wagoner in \cite{Wag87, Wag88} has studied the group of uniformly continuous shift-commuting maps with uniformly continuous inverse on two-sided Markov shifts over countable alphabets.  Significant progress has also been made on the development of thermodynamic formalism for symbolic Markov chains with countably many states (e.g.~\cite{BBG07, CS, FFY, GS, IY, MU, Sar99}).  Phase transitions have been investigated in this context (e.g.~\cite{PZ, Sar01, Sar06}), and some countable-state Markov shifts have been classified up to almost isomorphism.  Boyle, Buzzi, and G\'{o}mez~\cite{BBG06} show that two strongly positive recurrent Markov shifts are almost isomorphic if and only if they have the same entropy and period.  Markov towers, abstract models resembling countable-state Markov chains, encode statistical properties of many dynamical systems that possess some hyperbolicity.  Young~\cite{You98, You99} introduces the abstract tower model and uses it to prove that correlations in the finite-horizon Lorentz gas decay at an exponential rate.  We also mention the work of Exel and Laca in \cite{EL}, where they construct ``Cuntz-Krieger algebras for infinite matrices".  Their realization of these $C^*$-algebras as a crossed product allows them to identity the spectrum of the diagonal algebra with a compactification of the set of infinite paths, and in the last sentence of the introduction of \cite{EL} the authors suggest this space may be a suitable replacement for the infinite path space in the study of topological Markov chains with infinitely many states.

The papers listed in the previous paragraph show that there have been many different approaches to shift spaces over infinite alphabets, and even many different definitions of what a shift space (or Markov chain) over an infinite alphabet should be.  The results produced by these different theories suggest the possibility that there is no one correct definition of a shift space over an infinite alphabet, but rather different definitions that are useful for different purposes.  (A remark to this effect is explicit in \cite{GS}, where the authors emphasized this viewpoint with a descriptor for countable-state shifts of symbolic Markov chains rather than topological Markov chains, and this idea is also alluded to in \cite{BBG06}.)  

Our definition of a shift space over an infinite alphabet provides a new addition to the panoply of definitions that have come before and a new avenue for exploration.  The novel features of our definition are (1) our shift spaces are compact, which allows for many topological results from the finite alphabet case to be generalized to our spaces, and (2) our shift spaces are intimately related to path spaces of directed graphs, and as a result have applications to Cuntz-Krieger algebras and $C^*$-algebras of graphs.  This second feature, in particular, shows that among the myriad definitions given by prior authors, our definition of a shift space seems to be the most advantageous for working with $C^*$-algebras.

This paper is organized as follows:  In Section~\ref{fullshift-sec} we give a formal definition of our one-sided shift space $(\Sigma_\A, \sigma)$ for an infinite alphabet $\A$.  Specifically, in Section~\ref{top-def-subsec} we define $\Sigma_\A$ as a topological space, and prove that it is compact and Hausdorff.  In Section~\ref{basis-top-subsec} we show that $\Sigma_\A$ has a basis of ``generalized cylinder sets", and we use this basis to get a better understanding of the topology and describe pointwise convergence in $\Sigma_\A$.   In Section~\ref{metrics-sec} we show that when $\A$ is countable (and hence $\Sigma_\A$ is second countable), there exists a natural family of metrics on $\Sigma_\A$ that produces our topology.  In Section~\ref{shift-map-subsec} we prove that the shift map $\sigma: \Sigma_\A \to \Sigma_\A$ is continuous at all points except the empty sequence $\0 \in \Sigma_\A$, and the restriction of $\sigma_\A$ to $\Sigma_\A \setminus \{ \0 \}$ is a local homeomorphism.

In Section~\ref{shift-spaces-sec} we define shift spaces as closed subspaces of $\Sigma_\A$ that are invariant under the shift map $\sigma$ and have the ``infinite-extension property" (see Definition~\ref{inf-ext-def}).  The infinite-extension property, which is vacuously satisfied in the finite alphabet case, ensures that finite sequences in shift spaces can be extended to infinite sequences and this extension can be done with an infinite number of choices for the next symbol.  (In dynamics terms, this is often described as saying that finite sequences end at symbols with ``infinite followers sets"; in graph terms it is often said the sequences end at vertices that are ``infinite emitters".)  We show that with our definition, shift spaces can be described in terms of \emph{forbidden blocks}, and any shift space may be recovered from those blocks that do not appear in any of its (finite or infinite) sequences.  We also establish some basic properties of shift spaces, and identify two important classes: the \emph{finite-symbol} shift spaces, which can be realized as shift spaces over finite alphabets as in the classical case, and the \emph{row-finite} shift spaces in which every symbol has a finite number of symbols that may follow it.  In particular, row-finite spaces have no nonempty finite sequences, and thus every element in a row-finite shift space is either an infinite sequence or the empty sequence $\0$.  We conclude Section~\ref{shift-spaces-sec} with several characterizations of the finite-symbol shift spaces and the row-finite shift spaces.

In Section~\ref{shift-morphisms-sec} we define shift morphisms as maps between shift spaces that are continuous, commute with the shift, and preserve lengths of sequences. These shift morphisms appear to be more complicated than the ``sliding block codes" that arise in the finite alphabet setting. We establish some basic results in this section, and conclude the section by defining \emph{conjugacy}, which is the notion of isomorphism in our category.

In Section~\ref{analogues-of-SFT-sec} we consider analogues of shifts of finite type.  In the finite alphabet case, it is well known that a shift space is a shift of finite type (i.e., described by a finite set of forbidden blocks) if and only if it is conjugate to the edge shift coming from a finite graph with no sinks if and only if it is an $M$-step shift (i.e., the shift is described by a set of forbidden blocks all of length $M+1$).  We show that in the infinite alphabet case these three classes are distinct --- namely, the conjugacy classes of shifts of finite type, edge shifts, and $M$-step shifts are distinct.  We describe how these classes are related and identify the class of edge shifts as the class that is the most reasonable for extending classical results for shifts of finite type to the infinite alphabet situation.

In Section~\ref{row-finite-analogues-of-SFT-sec} we analyze our three generalizations of shifts of finite type in the row-finite setting.  Here things are a bit nicer: We show that the only row-finite shifts of finite type are the finite-symbol shifts, and are thus covered by the classical case.  We also show that the class of row-finite edge shifts coincides with the class of row-finite $M$-step shifts.  Again, it is this class of row-finite edge shifts (equivalently, row-finite $M$-step shifts) that seems most  reasonable for extending classical results for shifts of finite type to the infinite alphabet situation.

In Section~\ref{SBC-row-finite-sec} we consider shift morphisms on row-finite shift spaces.  We show that in the row-finite setting all shift morphisms come from ``sliding block codes" (see Theorem~\ref{SBC-ff-shift-morphism-thm}).  Unlike the finite alphabet case, however, we classify these into two types: unbounded and bounded (see Definition~\ref{SBC-def}). The bounded sliding block codes are just like the sliding block codes in the finite alphabet case, but the unbounded sliding block codes require a sequence of $N$-block maps, one for each symbol, that are unbounded in $N$.  We show that, as in the finite alphabet case, bounded sliding block codes may be recoded to $1$-block codes (see Proposition~\ref{SBC-ff-shift-morphism-thm}).  We conclude Section~\ref{SBC-row-finite-sec} with a characterization of bounded sliding block codes in Proposition~\ref{bounded-fff-uniformly-cts-prop}.

In Section~\ref{C*-algebras-sec} we connect our ideas with $C^*$-algebras and give applications of our results.  In Section~\ref{C*-alg-groupoid-subsec} we show that if we have two (possibly infinite) graphs with no sinks, then conjugacy of the edge shifts of these graphs implies isomorphism of the $C^*$-algebras of the graphs (see Corollary~\ref{graph-C*-iso-cor}).  Indeed we are able to prove something slightly stronger:  conjugacy of the edge shifts of the graphs implies isomorphism of the graph groupoids (see Theorem~\ref{shifts-conjugate-implies-groupoids-iso-thm}).  We consider this strong supporting evidence that our definition of the one-sided edge shift given in this paper is the correct one in the context of working with $C^*$-algebras.  Given the long-standing relationship between symbolic dynamics and $C^*$-algebras, it is reassuring to see that these important implications still hold in the infinite alphabet case.  In Section~\ref{LPA-subsec} we establish as a corollary that if $E$ and $F$ are (possibly infinite) graphs with no sinks, then conjugacy of the edge shifts of these graphs implies isomorphism of the complex Leavitt path algebras $L_\C(E)$ and $L_\C(F)$.  In Section~\ref{row-finite-graph-alg-subsec} we show that when we have a bounded sliding block code between row-finite edge shifts, we can recode to a $1$-block map and obtain an explicit isomorphism between the graph $C^*$-algebras and also between the Leavitt path algebras over any field (see Theorem~\ref{ConjIso}).  Since all shift morphisms are bounded sliding block codes in the finite alphabet case, this implies that if $K$ is any field, and if $E$ and $F$ are finite graphs with no sinks and conjugate edge shifts, then the Leavitt path algebras $L_K(E)$ and $L_K(F)$ are isomorphic.

$ $

\noindent \textbf{Notation and Terminology:}  Throughout we take the natural numbers to be the set $\N = \{1, 2, 3 \ldots \}$.  The term \emph{countable} will mean either finite or countably infinite.    Since we are writing for two audiences that may have different backgrounds (symbolic dynamicists and $C^*$-algebraists) we strive to make the exposition as clear as possible, explain our motivations, and provide examples.  We do our best to be clear without being pedantic.  Throughout we will often choose terminology motivated by graph algebras (e.g., ``row-finite", ``sinks", ``infinite emitters") --- even though we know these are not the terms most dynamicists would choose.  We do this because the study of graphs, and the theory of their $C^*$-algebras, are where our motivation comes from, and we believe that interactions with graph $C^*$-algebras will be at the forefront of the applications of these ideas.

\section{The Full Shift over an Infinite Alphabet} \label{fullshift-sec}

In this section we define the one-sided full shift $(\Sigma_\A, \sigma)$ over a (possibly infinite) alphabet $\A$.  We shall first define the set $\Sigma_\A$, and topologize $\Sigma_\A$ in such a way that it is a compact Hausdorff space.  Afterward, we describe a  convenient basis for $\Sigma_\A$ that gives us a better understanding of the topology, and we use this basis to characterize sequential convergence in $\Sigma_\A$.  At the end of this section we define the shift map $\sigma : \Sigma_{\A} \to \Sigma_\A$ and show that it is continuous at all points except the empty sequence $\0 \in \Sigma_\A$.

\subsection{Definition of the Topological Space $\Sigma_\A$} \label{top-def-subsec}

Suppose that $\A$ is an infinite set, which we shall call an \emph{alphabet}.  The elements of $\A$ will be called \emph{letters} or \emph{symbols}.  

We define $\A^0 := \{ \0 \}$ where $\0$ is the \emph{empty sequence} consisting of no terms, and for each $k \in \N$ we define $$\A^k := \underbrace{\A \times \ldots \times \A}_{\text{$k$ copies}}$$ to be the product of $k$ copies of $\A$.  We also define $$ \A^\N := \A \times \A \times \ldots$$ to be the product of a countably infinite number of copies of $\A$.  Observe that the sets $\A^\N$ and $\A^k$ for $k \in \mathbb{N} \cup \{ 0 \}$ are pairwise disjoint.  

\begin{definition} \label{Sigma-X-def}
We define $\Sigma_{\A}$ to be the disjoint union 
\begin{equation*}
\Sigma_{\A} := \A^\N \cup \bigcup_{k=0}^\infty \A^k.
\end{equation*}

We refer to the elements of $\Sigma_{\A}$ as \emph{sequences}. (We use this terminology despite the fact that some of our sequences have a finite number of terms.)  We define a function $l : \Sigma_{\A} \to \{ 0, 1, 2, \ldots, \infty \}$ by $l(x) = \infty$ if $x \in \A^\N$ and $l(x) = k$ if $x \in \A^k$.  Note that the empty word $\0$ is the unique sequence with $l(\0) = 0$.  We call the value $l(x)$ the \emph{length} of the sequence $x$.  We define $\Sinf := \A^\N$ and call the elements of this set \emph{infinite sequences}, and we define $\Sfin := \bigcup_{k=0}^\infty \A^k$ and call the elements of this set \emph{finite sequences}.   If $x \in \Sigma_\A$, then when $0 < l(x) < \infty$ we denote the entries of $x$ as  $x = x_1 \ldots x_{l(x)}$ with $x_i \in \A$, and when $l(x) = \infty$, we denote the entries of $x$ as $x = x_1 x_2 x_3 \ldots$ with $x_i \in \A$.

At this point we wish to topologize $\Sigma_\A$.

\begin{definition} \label{A-infty-def}
If $\A$ is an infinite set, give $\A$ the discrete topology and define $\A_\infty := \A \cup \{ \infty \}$ to be minimal compactification of $\A$.  Since $\A$ is infinite, $\A_\infty := \A \cup \{ \infty \}$ is the one-point compactification of $\A$.  Note, in particular, that the topology on $\A_\infty$ is given by the collection:
$$\{ U : U \subseteq \A \} \cup \{ \A_\infty \setminus F : \text{$F$ is a finite subset of $\A$} \}.$$
so that the open sets of $\A_\infty$ are the subsets of $\A$ together with the complements of finite subsets of $\A$.
\end{definition}

Let $\A$ be an infinite set.  Then $\A_\infty$ is a compact Hausdorff space, and by Tychonoff's theorem the countably infinite product space $$X_\A := \A_\infty \times \A_\infty \times \ldots$$ is a compact Hausdorff space.  Define a function $$Q : X_\A \to \Sigma_{\A}$$ by 
\begin{equation*}
Q (x_1 x_2 \ldots ) = \begin{cases} \0 & \text{ if $x_1 = \infty$} \\ x_1 \ldots x_n & \text{ if $x_{n+1} = \infty$ and $x_i \neq \infty$ for $1 \leq i \leq n$} \\ x_1 x_2 x_3 \ldots & \text{ if $x_i \neq \infty$ for all $i \in \N$.} \end{cases}
\end{equation*}
Observe that $Q$ is surjective.  We give $\Sigma_A$ the quotient topology induced from $X_\A$ via the map $Q$; in particular, with this definition $U \subseteq \Sigma_A$ is open if and only if $Q^{-1}(U) \subseteq X_\A$ is open.
\end{definition}

\begin{remark}
The map $Q$ defines an equivalence relation on the space $X_\A$ by $x \sim y$ if and only if $Q(x) = Q(y)$.  Under this equivalence relation, two elements of $X_\A$ are equivalent precisely when they have all entries equal up to  the first appearance of the symbol $\infty$, and the equivalence class of such an $x \in X_\A$ is identified with the (finite or infinite) sequence in $\Sigma_\A$ having the same entries as $x$ up to the first appearance of $\infty$.  Note that it is quite possible that there are no occurrences of $\infty$ in $x$.
\end{remark}

\begin{example}
Suppose $\A = \{ a_1, a_2, \ldots \}$.  The elements 
\begin{align*}
x &:= a_1 a_2 \infty a_1 \infty \infty a_1 \ldots \in X_\A \\
y &:= a_1 a_2 \infty a_2 a_7 \infty a_2 \ldots  \in X_\A 
\end{align*}
are equivalent in $X_\A$ and each is identified with the finite sequence $a_1 a_2 \in \Sigma_{\A}$ of length $2$.  The element 
$$z:=a_1 a_2 a_3 a_4 \infty a_2 \ldots \in X_\A$$
is not equivalent to either $x$ or $y$ and is identified with the finite sequence $a_1 a_2 a_3 a_4 \in \Sigma_\A$ of length $4$.  Any element $x = x_1 x_2 x_3 \ldots \in X_\A$ with $x_i \neq \infty$ for all $i \in \N$ is identified with the infinite sequence $x_1 x_2 x_3 \ldots \in \Sigma_\A$, and no element of $X_\A$ other than $x$ itself is equivalent to $x$.  The element $$\infty \infty \infty \ldots \in X_\A$$ is identified with the empty sequence $\0 \in \Sigma_\A$.  In fact, every element $x \in X_\A$ such that $x_1= \infty$ is equivalent to $\infty \infty \infty \ldots$.
\end{example}

\begin{proposition}
The space $\Sigma_\A$ is a compact Hausdorff space.
\end{proposition}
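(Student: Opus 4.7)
The plan is to treat compactness and Hausdorffness separately. Compactness is immediate: $X_\A$ is compact by Tychonoff's theorem, the map $Q : X_\A \to \Sigma_\A$ is continuous by the very definition of the quotient topology, and $Q$ is surjective, so $\Sigma_\A = Q(X_\A)$ is compact as the continuous image of a compact space.

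For the Hausdorff property, I would invoke the standard fact that the quotient of a compact Hausdorff space by a closed equivalence relation is Hausdorff. Since $X_\A$ is compact Hausdorff, it suffices to show that the graph
\[
R := \{(x, y) \in X_\A \times X_\A : Q(x) = Q(y)\}
\]
is a closed subset of $X_\A \times X_\A$. I would prove this directly: given $(x, y) \notin R$, write $u = Q(x)$ and $v = Q(y)$, and construct a basic open box $N_x \times N_y$ containing $(x, y)$ whose intersection with $R$ is empty.

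The case split is governed by how $u$ and $v$ disagree. Either (i) there is a smallest position $n$ at which both $u_n$ and $v_n$ are defined and differ in $\A$, or (ii) one of $u$, $v$ is a finite sequence that is a proper initial segment of the other (the possibility $u = \0$ falls under this case with $m = 0$). In case (i) I would take $N_x$ and $N_y$ to be the basic cylinders that fix the first $n$ coordinates to equal those of $x$ and $y$ respectively; these are open because singletons of letters in $\A$ are open in $\A_\infty$. In case (ii), writing $u \in \Sfin$ with $l(u) = m$ and $v$ extending $u$, I would set
\[
N_x = \{z \in X_\A : z_i = u_i \text{ for } 1 \leq i \leq m,\ z_{m+1} \in \A_\infty \setminus \{v_{m+1}\}\}
\]
and $N_y = \{w \in X_\A : w_i = v_i \text{ for } 1 \leq i \leq m+1\}$; openness here uses additionally that $\{v_{m+1}\}$ is closed in $\A_\infty$.

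The main obstacle is the bookkeeping in case (ii): one must check that $Q(z) \neq Q(w)$ both when $z_{m+1} = \infty$ (so $Q(z)$ has length at most $m$, which is strictly less than the length of $Q(w)$) and when $z_{m+1} \in \A \setminus \{v_{m+1}\}$ (so $Q(z)$ and $Q(w)$ both have length at least $m+1$ and disagree at position $m+1$). Case (i) is straightforward from the observation that the first $n$ coordinates of any $z \in N_x$ lie in $\A$ and agree with $x$ through position $n$, and similarly for $w \in N_y$, so $Q(z)$ and $Q(w)$ have length at least $n$ and differ at position $n$. Once $R$ is shown closed, the Hausdorff property of $\Sigma_\A$ follows from the general quotient theorem, completing the proof.
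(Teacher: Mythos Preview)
Your proof is correct. Both you and the paper reduce the Hausdorff property to showing that the equivalence relation $R = \{(x,y) : Q(x)=Q(y)\}$ is closed in $X_\A \times X_\A$, invoking the standard quotient criterion; compactness is handled identically.

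The implementations diverge at the level of how closedness of $R$ is established. The paper argues sequentially: it takes a sequence $\{(x^n,y^n)\}\subseteq R$ converging to $(x,y)$ and, after splitting into the cases where the lengths $l(Q(x^n))$ are bounded versus unbounded, shows $Q(x)=Q(y)$. You instead show the complement of $R$ is open by exhibiting, for each $(x,y)\notin R$, an explicit basic box $N_x\times N_y$ disjoint from $R$, with a case split on whether $Q(x)$ and $Q(y)$ first disagree at a common coordinate or one is a proper initial segment of the other. Your argument is a bit more hands-on but has the advantage of not relying on sequential closedness sufficing; this matters in principle because when $\A$ is uncountable the one-point compactification $\A_\infty$ is not first countable at $\infty$, so $X_\A$ need not be first countable either. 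The paper's sequential argument is slicker to write down and makes the structure of the equivalence classes transparent, but your direct open-set construction is the more robust of the two.
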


\begin{proof}
Since the quotient map $Q : X_\A \to \Sigma_\A$ is continuous, and $X_\A$ is compact, it follows that $\Sigma_\A$ is compact.  To prove that $\Sigma_\A$ is Hausdorff, \cite[Proposition~5.4 of Appendix~A]{Massey} shows it suffices to show that the set $G := \{ (x,y) \in X_\A \times X_\A : Q(x) = Q(y) \}$ is closed in $X_\A \times X_\A$.  Suppose that $\{ (x^n, y^n) \}_{n=1}^\infty$ is a sequence of points in $G$ with $\lim_{n \to \infty} (x^n, y^n) = (x,y) \in X_{\A} \times X_{\A}$.  Write $x^n := x^n_1x^n_2 x^n_3 \ldots$ and $y^n := y^n_1 y^n_2 y^n_3 \ldots$ for each $n \in \N$.  Also write $x=x_1x_2x_3 \ldots$ and $y = y_1 y_2 y_3 \ldots$.  Then $\lim_{n\to \infty} x^n_i = x_i$ and  $\lim_{n\to \infty} y^n_i = y_i$ for all $i \in \N$.

For each $z = z_1 z_2 z_3 \ldots \in X_\A$, define $$L(z) :=  \begin{cases} 0 & \text{ if $z_1 = \infty$} \\ N & \text{ if $x_{N+1} = \infty$ and $x_i \neq \infty$ for $1 \leq i \leq N$} \\ \infty  & \text{ if $x_i \neq \infty$ for all $i \in \N$.} \end{cases}
$$
Note that $L(z) = l(Q(z))$.  Since $Q(x^n) = Q(y^n)$ for all $n \in \N$, we see that $L(x^n) = L(y^n)$ for all $n \in \N$.  We shall look at the sequence $\{ L(x^n) \}_{n=1}^\infty$, and consider two cases.

The first case is that $\{ L(x^n) \}_{n=1}^\infty$ is bounded.  Then, by passing to a subsequence, we may suppose that $L(x^n)$ is equal to a constant value $N$ for all $n \in \N$.  Then $x^n_{N+1} = y^n_{N+1} = \infty$ for all $n \in \N$, and taking limits shows $x_{N+1} = y_{N+1} = \infty$.  Also, since $x^n_i \neq \infty$ and $y^n_i \neq \infty$ for all $1 \leq i \leq N$, and since $Q(x^n) = Q(y^n)$ for all $n \in \N$, we have $x^n_i = y^n_i$ for all $1 \leq i \leq N$.  Thus $x_i = \lim_{n \to \infty} x^n_i = \lim_{n \to \infty} y^n_i = y_i$ for all $1 \leq i \leq N$.  Hence $Q(x) = Q(y)$ and $(x,y) \in G$.

The second case is that $\{ L(x^n) \}_{n=1}^\infty$ is not bounded.  By passing to a subsequence, we may assume that $\lim_{n \to \infty}  L(x^n) = \infty$.  Choose $i \in \N$.  Since $L(x^n) > i$ eventually, the fact that $Q(x^n) = Q(y^n)$ implies that $x^n_i = y^n_i$ for large enough $n$.  Thus $ \lim_{n \to \infty} x^n_i = \lim_{n \to \infty} y^n_i = y_i$.  Hence $x_i = y_i$ for all $i \in \N$, and $x=y$.  Hence $Q(x) = Q(y)$ and $(x,y) \in G$.
\end{proof}

\subsection{A Basis for the Topology on $\Sigma_\A$} \label{basis-top-subsec}

Since $\Sigma_\A$ is defined as a quotient space ---and quotient topologies are notoriously difficult to work with --- we shall exhibit a basis for $\Sigma_\A$ that will be convenient in many applications.  The basis we give is in terms of ``generalized cylinder sets" and it generalizes the topology one encounters in shift spaces over finite alphabets.  Using this basis we derive a characterization of sequential convergence in $\Sigma_\A$ (see Corollary~\ref{convergence-in-full-shift-cor}.)

\begin{definition}
If $x \in \Sfin$ and $y \in \Sigma_\A$ we define the \emph{concatenation} of $x$ and $y$ to be the sequence $xy \in \Sigma_\A$ obtained by listing the entries of $x$ followed by the entries of $y$.  We interpret $x\0=x$  for all $x \in \Sfin$ and $\0 y = y$ for all $y \in \Sigma_\A$.  Note that $l(xy) = l(x) + l(y)$ for all $x \in \Sfin$ and $y \in \Sigma_\A$.
\end{definition}

\begin{definition}
If $x \in \Sfin$, we define the \emph{cylinder set} of $x$ to be the set $$Z(x) := \{ xy : y \in \Sigma_\A \}.$$  Note that we always have $x \in Z(x)$ (simply take $y = \0$), and if $x,y \in \Sfin$ the following relation is satisfied:
\begin{equation} \label{cylinder-intersect-eq}
Z(x) \cap Z(y) = \begin{cases} Z(y) & \text{ if $y = xz$ for some $z \in \Sfin$} \\ 
Z(x) & \text{ if $x = yz$ for some $z \in \Sfin$} \\ 
\emptyset & \text{ otherwise.} \\ 
\end{cases}
\end{equation}
In addition, we have $Z(\0) = \Sigma_\A$.
\end{definition}

\begin{definition}
If $x \in \Sfin$ and $F \subseteq \A$ is a finite subset, we define the \emph{generalized cylinder set} of the pair $(x, F)$ to be the set $$Z(x,F) := Z(x) \setminus \bigcup_{e \in F} Z(xe).$$  Note that if $F = \emptyset$, then $Z(x,F) = Z(x)$ is a cylinder set.  Thus every cylinder set is also a generalized cylinder set.
\end{definition}

\begin{lemma} \label{cylinder-compopen-lem}
If $x \in \Sfin$ and $F \subseteq \A$ is a finite subset, the generalized cylinder set $Z(x,F)$ is a compact open subset of $\Sigma_\A$.
\end{lemma}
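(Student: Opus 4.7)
The plan is to reduce everything to concrete statements about subsets of the product space $X_\A$, using the fact that $U \subseteq \Sigma_\A$ is open iff $Q^{-1}(U)$ is open. The key observation is that for a finite sequence $x = x_1 \ldots x_n \in \Sfin$ with $n \geq 1$, unwinding the definition of $Q$ gives
\[
Q^{-1}(Z(x)) = \{x_1\} \times \cdots \times \{x_n\} \times \A_\infty \times \A_\infty \times \cdots.
\]
(For $x = \0$ this set is all of $X_\A$.) Because each singleton $\{x_i\} \subseteq \A_\infty$ is both open (it is a subset of $\A$) and closed (its complement $\A_\infty \setminus \{x_i\}$ is the complement of a finite subset of $\A$, hence open), the set $Q^{-1}(Z(x))$ is simultaneously a basic open set in the product topology and a product of closed sets. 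Thus $Q^{-1}(Z(x))$ is clopen in $X_\A$. The same reasoning shows $Q^{-1}(Z(xe))$ is clopen for each $e \in F$.

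From here the conclusion is immediate. Since
\[
Q^{-1}(Z(x,F)) \;=\; Q^{-1}(Z(x)) \;\setminus\; \bigcup_{e \in F} Q^{-1}(Z(xe))
\]
is a finite Boolean combination of clopen subsets of $X_\A$, it is itself clopen. Openness of $Z(x,F)$ in $\Sigma_\A$ follows directly from the definition of the quotient topology. For compactness, note that $Q^{-1}(Z(x,F))$, being closed in the compact space $X_\A$, is compact; since $Q$ is continuous and surjects $Q^{-1}(Z(x,F))$ onto $Z(x,F)$, the set $Z(x,F) = Q(Q^{-1}(Z(x,F)))$ is the continuous image of a compact set, hence compact.

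There is no real obstacle here beyond correctly describing $Q^{-1}(Z(x))$ and verifying that singletons $\{x_i\}$ are closed in $\A_\infty$; the rest is bookkeeping with the defining property of the quotient topology. The only minor subtlety worth a sentence in the writeup is the degenerate case $x = \0$, where $Z(\0) = \Sigma_\A$ and $Q^{-1}(Z(\0)) = X_\A$, so the argument still applies.
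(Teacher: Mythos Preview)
Your proof is correct and close in spirit to the paper's, but the argument for compactness is organized differently in a way worth noting. Both proofs establish openness identically, via the quotient topology and the explicit product description of $Q^{-1}(Z(x,F))$. For compactness, however, the paper works inside $\Sigma_\A$: it shows directly that each cylinder $Z(x)$ is \emph{closed} in $\Sigma_\A$ by a small case analysis (given $y \notin Z(x)$, one exhibits a generalized cylinder containing $y$ and disjoint from $Z(x)$), and then deduces that $Z(x,F)$ is closed, hence compact as a closed subset of the compact space $\Sigma_\A$. You instead stay upstairs in $X_\A$: since each $\{x_i\}$ is clopen in $\A_\infty$, the set $Q^{-1}(Z(x,F))$ is clopen in $X_\A$, hence compact, and you push compactness down through the continuous surjection $Q$. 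Your route is a bit slicker---it sidesteps the separation argument entirely---while the paper's route has the side benefit of recording that each $Z(x)$ is closed in $\Sigma_\A$, a fact used again later (e.g.\ in Proposition~\ref{alpha-embedding-prop}).
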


\begin{proof}
Let us first prove that any generalized cylinder set $Z(x,F)$ is open. Write $x = x_1 \ldots x_n$.  Since 
\begin{equation*}
Q^{-1} (Z(x,F)) = \{ x_1 \} \times \ldots \times \{ x_n \} \times (\A_\infty \setminus F) \times \A_\infty \times \A_\infty \times \ldots
\end{equation*} 
is open in $X_\A$, it follows that $Z(x,F)$ is open.

Next we shall show that every cylinder set $Z(x)$ is closed.  Suppose $y \notin Z(x)$ and $l(x)=n$.  Then either $y = x_1 \ldots x_k$ for $k < n$, or $y = x_1 \ldots x_k y_{k+1} \ldots$ with $y_{k+1} \neq x_{k+1}$ for some $k < n$.  In the first case, the generalized cylinder set $Z(x_1 \ldots x_k, \{ x_{k+1} \})$ is an open set with $y \in Z(x_1 \ldots x_k, \{ x_{k+1} \})$ and $Z(x_1 \ldots x_k, \{ x_{k+1} \}) \cap Z(x) = \emptyset$.  In the second case, the cylinder set $Z(x_1 \ldots x_k y_{k+1})$ is open with $y \in Z(x_1 \ldots x_k y_{k+1})$ and $Z(x_1 \ldots x_k y_{k+1}) \cap Z(x) = \emptyset$.  Hence $Z(x)$ is closed.  

Because every cylinder set is clopen, any generalized cylinder set
$$Z(x,F) := Z(x) \setminus \bigcup_{y \in F} Z(xy) = Z(x) \cap \left( \bigcup_{y \in F} Z(xy) \right)^c$$
is an intersection of closed sets and hence a closed set.  Since $Z(x,F)$ is a closed subset of the compact set $\Sigma_\A$, it follows that  $Z(x,F)$ is compact.  Hence any generalized cylinder set  $Z(x,F)$ is compact and open.
\end{proof}

Next we shall exhibit a basis for the topology on $\Sigma_\A$.  To do so, we will find it convenient to embed $\Sigma_\A$ into the space $2^{\Sfin} = \{ 0, 1 \}^{\Sfin}$.  Throughout, we consider $\{ 0, 1 \}^{\Sfin}$ as a topological space with the product topology.

\begin{definition}
We define a function $\alpha : \Sigma_\A \to \{ 0, 1 \}^{\Sfin}$ by $$\alpha (x) (y) = \begin{cases} 1 & \text{ if $x \in Z(y)$} \\ 0 & \text{ otherwise.} \end{cases}$$ 
\end{definition}

\begin{remark}
We may think of $\{ 0, 1 \}^{\Sfin}$ as the space of all subsets of $\Sfin$.  The map $\alpha : \Sigma_\A \to \{ 0, 1 \}^{\Sfin}$ then sends any element $x \in \Sigma_\A$ to the set of all the finite initial subsequences of $x$.
\end{remark}

\begin{definition}
If $F, G \subseteq \Sfin$ are disjoint finite subsets of $\Sfin$, we define a subset $N(F,G) \subseteq \{ 0, 1 \}^{\Sfin}$ by
\begin{equation*}
N(F,G) = \prod_{x \in \Sfin} N(F,G) (x),
\end{equation*}
where
$$N(F,G) (x) := \begin{cases}
\{ 1 \} & \text{ if $x \in F$} \\
\{ 0 \} & \text{ if $x \in G$} \\
\{ 0, 1 \} & \text{ otherwise.} \\
\end{cases}$$
We see that $\{ N(F,G) : \text{$F, G \subseteq \Sfin$ are disjoint finite subsets of $\Sfin$} \}$ is a basis for the topology on $\{ 0, 1 \}^{\Sfin}$.
\end{definition}

\begin{lemma}[cf.~Proposition~2.1.1 of \cite{Webster-thesis} and Theorem~2.1 of \cite{Web}]  \label{alpha-inv-lem}
If $F, G \subseteq \Sfin$ are disjoint finite subsets of $\Sfin$, then $$\alpha^{-1}( N(F,G)) = \left( \bigcap_{x \in F} Z(x) \right) \setminus \left( \bigcup_{y \in G} Z(y) \right).$$
\end{lemma}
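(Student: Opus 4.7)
The plan is to prove the set equality by a direct chain of equivalences: each element of $\Sigma_\A$ lies in the left-hand side if and only if it lies in the right-hand side, with the equivalences obtained by unwinding the definitions of $\alpha$ and of the basic open set $N(F,G)$.

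First I would fix an arbitrary $z \in \Sigma_\A$ and observe that by definition $z \in \alpha^{-1}(N(F,G))$ if and only if $\alpha(z) \in N(F,G)$. Next I would use the description of $N(F,G)$ as the product that forces coordinate $x$ to equal $1$ for $x \in F$, coordinate $y$ to equal $0$ for $y \in G$, and leaves the remaining coordinates unrestricted. This means membership in $N(F,G)$ is equivalent to the two conditions $\alpha(z)(x) = 1$ for every $x \in F$ and $\alpha(z)(y) = 0$ for every $y \in G$.

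Then I would apply the definition of $\alpha$: $\alpha(z)(w) = 1$ precisely when $z \in Z(w)$, and $\alpha(z)(w) = 0$ precisely when $z \notin Z(w)$. Rewriting the two conditions above under this correspondence turns them into $z \in Z(x)$ for every $x \in F$, i.e.\ $z \in \bigcap_{x \in F} Z(x)$, together with $z \notin Z(y)$ for every $y \in G$, i.e.\ $z \notin \bigcup_{y \in G} Z(y)$. Taken together, these say precisely that $z$ belongs to the right-hand side of the claimed equality.

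There is no real obstacle here — the statement is a definitional unpacking and the disjointness of $F$ and $G$ is not even needed for this step (it is only used to guarantee that $N(F,G)$ is nonempty and forms a basic open set of $\{0,1\}^{\Sfin}$). The point of the lemma, which I would emphasize briefly at the end, is that pulling back the standard subbasis for the product topology on $\{0,1\}^{\Sfin}$ along $\alpha$ yields exactly the Boolean combinations of cylinder sets appearing on the right, which will let us use $\alpha$ to transfer topological information between $\Sigma_\A$ and $\{0,1\}^{\Sfin}$ in subsequent results.
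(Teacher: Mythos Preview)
Your proof is correct and follows essentially the same chain of equivalences as the paper's proof, which likewise fixes $z \in \Sigma_\A$ and unwinds the definitions of $\alpha^{-1}$, $N(F,G)$, and $\alpha$ in turn. Your added remark that disjointness of $F$ and $G$ is not actually used in the argument is a valid observation.
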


\begin{proof}
If $z \in \Sigma_\A$, then 
\begin{align*}
z \in \alpha^{-1}( N(F,G)) &\Longleftrightarrow \alpha(z) \in N(F,G) \\
&\Longleftrightarrow \alpha(z)(x) = \begin{cases} 1 & \text{ if $x \in F$} \\ 0 & \text{ if $x \in G$} \end{cases} \\
&\Longleftrightarrow \text{$z \in Z(x)$ for all $x \in F$ and $z \notin Z(y)$ for all $y \in G$} \\
&\Longleftrightarrow z \in  \left( \bigcap_{x \in F} Z(x) \right) \setminus \left( \bigcup_{y \in G} Z(y) \right).
\end{align*}
\end{proof}

\begin{proposition} \label{alpha-embedding-prop}
The function $\alpha : \Sigma_\A \to \{ 0, 1 \}^{\Sfin}$ is an embedding; that is, $\alpha$ is a homeomorphism onto its image.
\end{proposition}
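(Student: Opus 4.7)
The plan is to verify three things: $\alpha$ is injective, $\alpha$ is continuous, and (since the codomain is Hausdorff and the domain compact) deduce automatically that $\alpha$ is a closed map onto its image, hence a homeomorphism onto its image.

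First I would establish injectivity. Suppose $\alpha(x) = \alpha(y)$, so that for every $w \in \Sfin$ one has $x \in Z(w)$ iff $y \in Z(w)$. The set of $w$ for which $\alpha(x)(w) = 1$ consists exactly of the finite initial subsequences of $x$ (i.e.\ $\0, x_1, x_1 x_2, \ldots$). If $l(x) = \infty$, then $y \in Z(x_1 \ldots x_k)$ for every $k$, so $y_i = x_i$ for all $i$; moreover, $\alpha(y)$ must have infinitely many $1$'s, which forces $l(y) = \infty$. If $l(x) = n < \infty$, then $y \in Z(x_1 \ldots x_n)$ forces $y$ to start with $x_1 \ldots x_n$ or extend it, while $y \notin Z(x_1 \ldots x_n a)$ for every $a \in \A$ rules out any extension; hence $y = x_1 \ldots x_n = x$. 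Thus $\alpha$ is injective.

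Next I would check continuity using the basis $\{N(F,G)\}$ described above. By Lemma~\ref{alpha-inv-lem},
\[
\alpha^{-1}(N(F,G)) = \Bigl( \bigcap_{x \in F} Z(x) \Bigr) \setminus \Bigl( \bigcup_{y \in G} Z(y) \Bigr).
\]
By Lemma~\ref{cylinder-compopen-lem} each $Z(x)$ is clopen in $\Sigma_\A$, so the right-hand side is a finite intersection of clopen sets with a finite union of clopen sets removed, hence open. Since the preimage of every basic open set is open, $\alpha$ is continuous.

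Finally, I would invoke the standard fact that a continuous injection from a compact space into a Hausdorff space is a closed embedding. The space $\Sigma_\A$ is compact by the previous proposition, and $\{0,1\}^{\Sfin}$ is Hausdorff as a product of (discrete, hence Hausdorff) spaces. Therefore $\alpha$ is a closed map onto its image, and thus a homeomorphism onto its image. The only step requiring real care is the injectivity argument, and specifically the bookkeeping needed to rule out that a finite sequence $x$ and a strictly longer sequence $y$ (finite or infinite) could share the same set of finite initial subsequences; the cylinder sets of the form $Z(x_1 \ldots x_n a)$ for $a \in \A$ are exactly what prevents this pathology.
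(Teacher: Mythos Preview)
Your proof is correct and follows essentially the same approach as the paper: verify injectivity directly, check continuity via Lemma~\ref{alpha-inv-lem} and the clopenness of cylinder sets from Lemma~\ref{cylinder-compopen-lem}, and then conclude using compactness of $\Sigma_\A$ and Hausdorffness of the codomain. Your injectivity argument is in fact more careful than the paper's, which handles the case distinction between finite and infinite $x$ somewhat implicitly.
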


\begin{proof}
Let us first show that $\alpha$ is injective.  Suppose that $x,y \in \Sigma_\A$ and $\alpha(x) = \alpha(y)$.  Write $x=x_1x_2 \ldots$ and $y=y_1y_2\ldots$.  For every $n$ we have $\alpha(y)(x_1\ldots x_n) = \alpha(x)(x_1 \ldots x_n) = 1$, so that $y \in Z(x_1\ldots x_n)$ and $y_1 \ldots y_n = x_1 \ldots x_n$.  Since this holds for all $n$, we have $x=y$.

Next we shall show that $\alpha$ is continuous.  Since the collection of $N(F,G)$, where $F$ and $G$ range over all disjoint finite subsets of $\Sfin$, forms a basis for $\{ 0, 1 \}^{\Sfin}$, it suffices to show that $\alpha^{-1}(N(F,G))$ is open.  However, Lemma~\ref{alpha-inv-lem} shows that $\alpha^{-1}( N(F,G)) = \left( \bigcap_{x \in F} Z(x) \right) \setminus \left( \bigcup_{y \in G} Z(y) \right)$, and since the cylinder sets are clopen by Lemma~\ref{cylinder-compopen-lem}, it follows that the set $\left( \bigcap_{x \in F} Z(x) \right) \setminus \left( \bigcup_{y \in G} Z(y) \right)$ is open.  Hence $\alpha$ is continuous. 

Because $\Sigma_\A$ is compact, and $\alpha : \Sigma_\A \to \alpha(\Sigma_\A)$ is a continuous bijection, it follows from elementary point-set topology that $\alpha$ is a homeomorphism onto its image.
\end{proof}

Our proof of the following theorem relies on Lemma~\ref{cylinder-compopen-lem} and techniques similar to those used by Webster in the proof of \cite[Proposition~2.1.1]{Webster-thesis} and the proof of \cite[Theorem~2.1]{Web}.

\begin{theorem} \label{basis-for-full-shift-thm}
The collection of generalized cylinder sets
$$\{ Z(x,F) :  \text{ $x \in \Sfin$ and $F \subseteq \A$ is a finite subset} \}$$
is a basis for the topology of $\Sigma_\A$ consisting of compact open subsets.  In addition, if $x \in \Sigma_\A$ and $l(x) = \infty$, then a neighborhood base for $x$ is given by
$$\{ Z(x_1 \ldots x_n) : n \in \N \},$$
and if $x \in \Sigma_\A$ and $l(x) < \infty$, then a neighborhood base for $x$ is given by
$$\{ Z(x,F) : \text{$F$ is a finite subset of $\A$} \}.$$
\end{theorem}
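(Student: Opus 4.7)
The plan is to establish the two neighborhood-base assertions first, since the basis statement then follows immediately: given any open $U \subseteq \Sigma_\A$ and $x \in U$, the appropriate assertion produces a generalized cylinder set $Z(y, F)$ with $x \in Z(y, F) \subseteq U$. Compactness and openness of each $Z(y, F)$ are already provided by Lemma~\ref{cylinder-compopen-lem}. I would handle the two cases directly via the quotient map $Q : X_\A \to \Sigma_\A$ and the product topology on $X_\A$.

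For $x \in \Sigma_\A$ with $l(x) = \infty$, the preimage $Q^{-1}(\{x\})$ is the singleton $\{x\}$ inside $X_\A$. Given an open $U \ni x$, the set $Q^{-1}(U)$ is an open neighborhood of $x$ in the product space $X_\A$, so there exists $n \in \N$ and open sets $V_i \subseteq \A_\infty$ with $x_i \in V_i$ such that $V_1 \times \cdots \times V_n \times \A_\infty \times \A_\infty \times \cdots \subseteq Q^{-1}(U)$. Since $\A$ is discrete we may shrink each $V_i$ to $\{x_i\}$, obtaining $\{x_1\} \times \cdots \times \{x_n\} \times \A_\infty \times \A_\infty \times \cdots \subseteq Q^{-1}(U)$. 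But this preimage is precisely $Q^{-1}(Z(x_1 \ldots x_n))$, so $Z(x_1 \ldots x_n) \subseteq U$.

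For $x \in \Sigma_\A$ with $l(x) = m < \infty$, writing $x = x_1 \ldots x_m$, the preimage
$$Q^{-1}(\{x\}) = \{x_1\} \times \cdots \times \{x_m\} \times \{\infty\} \times \A_\infty \times \A_\infty \times \cdots$$
is a closed, hence compact, subset of $X_\A$ contained in the open set $Q^{-1}(U)$. I would argue by contradiction: suppose no finite $F \subseteq \A$ satisfied $Z(x, F) \subseteq U$. Then each of the closed sets
$$K_F := \bigl(\{x_1\} \times \cdots \times \{x_m\} \times (\A_\infty \setminus F) \times \A_\infty \times \cdots\bigr) \cap \bigl(X_\A \setminus Q^{-1}(U)\bigr)$$
would be nonempty, and since $K_{F_1} \cap K_{F_2} = K_{F_1 \cup F_2}$ the family $\{K_F\}$ would have the finite intersection property. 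Compactness of $X_\A$ would then yield a point $p \in \bigcap_F K_F$; but the $(m+1)$-st coordinate of $p$ would lie in $\A_\infty \setminus F$ for every finite $F \subseteq \A$, forcing it to equal $\infty$, and hence placing $p$ inside $Q^{-1}(\{x\}) \subseteq Q^{-1}(U)$, contrary to $p \notin Q^{-1}(U)$.

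The main obstacle is the finite-length case: the generalized cylinder sets only allow subtracting single-letter extensions $Z(xe)$, so one cannot simply read off $F$ from a basic open neighborhood in the product topology (whose later coordinates could impose further restrictions). The compactness-plus-finite-intersection-property argument above is precisely what lets us absorb the entire tail $\A_\infty \times \A_\infty \times \cdots$ into a single finite $F \subseteq \A$, and it is the step in which the one-point compactification of $\A$ is genuinely essential.
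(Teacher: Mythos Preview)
Your proof is correct and takes a genuinely different route from the paper's. The paper proceeds by first establishing an embedding $\alpha : \Sigma_\A \hookrightarrow \{0,1\}^{\Sfin}$ (Proposition~\ref{alpha-embedding-prop}), pulling back the standard sub-basic open sets $N(F,G)$ of the product $\{0,1\}^{\Sfin}$, simplifying each $\alpha^{-1}(N(F,G))$ to the form $Z(w) \setminus \bigcup_{u \in G'} Z(wu)$, and then, for each point $z$ in such a set, explicitly constructing a generalized cylinder $Z(x,F')$ with $z \in Z(x,F') \subseteq \alpha^{-1}(N(F,G))$, treating the cases $l(z) = \infty$ and $l(z) < \infty$ separately.

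Your argument instead stays entirely inside the defining quotient $Q : X_\A \to \Sigma_\A$ and never invokes $\alpha$. The infinite-length case is a one-line read-off from the product-topology basis; the finite-length case is handled by a clean compactness/finite-intersection-property argument that pins the $(m+1)$st coordinate of a limit point to $\infty$. This is more elementary and self-contained for the theorem at hand, and it makes the role of the one-point compactification transparent. The paper's detour through $\{0,1\}^{\Sfin}$ buys something elsewhere---the embedding $\alpha$ is reused in Section~\ref{metrics-sec} to pull back an explicit metric---so its approach is not wasted, but for the basis statement alone your route is shorter. One minor point worth making explicit in a write-up: the equivalence $Z(x,F) \subseteq U \iff Q^{-1}(Z(x,F)) \subseteq Q^{-1}(U)$ uses surjectivity of $Q$, and the case $l(x) = 0$ (i.e., $x = \vec 0$) should be noted to fit the same template with $m = 0$.
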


\begin{proof}
It follows from Lemma~\ref{cylinder-compopen-lem} that the generalized cylinder sets $Z(x,F)$ are compact open subsets of $\Sigma_\A$.  In addition, Proposition~\ref{alpha-embedding-prop} shows that $\alpha : \Sigma_\A \to \{ 0, 1 \}^{\Sfin}$ is an embedding.  Since $$\{ N(F,G) : \text{$F, G \subseteq \Sfin$ are disjoint finite subsets of $\Sfin$} \}$$ is a basis for the topology on $\{ 0, 1 \}^{\Sfin}$, it follows that $$\{ \alpha^{-1}(N(F,G)) : \text{$F, G \subseteq \Sfin$ are disjoint finite subsets of $\Sfin$} \}$$ is a basis for the topology on $\Sigma_\A$.  Thus it suffices to show that for any $N(F,G)$ and any $z \in \alpha^{-1}(N(F,G))$ there exists a generalized cylinder set $Z(x,F')$ such that $z \in Z(x,F') \subseteq \alpha^{-1}(N(F,G))$.  We shall accomplish this in a few steps.

First, we shall show $\alpha^{-1}(N(F,G))$ can be written in a nicer form than that shown in Lemma~\ref{alpha-inv-lem}.  Given disjoint finite subsets $F,G \subseteq \Sfin$, we have 
$$\alpha^{-1}( N(F,G)) = \left( \bigcap_{x \in F} Z(x) \right) \setminus \left( \bigcup_{y \in G} Z(y) \right).$$
If $\alpha^{-1}( N(F,G)) \neq \emptyset$, then $\bigcap_{x \in F} Z(x) \neq \emptyset$.  It follows from \eqref{cylinder-intersect-eq} that $\bigcap_{x \in F} Z(x) = Z(w)$ for some $w \in F$.  Thus 
$$\alpha^{-1}( N(F,G)) = Z(w) \setminus \left( \bigcup_{y \in G} Z(y) \right) = Z(w) \setminus \left( \bigcup_{y \in G \cap Z(w)} Z(y) \right).$$
In addition, if we let $G' := \{ u \in \Sfin : wu \in G \cap Z(w)\}$, then $\bigcup_{y \in G \cap Z(w)} Z(y) = \bigcup_{u \in G'} Z(wu)$ so that 
$$\alpha^{-1}( N(F,G)) = Z(w) \setminus \left( \bigcup_{u \in G'} Z(wu) \right).$$

Next, let $z \in Z(w) \setminus \left( \bigcup_{u \in G'} Z(wu) \right)$.  We wish to find $x \in \Sfin$ and a finite subset $F' \subseteq \A$ such that $z \in Z(x,F') \subseteq Z(w) \setminus \left( \bigcup_{u \in G'} Z(wu) \right)$.  We consider two cases: $l(z) = \infty$ and $l(z) < \infty$.

If $l(z) = \infty$, let $N = \max \{ l(w u) : u \in G' \}$ if $G' \neq \emptyset$ or $N = l(w)$ if $G' = \emptyset$.  Define $x := z_1 \ldots z_N$ and $F' := \emptyset$.  Then $z \in Z(x, F')$, and since any element in $Z(x,F')$ has $x$, and hence also $w$, as its initial segment, we have 
$Z(x,F') \subseteq Z(w)$.  Furthermore, any element of $Z(x,F')$ has $x = z_1 \ldots z_N$ as an initial segment, and since $N \geq l(wu)$ for all $u \in G'$, and $z \notin Z(wu)$, this element does not have $wu$ as an initial segment.  Thus $Z(x,F') \subseteq  \left(  \bigcup_{u \in G'} Z(wu) \right)^c$.  It follows that
$$z \in Z(x,F') \subseteq Z(w) \setminus \left( \bigcup_{u \in G'} Z(wu) \right)$$
as desired.  This also shows that $\{ Z(z_1 \ldots z_n) : n \in \N \}$ is a neighborhood base of $z$. 

If $l(z) < \infty$, let $x := z$ and $F' := \{ (w u)_{l(z)+1} : u \in G' \text{ and } l(wu) > l(z) \}$.  Then $z \in Z(x,F')$ since $x=z$.  To see that $Z(x,F') \subseteq Z(w) \setminus \left( \bigcup_{u \in G'} Z(wu) \right)$, fix $\alpha \in Z(x,F')$.  Write $z = w z'$ for $z' \in \Sfin$, and $\alpha = x \alpha'$ for $\alpha' \in \Sigma_\A$.  Then $\alpha = x\alpha' = z\alpha'=wz'\alpha' \in Z(w)$.  Also, fix $u \in G'$.  If $l(wu) \leq l(z)$, then $l(u) \leq l(z')$, and since $z' \notin Z(u)$, we have $z' \alpha' \notin Z(u)$, and $w z' \alpha' \notin Z(wu)$, and $\alpha \notin Z(wu)$.  On the other hand, if $l(wu) > l(z)$, then since $\alpha \in Z(x,F')$ we have $\alpha_{1}' \notin F'$, and $\alpha_{l(z)+1} = (wz'\alpha')_{l(z)+1} = (z\alpha')_{l(z)+1} = \alpha'_1 \neq (wu)_{l(z)+1}$.  Hence $\alpha \notin Z(wu)$.   It follows that 
 $Z(x,F') \subseteq \left( \bigcup_{u \in G'} Z(wu) \right)^c$.  Thus
$$z \in Z(x,F') \subseteq Z(w) \setminus \left( \bigcup_{u \in G'} Z(wu) \right)$$
as desired.  This also shows that $\{ Z(z,F) : \text{$F$ is a finite subset of $\A$} \}$ is a neighborhood base of $z$.
\end{proof}

\begin{remark}
A basis for a similar topology was described in \cite[Corollary~2.4]{PW}, however, as pointed out in \cite[p.12]{Webster-thesis} there is a minor oversight in \cite[Corollary~2.4]{PW} and it fails to include some of the necessary basis elements.
\end{remark}

\begin{corollary} \label{convergence-in-full-shift-cor}
Let $\{ x^n \}_{n=1}^\infty$ be a sequence of elements in $\Sigma_\A$ and write $x^n = x^n_1 x^n_2 \ldots$ with $x^n_i \in \A$ for all $i \in \N$.  Also let $x = x_1 x_2 \ldots \in \Sigma_\A$.
\begin{itemize}
\item[(a)] If $l(x) = \infty$, then  $\lim_{n \to \infty} x^n = x$ with respect to the topology on $\Sigma_\A$ if and only if for every $M \in \N$ there exists $N \in \N$ such that $n > N$ implies $x^n_i = x_i$ for all $1 \leq i \leq M$.
 \item[(b)] If $l(x) < \infty$, then $\lim_{n \to \infty} x^n = x$ with respect to the topology on $\Sigma_\A$ if and only if for every finite subset $F \subseteq \A$ there exists $N \in \N$ such that $n > N$ implies $l(x^n) \geq l(x)$, $x^n_{l(x)+1} \notin F$, and $x_i^n = x_i$ for all $1 \leq i \leq l(x)$. (Note: If $l(x^n) = l(x)$ we consider the condition $x^n_{l(x)+1} \notin F$ to be vacuously satisfied.)
\end{itemize}
\end{corollary}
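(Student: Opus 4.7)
The plan is to directly apply Theorem~\ref{basis-for-full-shift-thm}, which provides explicit neighborhood bases at every point of $\Sigma_\A$. Since convergence $\lim_{n\to\infty} x^n = x$ is equivalent to the statement that $x^n$ eventually lies in every basic neighborhood of $x$, the corollary reduces to unpacking what membership in each basic neighborhood means. I would prove each direction simultaneously in each case, since both are immediate once the correct neighborhood base is in hand.

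For part (a), when $l(x) = \infty$, Theorem~\ref{basis-for-full-shift-thm} gives the neighborhood base $\{ Z(x_1 \ldots x_M) : M \in \N \}$. Thus $x^n \to x$ is equivalent to: for every $M \in \N$ there exists $N \in \N$ such that $n > N$ implies $x^n \in Z(x_1 \ldots x_M)$. By the definition of the cylinder set, $x^n \in Z(x_1 \ldots x_M)$ means precisely that $l(x^n) \geq M$ and $x^n_i = x_i$ for $1 \leq i \leq M$, which is the stated condition.

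For part (b), when $l(x) < \infty$, Theorem~\ref{basis-for-full-shift-thm} gives the neighborhood base $\{ Z(x,F) : F \subseteq \A \text{ a finite subset} \}$. Here I would expand the definition $Z(x,F) = Z(x) \setminus \bigcup_{e \in F} Z(xe)$ to see that $y \in Z(x,F)$ if and only if $y$ begins with $x$ (that is, $l(y) \geq l(x)$ and $y_i = x_i$ for $1 \leq i \leq l(x)$) and $y$ does not begin with $xe$ for any $e \in F$. The latter condition splits into two subcases: either $l(y) = l(x)$ (so $y = x$ and no $Z(xe)$ can contain it), or $l(y) > l(x)$ and $y_{l(x)+1} \notin F$. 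Translating ``for every finite $F$, eventually $x^n \in Z(x,F)$'' into these explicit conditions yields exactly the statement of (b), with the parenthetical remark about vacuous satisfaction accounting for the $l(x^n) = l(x)$ subcase.

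I do not anticipate any serious obstacle: the corollary is essentially just unpacking the neighborhood bases furnished by Theorem~\ref{basis-for-full-shift-thm}. The only point that requires mild care is the edge case in (b) where $l(x^n) = l(x)$, which is automatically in $Z(x,F)$ for any $F$ and is the reason for the parenthetical clarification in the statement.
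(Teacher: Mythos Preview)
Your proposal is correct and follows exactly the same approach as the paper, which simply states that the result follows from the neighborhood bases described in Theorem~\ref{basis-for-full-shift-thm}. You have merely supplied the details that the paper leaves implicit.
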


\begin{proof}
This follows from the description of the neighborhood bases of points described in Theorem~\ref{basis-for-full-shift-thm}.
\end{proof}

\begin{corollary} \label{countability-cor}
The following are equivalent:
\begin{itemize}
\item[(i)] The set $\A$ is countable.
\item[(ii)] The space $\Sigma_\A$ is second countable.
\item[(iii)] The space $\Sigma_\A$ is first countable.
\end{itemize}
\end{corollary}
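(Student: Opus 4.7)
The plan is to establish the cycle \textup{(i)} $\Longrightarrow$ \textup{(ii)} $\Longrightarrow$ \textup{(iii)} $\Longrightarrow$ \textup{(i)}. The implication \textup{(ii)} $\Longrightarrow$ \textup{(iii)} is immediate from the definitions, so only the first and last implications require argument, and both use Theorem~\ref{basis-for-full-shift-thm} in an essential way.

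For \textup{(i)} $\Longrightarrow$ \textup{(ii)}, suppose $\A$ is countable. Then $\Sfin = \bigcup_{k=0}^\infty \A^k$ is a countable union of countable sets, hence countable, and the collection of finite subsets of $\A$ is countable as well. Consequently the collection
$$\{ Z(x,F) : x \in \Sfin,\ F \subseteq \A \text{ finite}\}$$
is countable. By Theorem~\ref{basis-for-full-shift-thm} this collection is a basis for the topology on $\Sigma_\A$, so $\Sigma_\A$ is second countable.

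For \textup{(iii)} $\Longrightarrow$ \textup{(i)} I will work at the point $\0 \in \Sigma_\A$, whose neighborhood base according to Theorem~\ref{basis-for-full-shift-thm} is $\{ Z(\0,F) : F \subseteq \A \text{ finite}\}$. If $\Sigma_\A$ is first countable, then $\0$ admits a countable neighborhood base, which (after intersecting with basis elements) may be taken to be of the form $\{Z(\0,F_n)\}_{n=1}^\infty$ with each $F_n$ a finite subset of $\A$. Set $A := \bigcup_{n=1}^\infty F_n$, a countable subset of $\A$. The key observation is that for any $a \in \A$, the length-$1$ sequence $a$ lies in $Z(\0,F_n)$ precisely when $a \notin F_n$, because $Z(\0,F_n) = \Sigma_\A \setminus \bigcup_{e\in F_n} Z(e)$ and the length-$1$ sequence $a$ belongs to $Z(e)$ only when $e=a$.

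The main step is to derive a contradiction assuming $\A$ is uncountable. In that case there is some $a \in \A \setminus A$. Since $Z(\0,\{a\})$ is a neighborhood of $\0$, there must be some $n$ with $Z(\0,F_n) \subseteq Z(\0,\{a\})$. But $a \notin F_n$, so by the observation above the length-$1$ sequence $a$ lies in $Z(\0,F_n)$, whereas $a \in Z(a)$ forces $a \notin Z(\0,\{a\})$, a contradiction. Hence $\A$ must be countable. I expect the main obstacle to be bookkeeping: carefully distinguishing the letter $a \in \A$ from the length-$1$ sequence $a \in \Sigma_\A$ when computing membership in the generalized cylinder sets around $\0$, and justifying the refinement of an arbitrary countable neighborhood base to one consisting of basis elements of the form $Z(\0,F_n)$.
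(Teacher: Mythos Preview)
Your proof is correct and follows essentially the same approach as the paper: both refine a countable neighborhood base at a finite-length point to generalized cylinders $Z(x,F_i)$ and conclude that $\bigcup_i F_i = \A$. The only cosmetic difference is that the paper invokes the Hausdorff property to deduce $\bigcap_i Z(x,F_i) = \{x\}$ and hence $\bigcup_i F_i = \A$, whereas you reach the same conclusion by the direct contradiction with the neighborhood $Z(\0,\{a\})$.
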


\begin{proof}
If (i) holds, then $\Sfin$ is countable and the collection of finite subsets of $\A$ is countable, and hence the collection $$\{ Z(x,F) :  \text{ $x \in \Sfin$ and $F \subseteq \A$ is a finite subset} \}$$ of generalized cylinder sets is countable.  Thus $\Sigma_\A$ is second countable and (ii) holds.  We have (ii) implies (iii) trivially.

If (iii) holds, choose $x \in \Sfin$ and choose a countable neighborhood base $\{ U_i \}_{i \in \N}$ for $x$.  For each $i \in \N$ choose a finite subset $F_i \subseteq \A$ such that $Z(x,F_i) \subseteq U_i$.  Since $\Sigma_\A$ is Hausdorff, we have $\bigcap_{i=1}^\infty U_i = \{ x \}$.  Thus, $x \in \bigcap_{i=1}^\infty Z(x, F_i) \subseteq \bigcap_{i=1}^\infty U_i = \{ x \}$, so that $\bigcap_{i=1}^\infty Z(x, F_i) = \{ x \}$ and $\bigcup_{i=1}^\infty F_i = \A$.  Since $\A$ is the countable union of finite sets, $\A$ is countable and (i) holds.
\end{proof}

\begin{remark}
When $\A$ is countable, Corollary~\ref{countability-cor} shows that $\Sigma_\A$ is first (and second) countable.  In this case, sequences suffice to determine the topology, and all topological information can be obtained using the sequential convergence described in Corollary~\ref{convergence-in-full-shift-cor}.
\end{remark}

\begin{remark}
Even though Corollary~\ref{countability-cor} shows that the space $\Sigma_\A$ is first countable if and only if the alphabet $\A$ is countable, for any $\A$ and any $x \in \Sinf$ the collection $\{Z(x_1 \ldots x_n) : n \in \N \}$ is a countable neighborhood base of $x$.
\end{remark}

\subsection{A Family of Metrics on $\Sigma_\A$ when $\A$ is Countable} \label{metrics-sec}

When $\A$ is countable, Corollary~\ref{countability-cor} shows that $\Sigma_\A$ is a second countable compact Hausdorff space and hence is metrizable.  Assuming $\A$ is countable, we describe a family of metrics on $\Sigma_{\A}$ that induce the topology.  We do so by embedding $\Sigma_\A$ into a metric space and then using the embedding to ``pull back" the metric to a metric on $\Sigma_\A$.

\begin{example}
We use the embedding $\alpha : \Sigma_\A \to \{ 0, 1 \}^{\Sfin}$ described in Proposition~\ref{alpha-embedding-prop}.  If $\A$ is countable, then the set of finite sequences $\Sfin$ is countable.  Thus we may list the elements of $\Sfin$ as $\Sfin = \{ p_1, p_2, p_3, \ldots \}$, order $\{ 0, 1 \}^{\Sfin}$ as $$\{ 0, 1 \}^{\Sfin} = \{0, 1 \}_{p_1} \times \{0,1\}_{p_2} \times \ldots,$$
and define a metric $d_\textrm{fin}$ on $\{ 0, 1 \}^{\Sfin}$ by 
$$d_\textrm{fin} (\mu, \nu) := \begin{cases} 1/2^i & \text{$i \in \N$ is the smallest value such that $\mu(i) \neq \nu(i)$} \\
0 & \text{if $\mu(i) = \nu (i)$ for all $i \in \N$}.
\end{cases}$$
The metric $d_\textrm{fin}$ induces the product topology on $\{ 0, 1 \}^{\Sfin}$, and hence the topology on $\Sigma_\A$ is induced by the metric $d_\A$ on $\Sigma_\A$ defined by $d_\A (x,y) := d_\textrm{fin}( \alpha(x), \alpha(y))$.  Note that for $x,y \in \Sigma_\A$, we have
$$d_\A (x,y) := \begin{cases} 1/2^i & \text{$i \in \N$ is the smallest value such that $p_i$ is an initial} \\  & \text{ \ \ \ subsequence of one of $x$ or $y$ but not the other} \\
0 & \text{if $x=y$}.
\end{cases}$$ 
The metric $d_\A$ depends on the order we choose for $\Sfin = \{ p_1, p_2, p_3, \ldots \}$.
\end{example}

\subsection{The Shift Map}  \label{shift-map-subsec}

We next consider the ``shift map" on $\Sigma_\A$.

\begin{definition} \label{shift-map-def}
The \emph{shift map} is the function $\sigma : \Sigma_\A \to \Sigma_\A$ defined by $$\sigma(x) =  \begin{cases} x_2 x_3 \ldots & \text{ if $x = x_1 x_2 \ldots \in \A^\N$} \\ x_2 \ldots x_n & \text{ if $x = x_1 \ldots x_n \in \bigcup_{k=2}^\infty \A^k$} \\ \0 & \text{ if $x \in \A^1 \cup \{ \0 \}$.} \end{cases}$$
\end{definition}

Note that if $l(x) = \infty$, then $l (\sigma(x)) = \infty$, if $l(x) \in \N$, then $l(\sigma(x)) = l(x)-1$, and if $l(x) = 0$, then $l(\sigma(x)) = 0$.  Also note that if $x \in \Sigma_\A \setminus \{ \0 \}$, then $\sigma (x)_i = x_{i+1}$ for $1 \leq i < l(x)$.

\begin{proposition} \label{shift-cts-all-but-empty-word}
Let $\A$ be an infinite alphabet.  The shift map $\sigma :  \Sigma_\A \to \Sigma_\A$ is continuous at all points in $\Sigma_\A \setminus \{ \0 \}$ and discontinuous at the point $\0$.  In addition, if $x \in \Sigma_\A \setminus \{ 0 \}$, then there exists an open set $U \subseteq \Sigma_\A \setminus \{ \0 \}$ such that $x \in U$, $\sigma(U)$ is an open subset of $\Sigma_\A$, and $\sigma|_U : U \to \sigma(U)$ is a homeomorphism.
\end{proposition}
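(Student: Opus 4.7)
The plan is to address the three claims in turn, using the neighborhood bases of points described in Theorem~\ref{basis-for-full-shift-thm}.

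For continuity at a point $x \in \Sigma_\A \setminus \{\0\}$, I would fix a basic neighborhood of $\sigma(x)$ and exhibit a basic neighborhood of $x$ that $\sigma$ sends into it. When $l(x) = \infty$, a base at $\sigma(x) = x_2 x_3 \ldots$ is $\{Z(x_2 \ldots x_{n+1}) : n \in \N\}$, and $\sigma$ clearly sends the neighborhood $Z(x_1 \ldots x_{n+1})$ of $x$ into $Z(x_2 \ldots x_{n+1})$. When $1 \leq l(x) < \infty$, a basic neighborhood of $\sigma(x)$ has the form $Z(\sigma(x), F)$ for a finite $F \subseteq \A$, and I would verify that $\sigma(Z(x, F)) \subseteq Z(\sigma(x), F)$ by a direct check on the two kinds of elements of $Z(x, F)$: the sequence $x$ itself, which shifts to $\sigma(x) \in Z(\sigma(x), F)$, and the proper extensions $x_1 \ldots x_{l(x)} e z$ with $e \in \A \setminus F$, which shift to $x_2 \ldots x_{l(x)} e z \in Z(\sigma(x), F)$.

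For discontinuity at $\0$, I would fix any $a \in \A$ and use the neighborhood $V := Z(\0, \{a\}) = \Sigma_\A \setminus Z(a)$ of $\sigma(\0) = \0$. Given any basic neighborhood $Z(\0, F)$ of $\0$, the infiniteness of $\A$ provides some $b \in \A \setminus F$; then the length-two sequence $ba$ lies in $Z(\0, F)$, while $\sigma(ba) = a$ lies in $Z(a)$ and hence outside $V$. Hence no neighborhood of $\0$ is carried into $V$, and $\sigma$ fails to be continuous at $\0$.

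For the local homeomorphism claim, given $x \in \Sigma_\A \setminus \{\0\}$ I would take $U := Z(x_1)$, which by Lemma~\ref{cylinder-compopen-lem} is a clopen neighborhood of $x$, and which avoids $\0$ because every element of $Z(x_1)$ begins with $x_1$. The map $\sigma|_U$ is a bijection onto the open set $\sigma(U) = \Sigma_\A$ with two-sided inverse the concatenation map $h : \Sigma_\A \to U$ given by $h(y) := x_1 y$, and continuity of $\sigma|_U$ has already been established in the first part. The remaining step, which I view as the main obstacle, is verifying continuity of $h$. I would pull back a basic open set $Z(w, F) \cap Z(x_1)$ of $U$: when $w = \0$, the preimage is $\Sigma_\A$ if $x_1 \notin F$ and $\emptyset$ if $x_1 \in F$; when $l(w) \geq 1$ and $w_1 \neq x_1$, the preimage is empty; and when $w_1 = x_1$, the preimage works out to $Z(w_2 \ldots w_{l(w)}, F)$, with the convention that $w_2 \ldots w_{l(w)} = \0$ when $l(w) = 1$. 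Each case yields an open set, so $h$ is continuous and the proof is complete.
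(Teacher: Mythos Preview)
Your proof is correct and, like the paper's, rests on the generalized cylinder set basis of Theorem~\ref{basis-for-full-shift-thm}; however, several tactical choices differ. For continuity at $x \neq \0$, the paper treats both length cases at once: writing $x = a\sigma(x)$ for some $a \in \A$, it takes any basic neighborhood $Z(y,F)$ of $\sigma(x)$ and observes that $U := Z(ay,F)$ is a neighborhood of $x$ with $\sigma(U) = Z(y,F)$. For the local homeomorphism, the paper then reuses this same $U$ and invokes compactness: since $Z(ay,F)$ is compact (Lemma~\ref{cylinder-compopen-lem}) and $\sigma|_U$ is a continuous bijection onto the open set $Z(y,F)$ in the Hausdorff space $\Sigma_\A$, it is automatically a homeomorphism---no explicit check of inverse continuity is needed. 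Your choice $U = Z(x_1)$ with $\sigma(U) = \Sigma_\A$ is equally valid, and your direct verification that $y \mapsto x_1 y$ is continuous is clean, but note that you could have shortcut it via the same compactness argument since $Z(x_1)$ is compact. Finally, for discontinuity at $\0$ the paper argues with sequences (taking $x^n := a_n a_1 a_1 \ldots$ with distinct $a_n$, so $x^n \to \0$ while $\sigma(x^n) \to a_1 a_1 \ldots \neq \0$), whereas your direct neighborhood argument is arguably more robust since it does not implicitly lean on sequential notions.
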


\begin{proof}
Let $x \in \Sigma_\A \setminus \{ \0 \}$, and let $V \subseteq \Sigma_\A$ be an open set with $\sigma(x) \in V$.  Since $x \neq \0$, there exists $a \in \A$ such that $x = a \sigma(x)$.  By Theorem~\ref{basis-for-full-shift-thm} there exists a compact open neighborhood $Z(y, F)$ of $\Sigma_\A$ with $\sigma(x) \in Z(y,F) \subseteq V$.  If we let $U := Z(ay, F)$, then $U$ is an compact open subset of $\Sigma_\A$, $x \in U$, and $\sigma(U) = Z(y,F) \subseteq V$.  Hence $\sigma$ is continuous at $x$.  

In addition, since $\sigma|_U : U \to \sigma(U)$ is bijective with inverse $z \mapsto az$, we see that  $\sigma|_U : U \to \sigma (U)$ is a continuous bijection from the compact open set $U$ onto the open set $\sigma(U)$, and hence $\sigma|_U : U \to \sigma (U)$ is a homeomorphism. 

To see that $\sigma$ is discontinuous at $\0$, choose a sequence of distinct elements $a_1, a_2, \ldots \in \A$.  For each $n \in \N$, define a sequence $\{ x^n \}_{n=1}^\infty$ defined by $x^n := a_n a_1 a_1 a_1 \ldots$.  Then $\lim_{n \to \infty} x^n = \0$, and we see $\sigma( \lim_{n \to \infty} x^n) = \sigma (\0) = \0$, while $\lim_{n \to \infty} \sigma (x^n) = \lim_{n \to \infty} a_1 a_1 \ldots = a_1 a_1 \ldots$.  Hence $\sigma( \lim_{n \to \infty} x^n) \neq \lim_{n \to \infty} \sigma (x^n)$, and $\sigma$ is not continuous at $\0$.
\end{proof}

\begin{remark}
Recall that in the case of a finite alphabet, the full shift $\Sigma_\A$ consists of infinite sequences of letters from $\A$, and in particular $\Sigma_\A$ does not contain the empty sequence $\0$, and the shift map $\sigma : \Sigma_\A \to \Sigma_\A$ is continuous at all points.  When $\A$ is infinite, $\Sigma_\A$ contains the empty sequence $\0$, and Proposition~\ref{shift-cts-all-but-empty-word} shows that the shift map $\sigma : \Sigma_\A \to \Sigma_\A$ has a single discontinuity at $\0$.  This lack of continuity will not cause us any difficulty, since nothing we do in the sequel will require continuity of the shift map.
\end{remark}

The results of this section allow us to make the following definition.

\begin{definition}
If $\A$ is an infinite alphabet, we define the \emph{one-sided full shift} to be the pair $(\Sigma_\A, \sigma)$ where $\Sigma_\A$ is the topological space from Definition~\ref{Sigma-X-def} and $\sigma : \Sigma_\A \to \Sigma_\A$ is the map from Definition~\ref{shift-map-def}.  When it is clear from context that we are discussing one-sided shifts, we shall often refer to $(\Sigma_\A, \sigma)$ as simply the \emph{full shift} on the alphabet $\A$.  In addition, as in the classical case we engage in some standard sloppiness and often refer to the space $\Sigma_\A$ as the \emph{full shift} with the understanding that the map $\sigma$ is attached to it.
\end{definition}

\begin{remark}
We assumed throughout this past section that $\A$ is infinite, but when $\A$ is finite we can repeat our construction.  
In this case $\A$ with the discrete topology is compact, and the minimal compactification of $\A$ is $\A$ itself, so that $\A_\infty = \A$.  We perform our construction as above, and all statements about the element $\infty$ are then vacuous.  Thus $X_\A = \A \times \A \times \A \times \ldots = \A^\N$ with the product topology, and the quotient map $Q : X_\A \to \A^\N \cup \bigcup_{k=0}^\infty \A^k$ is the inclusion map $\A^\N \hookrightarrow \A^\N \cup \bigcup_{k=0}^\infty \A^k$.  Thus the image of $Q$ is simply the space $\A^\N$ and the quotient topology induced by $Q$ is the product topology on $\A^\N$.  Hence when $\A$ is finite, we recover the usual definition of the full shift as $\Sigma_\A := \A^\N$ with the product topology, and every sequence in the full shift has infinite length.  We also observe that in this case the collection of cylinder sets $$\{ Z(x_1\ldots x_n) : \text{$n \in \N$ and $x_i \in \A$ for $1 \leq i \leq n$} \}$$ forms a basis for the topology on $\Sigma_\A$.
\end{remark}

\section{Shift Spaces over Infinite Alphabets} \label{shift-spaces-sec}

Having defined the full shift over an arbitrary alphabet in the previous section, we now use it to define shift spaces as subspaces of the full shift having certain properties.  In addition to requiring a shift space to be closed and invariant under the shift map, we will also require that it satisfies what we call the ``infinite-extension property".

\begin{definition} \label{inf-ext-def}
If $\A$ is an alphabet and $X \subseteq \Sigma_\A$, we say $X$ has the \emph{infinite-extension property} if for all $x \in X$ with $l(x) < \infty$, there are infinitely many $a \in \A$ such that $Z(xa) \cap X \neq \emptyset$.
\end{definition}

\begin{remark}
Note that $X$ has the infinite-extension property if and only if whenever $x \in X$ and $l(x) < \infty$, then the set $$\{ a \in \A : x a y \in X \text{ for some $y \in \Sigma_\A$} \}$$
is infinite.
\end{remark}

\begin{definition} \label{shift-space-def}
Let $\A$ be an alphabet, and $(\Sigma_\A, \sigma)$ be the full shift over $\A$.  A \emph{shift space} over $\A$ is defined to be a subset $X \subseteq \Sigma_\A$ satisfying the following three properties:
\begin{itemize}
\item[(i)] $X$ is a closed subset of $\Sigma_\A$.
\item[(ii)] $\sigma (X) \subseteq X$.
\item[(iii)] $X$ has the infinite-extension property.
\end{itemize}
For any shift space $X$ we define $\Xinf := X \cap \Sinf$ and $\Xfin := X \cap \Sfin$.
\end{definition}

\begin{remark}
Since $\Sigma_\A$ is compact, Property~(i) implies that any shift space is compact.  In addition, Property~(ii) implies that $\sigma : \Sigma_\A \to \Sigma_\A$ restricts to a map $\sigma|_X : X \to X$.  Thus we will often attach the map $\sigma|_X$ to $X$ and refer to the pair $(X, \sigma|_X)$ as a \emph{shift space}.   Note that our definition allows the empty set $X = \emptyset$ as a shift space.  However, Property~(iii) shows that if $X \neq \emptyset$, then $\Xinf \neq \emptyset$, so that nonempty shift spaces will always have sequences of infinite length (see Proposition~\ref{inf-ext-in-shift-prop}).
\end{remark}

\begin{remark} \label{recover-classical-rem}
If $\A$ is finite, then $\Sigma_\A$ contains no finite sequences and any subset of $\Sigma_\A$ vacuously satisfies the infinite-extension property.  Consequently, when $\A$ is finite a subset $X \subseteq \Sinf$ is a shift space if and only  if $X$ is closed and $\sigma (X) \subseteq X$.  Thus when $\A$ is finite we recover the ``classical theory" of shift spaces.  We also observe that if $X$ is a shift space over a finite alphabet, then $\Xinf = X$ and $\Xfin = \emptyset$.
\end{remark}

\begin{remark}
Any shift space is a topological space with the subspace topology generated by the basis elements
$$Z_X(\alpha, F) := Z(\alpha, F) \cap X = \{ \alpha \beta : \alpha \beta \in X \text{ and } \beta_1 \notin F \}$$
for all $\alpha \in \Sfin$ and all finite subsets $F \subseteq \A$.  When we are working with a given shift $X$, we shall often omit the subscript $X$ and simply write $Z(\alpha, F)$ for the intersection of the generalized cylinder set with $X$.
\end{remark}

The following proposition shows that the infinite-extension property implies that a finite sequence in a shift space may be extended to an infinite sequence in the shift space with infinitely many choices of the first symbol.

\begin{proposition} \label{inf-ext-in-shift-prop}
If $X$ is a shift space and $x \in \Xfin$, then there exists $y \in \Sinf$ such that $xy \in X$.  Moreover, if $F$ is a finite subset of $\A$, then $y$ may be chosen so that $y_1 \notin F$.
\end{proposition}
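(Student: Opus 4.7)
The plan is to build an infinite extension one symbol at a time, using the infinite-extension property at each stage, and then close up the construction with either an immediate termination (if some stage already produces an infinite word) or a topological limit argument (using that $X$ is closed).

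More concretely, I would first handle the initial step separately, because of the constraint on the first letter. Since $x\in\Xfin$ and $X$ has the infinite-extension property, the set $\{a\in\A : Z(xa)\cap X \neq \emptyset\}$ is infinite; because $F$ is finite, I may choose $a_1 \in \A\setminus F$ with $Z(xa_1)\cap X\neq\emptyset$, and then pick some $z^1\in Z(xa_1)\cap X$. If $l(z^1)=\infty$, I am already done: write $z^1 = xa_1 c$ with $c\in\Sinf$ and set $y=a_1 c$. Otherwise $z^1\in\Xfin$, and I iterate: given $z^{n-1}\in\Xfin$ extending $xa_1$, apply the infinite-extension property to $z^{n-1}$ to get some $b_n\in\A$ with $Z(z^{n-1}b_n)\cap X\neq\emptyset$, and pick $z^n\in Z(z^{n-1}b_n)\cap X$. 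By construction each $z^n$ starts with $z^{n-1}$ and $l(z^n)\geq l(z^{n-1})+1$. If at any stage $l(z^n)=\infty$, terminate as before, writing $z^n = xa_1 c$ and setting $y = a_1 c$.

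Otherwise the process runs forever producing only finite $z^n$ with $l(z^n)\to\infty$, and the nested-prefix property means there is a well-defined infinite sequence $w\in\Sinf$ whose first $l(z^n)$ entries agree with $z^n$ for every $n$. By Corollary~\ref{convergence-in-full-shift-cor}(a), $z^n\to w$ in $\Sigma_\A$: given $M\in\N$, once $l(z^n)\geq M$ we have $z^n_i = w_i$ for $1\leq i\leq M$. Since each $z^n\in X$ and $X$ is closed (property (i) of Definition~\ref{shift-space-def}), we conclude $w\in X$. As $w$ starts with $xa_1$, write $w = xa_1 c$ with $c\in\Sinf$ and set $y := a_1 c$; then $xy = w\in X$, $y\in\Sinf$, and $y_1 = a_1\notin F$, as required.

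There is no real obstacle, only a small point to verify carefully: that a sequence of finite words of increasing length converges to the natural infinite word built from their common prefixes. This is exactly what Corollary~\ref{convergence-in-full-shift-cor}(a) supplies, since convergence to an infinite limit is detected by agreement on longer and longer initial segments, which the nested construction delivers automatically. Closedness of $X$ then does all the remaining work.
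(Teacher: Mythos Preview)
Your proof is correct and follows essentially the same approach as the paper: iteratively apply the infinite-extension property to build longer and longer finite extensions in $X$, terminate if some stage is infinite, and otherwise take the limit using closedness of $X$. Your version is slightly more explicit about the convergence step (invoking Corollary~\ref{convergence-in-full-shift-cor}(a)), whereas the paper simply asserts the limit; otherwise the arguments are the same.
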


\begin{proof}
If $x \in \Xfin$, then by the infinite-extension property of $X$ there exists $a_1 \in \A \setminus F$ and $y^1 \in \Sigma_\A$ with $xa_1y^1 \in X$.  If $xa_1y^1 \in X$ is infinite, we are done.  If not, we do the same process to $xa_1y^1$ and continue recursively, at each step either finding an infinite-extension of $x$ that is in $X$ or finding an element
$$z^n := x a_1 y^1 a_2 y^2 \ldots a_n y^n \in X$$
of finite length.  We see that $\{ z^n \}_{n=1}^\infty$ is a sequence in $X$ with $\lim_{n \to \infty} z^n = xa_1 y^1a_2y^2 \ldots \in X$.  Moreover, $xa_1 y^1a_2y^2 \ldots$ is an infinite sequence.
\end{proof}

The following proposition shows that a shift space $X$ is determined by the subset $\Xinf$.

\begin{proposition} \label{Xinf-closure-is-X-prop}
If $X \subseteq \Sigma_\A$ is a shift space, then $\Xinf$ is dense in $X$.
\end{proposition}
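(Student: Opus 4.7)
The plan is to show that every $x \in X$ lies in the closure of $\Xinf$, by approximating $x$ arbitrarily closely by infinite sequences in $X$. The case $x \in \Xinf$ is immediate, so the work is to handle $x \in \Xfin$, and here the infinite-extension property together with the explicit neighborhood bases from Theorem~\ref{basis-for-full-shift-thm} will do all the work.

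Fix $x \in \Xfin$. By Theorem~\ref{basis-for-full-shift-thm}, a neighborhood base of $x$ in $\Sigma_\A$ is given by the generalized cylinder sets $\{Z(x,F) : F \subseteq \A \text{ finite}\}$, so it suffices to produce, for each finite $F \subseteq \A$, some infinite sequence $z \in \Xinf \cap Z(x,F)$. This is precisely what Proposition~\ref{inf-ext-in-shift-prop} delivers: applied to $x$ and the finite set $F$, it yields $y \in \Sinf$ with $xy \in X$ and $y_1 \notin F$. Setting $z := xy$, we have $l(z) = \infty$, so $z \in \Xinf$, and since $z$ extends $x$ with $z_{l(x)+1} = y_1 \notin F$, we get $z \in Z(x,F)$. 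Hence $Z(x,F) \cap \Xinf \neq \emptyset$, which shows $x \in \overline{\Xinf}$.

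I do not anticipate any genuine obstacle here; the statement is essentially a direct corollary of Proposition~\ref{inf-ext-in-shift-prop} combined with the neighborhood-base description in Theorem~\ref{basis-for-full-shift-thm}. The only subtlety worth flagging is that one must use the neighborhood base at finite-length points, which consists of the generalized cylinder sets $Z(x,F)$ rather than the plain cylinder set $Z(x)$: a finite sequence $x$ will typically have both finite and infinite extensions inside $Z(x)$, and one needs the freedom to exclude finitely many continuation symbols in order to produce an infinite extension in every neighborhood, not merely in $Z(x)$ itself. The infinite-extension property is tailored exactly for this, since it guarantees that infinitely many continuation symbols $a$ keep $x$ extendible inside $X$, and Proposition~\ref{inf-ext-in-shift-prop} refines this to provide an actual infinite extension avoiding any prescribed finite $F$.
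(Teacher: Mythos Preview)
Your proof is correct and essentially identical to the paper's: both invoke Theorem~\ref{basis-for-full-shift-thm} for the neighborhood base $\{Z(x,F)\}$ at a finite-length point and then apply Proposition~\ref{inf-ext-in-shift-prop} to produce an infinite extension $xy \in \Xinf$ with $y_1 \notin F$. Your additional commentary about why generalized cylinder sets (rather than plain $Z(x)$) are needed is accurate but not required for the argument.
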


\begin{proof}
Suppose that $x \in X$ with $l(x) < \infty$.  By Proposition~\ref{basis-for-full-shift-thm} the collection of $Z(x,F)$ such that $F$ is a finite subset of $\A$ is a neighborhood base of $x$.  By Proposition~\ref{inf-ext-in-shift-prop} there exists $y \in \Sinf$ such that $y_1 \notin F$ and $xy \in X$.  Hence $xy \in Z(x,F) \cap \Xinf$, and $x$ is a limit point of $\Xinf$.
\end{proof}

\begin{corollary} \label{Xinf-determines-X-cor}
If $X \subseteq \Sigma_\A$ and $Y \subseteq \Sigma_\A$ are shift spaces over $\A$, then $X=Y$ if and only if $\Xinf = Y^\textnormal{inf}$.
\end{corollary}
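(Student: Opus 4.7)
The plan is straightforward: this corollary is essentially packaging of Proposition~\ref{Xinf-closure-is-X-prop}, so the proof should be a short two-line argument invoking that proposition. I would not attempt any new topological work here.

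The forward direction is immediate from the definition: if $X = Y$, then $\Xinf = X \cap \Sinf = Y \cap \Sinf = Y^{\textnormal{inf}}$. For the converse, I would assume $\Xinf = Y^{\textnormal{inf}}$ and argue by taking closures in $\Sigma_\A$. Since $X$ is a shift space, Definition~\ref{shift-space-def}(i) gives that $X$ is closed in $\Sigma_\A$, and Proposition~\ref{Xinf-closure-is-X-prop} gives that $\Xinf$ is dense in $X$; combining these yields $X = \overline{\Xinf}$. The same reasoning applied to $Y$ gives $Y = \overline{Y^{\textnormal{inf}}}$. From the hypothesis $\Xinf = Y^{\textnormal{inf}}$, we then conclude
\begin{equation*}
X = \overline{\Xinf} = \overline{Y^{\textnormal{inf}}} = Y.
\end{equation*}

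There is no real obstacle: all the substantive work is concentrated in Proposition~\ref{Xinf-closure-is-X-prop}, whose proof in turn relied on the infinite-extension property via Proposition~\ref{inf-ext-in-shift-prop}. The corollary merely records the useful consequence that the finite sequences in a shift space are topologically determined by the infinite ones, so there is no need to treat $\Xfin$ and $Y^{\textnormal{fin}}$ separately.
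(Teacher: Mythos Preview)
Your proof is correct and is exactly the argument the paper has in mind: the corollary is stated without proof precisely because it follows immediately from Proposition~\ref{Xinf-closure-is-X-prop} by taking closures, as you wrote.
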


Having defined shift spaces, our next order of business is to show that, as in the classical case, we can describe any shift space in terms of its ``forbidden blocks".

\begin{definition}
We will use the term \emph{block} as another name for the elements of $\Sfin := \bigcup_{k=0}^\infty \A^k$, with the \emph{empty block} being our empty sequence $\0$.  If $x \in \Sigma_\A$, a \emph{subblock} of $x$ is an element $u \in \Sfin$ such that $x = yuz$ for some $y \in \Sfin$ and some $z \in \Sigma_\A$.  By convention, the empty block $\0$ is a subblock of every element of $\Sigma_\A$.
\end{definition}

\begin{definition}
If $\F \subseteq \Sfin$, we define
\begin{align*}
\Xinf_\F &:= \{ x \in \Sinf : \text{ no subblock of $x$ is in $\F$} \} \\
\Xfin_\F &:= \{ x \in \Sfin : \text{ there are infinitely many $a \in \A$ for which}\\
& \qquad \qquad \qquad \qquad \text{there exists $y \in \Sinf$ such that $xay \in \Xinf_{\F}$}\} \\
X_\F &:= \Xinf_\F \cup \Xfin_\F.
\end{align*}
\end{definition}

\begin{remark}
Note that if $\0 \in \F$, then $X_\F = \emptyset$ is the empty shift space.  Hence one must have $\F \subseteq \Sfin \setminus \{ \0 \}$ to produce a nondegenerate shift space.
\end{remark}

\begin{remark}
If $\A$ is infinite and $\F = \emptyset$, then $X_\F = \Sigma_\A$ is the full shift.
\end{remark}

\begin{proposition} \label{XF-shift-space-prop}
If $\F \subseteq  \Sfin$, then $X_\F$ is a shift space.
\end{proposition}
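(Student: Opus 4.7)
The plan is to verify properties (i), (ii), (iii) of Definition~\ref{shift-space-def} separately. I will dispatch (ii) and (iii) first since they follow almost directly from the definitions, and then address (i), the closedness of $X_\F$, which is the main technical obstacle.

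\textbf{Infinite-extension property and shift invariance.} For (iii), if $x \in X_\F^{\textnormal{fin}}$ then by the very definition of $X_\F^{\textnormal{fin}}$ there are infinitely many $a \in \A$ with some $y \in \Sinf$ satisfying $xay \in X_\F^{\textnormal{inf}} \subseteq X_\F$, witnessing the infinite-extension property. For (ii), observe that any subblock of $\sigma(x)$ is also a subblock of $x$. Hence if $x \in X_\F^{\textnormal{inf}}$ then $\sigma(x) \in X_\F^{\textnormal{inf}}$. If $x = x_1 \ldots x_n \in X_\F^{\textnormal{fin}}$ with $n \geq 2$, then for each of the infinitely many $a$ with $x_1 \ldots x_n a y \in X_\F^{\textnormal{inf}}$ the shifted sequence $x_2 \ldots x_n a y$ lies in $X_\F^{\textnormal{inf}}$, so $\sigma(x) \in X_\F^{\textnormal{fin}}$. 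For $n = 1$ (and for $x = \0$, assuming $X_\F \neq \emptyset$) a similar check shows $\0 \in X_\F^{\textnormal{fin}}$.

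\textbf{Closedness.} This is the main step. I would prove $\Sigma_\A \setminus X_\F$ is open by producing, for each $x \notin X_\F$, a generalized cylinder set $Z(w,F')$ with $x \in Z(w,F')$ and $Z(w,F') \cap X_\F = \emptyset$. The analysis splits into cases according to why $x$ fails to lie in $X_\F$.

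\emph{Case A: some subblock of $x$ belongs to $\F$.} If $l(x) = \infty$ and $x_i \ldots x_{i+m-1} \in \F$, take $w = x_1 \ldots x_{i+m-1}$ and $F' = \emptyset$; any element of $Z(w)$ (finite or infinite) contains this forbidden subblock, and the same holds for any putative infinite extension of a finite element of $Z(w)$, so $Z(w) \cap X_\F = \emptyset$. If $l(x) < \infty$ and $x$ itself has a forbidden subblock, the same argument applies to $Z(x)$.

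\emph{Case B: $l(x) < \infty$, $x$ has no forbidden subblock, but $x \notin X_\F^{\textnormal{fin}}$.} Then by definition the set $F' := \{ a \in \A : \exists\, y \in \Sinf \text{ with } xay \in X_\F^{\textnormal{inf}} \}$ is finite. Set $w = x$. To show $Z(x,F') \cap X_\F = \emptyset$, take any $z \in Z(x,F')$, write $z = xw'$ with either $w' = \0$ (in which case $z = x \notin X_\F$) or $w'_1 \notin F'$. If $z \in X_\F^{\textnormal{inf}}$ then $z = x w'_1 (w'_2 w'_3 \ldots)$ forces $w'_1 \in F'$, contradiction. If $z \in X_\F^{\textnormal{fin}}$ then some infinite extension $zby \in X_\F^{\textnormal{inf}}$ exhibits $xw'_1(\ldots by) \in X_\F^{\textnormal{inf}}$, again forcing $w'_1 \in F'$, contradiction.

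The hard part is setting up Case B correctly: one must use the finiteness of the exceptional set $F'$ given by the failure of the infinite-extension condition to produce the correct generalized cylinder neighborhood. Cylinder sets alone do not suffice here, which is exactly why the generalized cylinder sets from Theorem~\ref{basis-for-full-shift-thm} were introduced, and this is where the argument crucially uses the compact-open basis of $\Sigma_\A$.
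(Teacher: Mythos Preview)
Your proof is correct, and it takes a genuinely different route from the paper's. For closedness the paper argues sequentially: given a convergent sequence $\{x^n\}$ in $X_\F$ with limit $x$, it uses the description of convergence in Corollary~\ref{convergence-in-full-shift-cor} to show $x \in X_\F$, splitting on whether $l(x)$ is infinite or finite. You instead show the complement is open directly, producing for each $x \notin X_\F$ a basic neighborhood $Z(w,F')$ disjoint from $X_\F$; your Case~B, where the failure of the infinite-extension condition supplies the finite set $F'$, is the dual of the paper's ``for every finite $F \subseteq \A$'' step. Your approach has the advantage of working verbatim for uncountable alphabets, whereas the paper's sequential argument tacitly relies on first countability of $\Sigma_\A$ (which by Corollary~\ref{countability-cor} holds only when $\A$ is countable); the paper's argument would need to be phrased with nets to cover the general case. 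On the other hand, the paper's proof is slightly shorter given the convergence characterization already in hand.
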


\begin{proof}
First, we show that $X_\F$ is closed.  Suppose that we have a sequence $\{ x^n \}_{n=1}^\infty \subseteq X_\F$ and that $\lim_{n \to \infty} x^n = x \in \Sigma_\A$.  If $l(x) = \infty$, then by Corollary~\ref{convergence-in-full-shift-cor}  for every $M \in \N$ there exists $N \in \N$ such that $n > N$ implies that $x_i^n = x_i$ for all $1 \leq i \leq M$.  Hence $x_1 \ldots x_M = x^n_1 \ldots x^n_M$ for all $n > N$.  Since $x^n \in X_\F$, no subblock of $x^n$ is in $\F$.  Hence no subblock of $x_1 \ldots x_M$ is in $\F$, and since this holds for all $M \in \N$, it follows no subblock of $x$ is in $\F$, and hence $x \in \Xinf_\F \subseteq X_\F$.  

If $l(x) < \infty$, then let $F$ be any finite subset of $\A$.  By Corollary~\ref{convergence-in-full-shift-cor} there exists $n \in \N$ such that $l(x^n) \geq l(x)$, $x^n_{l(x)+1} \notin F$, and $x_i^n = x_i$ for all $1 \leq i \leq l(x)$.  Thus $x$ agrees with $x^n$ in the first $l(x)$ entries with $x^{n}_{l(x)+1} \notin F$.  Since $x^n$ is either in $\Xinf_\F$ or $\Xfin_\F$, we can find $a \notin F$ and $y \in \Xinf_{\F}$ such that $xay \in \Xinf_\F$.  Since this is true for any finite subset $F \subseteq \A$, there exist infinitely many $a \in \A$ with the property that there is $y \in \Xinf_\F$ such that $xay \in \Xinf_\F$.  Hence $x \in \Xfin_\F \subseteq X_\F$.  Thus $X_\F$ is closed.

Next, we observe that $\sigma (\Xinf_\F) \subseteq \Xinf_\F$ and $\sigma(\Xfin_\F) \subseteq \Xfin_\F$ so that $\sigma (X_\F) \subseteq X_\F$. Finally, we verify that $X_\F$ has the infinite-extension property.  This is an immediate consequence of the definition of $\Xfin_\F$: If $x \in X_\F$ and $l(x) < \infty$, then $x \in \Xfin_\F$ and by the definition of $\Xfin_\F$ there exist infinitely many $a \in \A$ for which there is an element $y \in \Sinf$ such that $xay \in \Xinf_\F \subseteq X_\F$.  Hence $X_\F$ has the infinite-extension property.
\end{proof}

\begin{definition}
Let $X \subseteq \Sigma_\A$.  We define the \emph{set of blocks of $X$} to be $$B(X) := \{ u \in \Sfin : \text{ $u$ is a subblock of some element of $X$} \}.$$  For $n \in \N \cup \{ 0 \}$ we define the \emph{set of $n$-blocks of $X$} to be $$B_n (X) := \{ u \in \A^n : \text{ $u$ is a subblock of some element of $X$} \}.$$ Note that $B_0(X) = \{ \0 \}$, and $B_1(X) \subseteq \A$ is the set of symbols that appear in the elements of $X$.  In addition, $B(X) = \bigcup_{n=0}^\infty B_n(X)$.
\end{definition}

\begin{theorem}
A subset $X \subseteq \Sigma_\A$ is a shift space if and only if $X = X_\F$ for some subset $\F \subseteq \Sfin$.
\end{theorem}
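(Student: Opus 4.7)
The plan is to prove the nontrivial $(\Rightarrow)$ direction by taking $\F$ to be the set of \emph{forbidden blocks} $\F := \Sfin \setminus B(X)$, and then verifying that $X = X_\F$. The reverse direction $(\Leftarrow)$ is already Proposition~\ref{XF-shift-space-prop}.

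First I would show $X \subseteq X_\F$. If $x \in X$ has $l(x) = \infty$, then every subblock of $x$ belongs to $B(X)$ by definition and hence avoids $\F$, so $x \in \Xinf_\F$. If instead $x \in \Xfin$, I need to show $x \in \Xfin_\F$; this is where the infinite-extension property of $X$ (unpacked by Proposition~\ref{inf-ext-in-shift-prop}) does the work: it yields infinitely many $a \in \A$ admitting some $y \in \Sinf$ with $xay \in X \cap \Sinf$, and such $xay$ lies in $\Xinf_\F$ by the infinite-length case just handled.

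Next I would show $X_\F \subseteq X$, splitting into the infinite- and finite-length cases, in that order. For $x \in \Xinf_\F$, each finite initial segment $x_1 \ldots x_n$ lies in $B(X)$, so there exists $w^n \in X$ with $x_1 \ldots x_n$ occurring as a subblock of $w^n$. Applying an appropriate power of $\sigma$ (using that $X$ is shift-invariant) produces $z^n \in X$ that begins with $x_1 \ldots x_n$. Corollary~\ref{convergence-in-full-shift-cor}(a) then gives $z^n \to x$, and closure of $X$ forces $x \in X$. For $x \in \Xfin_\F$, the defining property of $\Xfin_\F$ supplies an infinite sequence of distinct symbols $a_n \in \A$ and infinite sequences $y^n$ with $xa_n y^n \in \Xinf_\F \subseteq X$ (using what was just proved). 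Given any finite $F \subseteq \A$, only finitely many $a_n$ can lie in $F$, so $xa_n y^n \to x$ by Corollary~\ref{convergence-in-full-shift-cor}(b); closure of $X$ again gives $x \in X$.

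The main obstacle is ensuring that $X_\F \subseteq X$ in the finite-length case, which requires the infinite-extension property on both sides: on the $X_\F$ side to guarantee infinitely many distinct continuation letters (needed for convergence in the topology of $\Sigma_\A$, per Corollary~\ref{convergence-in-full-shift-cor}(b)), and on the $X$ side to have enough extensions in $X$ whose limit recovers the finite sequence $x$. Everything else is bookkeeping involving the definitions of $B(X)$, $\Xinf_\F$, $\Xfin_\F$, and the characterization of convergence in $\Sigma_\A$.
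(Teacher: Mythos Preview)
Your proof is correct and follows essentially the same approach as the paper: both take $\F := \Sfin \setminus B(X)$, split each inclusion into the infinite-length and finite-length cases, and use shift-invariance plus closure for $X_\F \subseteq X$ and the infinite-extension property (via Proposition~\ref{inf-ext-in-shift-prop}) for $X \subseteq X_\F$. The only cosmetic difference is that the paper proves $X_\F \subseteq X$ first and $X \subseteq X_\F$ second, whereas you reverse the order.
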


\begin{proof}
If $X = X_\F$ for some subset $\F \subseteq \Sfin$, then $X$ is a shift space by Proposition~\ref{XF-shift-space-prop}.

Conversely, let $X \subseteq \Sigma_\A$ be a shift space.  Define $\F := \Sfin \setminus B(X)$, so that $\F$ consists of those elements in $\Sfin$ that are not subblocks of elements of $X$.  We shall show that $X = X_\F$.  

Let $x \in X_\F$, and consider two cases.

\smallskip

\noindent \textsc{Case I:} $l(x) = \infty$.  

Then $x \in \Xinf_{\F}$, and no subblock of $x$ is in $\F$.  Thus for all $n \in \N$ we have that $x_1 \ldots x_n \notin \F$, and $x_1 \ldots x_n \in B(X)$.  Hence for all $n \in \N$, there exists $u^n \in \Sfin$ and $y^n \in \Sigma_\A$ such that $u^nx_1 \ldots x_n y^n \in X$.  Because $\sigma (X) \subseteq X$, it follows $x_1 \ldots x_n y^n = \sigma^{l(u^n)} (u^n x_{1} \ldots x_{n} y^n) \in X$.  Because $x = \lim_{n \to \infty} x_{1} \ldots x_{n} y^n$ and $X$ is closed, we have $x \in X$.

\smallskip 

\noindent \textsc{Case II:} $l(x) < \infty$.  

Then $x \in \Xfin_\F$, and there exists an infinite sequence of distinct elements $a_1, a_2, \ldots \in \A$ such that for each $n \in \N$ there exists $y^n \in \Sinf$ such that $xa_ny^n \in \Xinf_\F$.  By the argument of Case~I we have $x a_{n} y^{n} \in X$ for all $n \in \N$.  Since $X$ is closed, $x = \lim_{n \to \infty} xa_ny^n \in X$.

Thus we have shown that $X_\F \subseteq X$.  For the reverse inclusion, suppose that $x \in X$.  If $x \in \Xinf$, then $x$ has infinite length and no subblock of $x$ is in $\F$, so $x \in \Xinf \subseteq X_\F$.  If $x \in \Xfin$, then by the infinite-extension property of $X$, there exist an infinite sequence of distinct elements $a_1, a_2, \ldots \in \A$ such that for each $n \in \N$ there exists $y^n \in \Sigma_\A$ such that $xa_ny^n \in X$.  By Proposition~\ref{inf-ext-in-shift-prop} we may assume that each $y^n$ is infinite, and hence each $xa_ny^n$ is infinite.  Since each $xa_ny^n \in X$ is infinite, and no subblock of $xa_ny^n$ is in $\F$, we have $xa_ny^n \in \Xinf_\F$, and hence by the definition of $\Xfin_\F$, we have $x \in \Xfin_\F$.  Thus $X \subseteq X_\F$.
\end{proof}

We conclude this section by discussing shift spaces with certain finiteness restrictions on the allowed symbols.

\begin{definition}
Let $\A$ be an alphabet, and let  $X \subseteq \Sigma_\A$ be a shift space over $\A$.  We say that $X$ is \emph{finite-symbol} (or \emph{finite}) if $B_1(X)$ is finite, and we say $X$ is \emph{infinite-symbol} (or \emph{infinite}) otherwise.  We say that $X$ is \emph{row-finite} if for every $a \in \A$, the set $\{ b \in \A :  ab \in B(X) \}$ is finite.  
\end{definition}

\begin{remark}
Note that every finite-symbol shift space is row-finite.  Also note that if $\A$ is a finite set, then every shift space over $\A$ is finite-symbol.
\end{remark}

The following propositions give us several alternate ways to characterize finite-symbol and row-finite shift spaces.

\begin{proposition} \label{fin-char-prop}
Let $X \subseteq \Sigma_\A$ be a shift space.  Then the following are equivalent:
\begin{itemize}
\item[(i)] $X$ is finite-symbol.
\item[(ii)] $\Xfin = \emptyset$.
\item[(iii)] $X = \Xinf$.
\item[(iv)] $\Xinf$ is a closed subset of $\Sigma_\A$.
\item[(v)] $\0$ is not a limit point of $\Xinf$.
\item[(vi)]  $\0 \notin X$.
\end{itemize}
\end{proposition}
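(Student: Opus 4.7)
The plan is to prove the six conditions equivalent by a cycle $(i) \Rightarrow (ii) \Rightarrow (iii) \Rightarrow (iv) \Rightarrow (v) \Rightarrow (vi) \Rightarrow (i)$. The key ingredients are the disjoint decomposition $X = \Xinf \sqcup \Xfin$, the fact that $l(\0) = 0$ (so $\0 \notin \Xinf$), the closedness of $X$, the infinite-extension property, Proposition~\ref{inf-ext-in-shift-prop} (any finite sequence in $X$ extends to an infinite one), and the characterization of convergence to a finite-length sequence given in Corollary~\ref{convergence-in-full-shift-cor}(b).

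First I would handle the cheap implications. The implication $(i) \Rightarrow (ii)$ is the contrapositive argument: if $x \in \Xfin$, the infinite-extension property produces infinitely many $a \in \A$ with $xay \in X$ for some $y$, and each such $a$ lies in $B_1(X)$, contradicting finiteness. The equivalence $(ii) \iff (iii)$ is immediate from $X = \Xinf \sqcup \Xfin$. The implication $(iii) \Rightarrow (iv)$ follows because $X$ is closed by hypothesis. For $(iv) \Rightarrow (v)$, since $\0 \notin \Xinf$ and $\Xinf$ is closed, $\0$ cannot be a limit point of $\Xinf$.

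The two interesting steps are $(v) \Rightarrow (vi)$ and $(vi) \Rightarrow (i)$, both proved contrapositively using the convergence criterion of Corollary~\ref{convergence-in-full-shift-cor}(b) applied at the point $\0$. For $(v) \Rightarrow (vi)$, assume $\0 \in X$. Since $l(\0) = 0 < \infty$, the infinite-extension property, combined with Proposition~\ref{inf-ext-in-shift-prop}, produces a sequence of distinct symbols $\{a_n\}_{n=1}^\infty \subseteq \A$ and infinite sequences $y^n \in \Sinf$ with $a_n y^n \in \Xinf$. Given any finite $F \subseteq \A$, since the $a_n$ are distinct, eventually $a_n \notin F$, so $(a_n y^n)_{l(\0)+1} = a_n \notin F$ and hence $a_n y^n \to \0$, exhibiting $\0$ as a limit point of $\Xinf$. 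For $(vi) \Rightarrow (i)$, assume $B_1(X)$ is infinite, pick a sequence of distinct $a_n \in B_1(X)$, and use shift-invariance $\sigma(X) \subseteq X$ to obtain $x^n \in X$ with $x^n_1 = a_n$ (shifting off whatever precedes $a_n$ in a witnessing element of $X$). The same convergence criterion gives $x^n \to \0$, and closedness of $X$ forces $\0 \in X$.

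The main obstacle, modest as it is, is keeping straight the convergence criterion at a finite-length point: one must verify the ``$x^n_{l(x)+1} \notin F$'' condition, which at $x = \0$ reduces to requiring that the first symbols of the approximating sequences eventually leave any prescribed finite set $F$, and this is exactly what the distinctness of the $a_n$ supplies. Once this is isolated, both $(v) \Rightarrow (vi)$ and $(vi) \Rightarrow (i)$ reduce to the same short argument.
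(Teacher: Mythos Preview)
Your proof is correct and follows the same cycle $(i)\Rightarrow(ii)\Rightarrow\cdots\Rightarrow(vi)\Rightarrow(i)$ as the paper, with the same key ideas (infinite-extension property, shift-invariance, convergence criterion at $\0$). The only cosmetic difference is at $(v)\Rightarrow(vi)$: the paper simply invokes Proposition~\ref{Xinf-closure-is-X-prop} (density of $\Xinf$ in $X$) to conclude that $\0\notin\overline{\Xinf}=X$, whereas you reprove the relevant instance of that density by hand, explicitly constructing the approximating sequence $a_n y^n\to\0$ from the infinite-extension property at $\0$.
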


\begin{proof}
$(i) \Rightarrow (ii)$. Since $B_1(X)$ is finite, the infinite-extension property of $X$ implies that $\Xfin = \emptyset$.

$(ii) \Rightarrow (iii)$. Since $X = \Xinf \cup \Xfin$, the result follows.

$(iii) \Rightarrow (iv)$. If $X = \Xinf$, then Proposition~\ref{Xinf-closure-is-X-prop} implies that $\overline{\Xinf} = X = \Xinf$, so $\Xinf$ is closed.

$(iv) \Rightarrow (v)$. Since $\0 \notin \Xinf$ and $\Xinf$ is closed, $\0$ is not a limit point of $\Xinf$.

$(v) \Rightarrow (vi)$. Proposition~\ref{Xinf-closure-is-X-prop} implies that $\overline{\Xinf} = X$.  Thus if $\0$ is not a limit point of $\Xinf$, it follows that $\0 \notin X$.

$(vi) \Rightarrow (i)$.  Suppose $X$ is not finite-symbol.  Then there exists an infinite sequence $a_1, a_2, \ldots \in B_1(X)$ of distinct elements.  Hence, using Proposition~\ref{inf-ext-in-shift-prop} for each $n \in \N$ there exists $z^n \in \Sfin$ and $x^n \in \Sinf$ such that $z^na_nx^n \in X$.  Since $X$ is closed under the shift map $\sigma$, we have $a_n x^n = \sigma^{l(z^n)} (z^na_nx^n) \in X$.  Finally, since $X$ is closed and the $a_n$ are distinct, $\0 = \lim_{n \to \infty} a_nx^n \in X$.
\end{proof}

\begin{remark}
The astute reader may be concerned that Proposition~\ref{fin-char-prop} implies that the collection of infinite paths in a row-finite infinite graph is not a shift space.  This is true, and we will give a satisfactory  explanation in Section~\ref{analogues-of-SFT-sec}.  In particular, we shall see in Definition~\ref{edge-shift-closure-Einf-def} and Proposition~\ref{description-of-edge-shift-prop} that the edge shift of a graph is defined to be the closure of the collection of infinite paths, and thus the edge shift of a row-finite infinite path is equal to the infinite path space together with the empty sequence $\0$.  The next proposition shows that in a row-finite shift space the only possible finite sequence is $\0$.
\end{remark}

\begin{proposition} \label{rf-char-prop}
Let $X \subseteq \Sigma_\A$ be a shift space.  Then the following are equivalent:
\begin{itemize}
\item[(i)] $X$ is row-finite.
\item[(ii)] $\Xfin \subseteq \{ \0 \}$.
\item[(iii)] $\Xinf \cup \{ \0 \}$ is a closed subset of $\Sigma_\A$.
\item[(iv)] Either $X = \Xinf \cup \{ \0 \}$ or $X$ is finite-symbol.
\item[(v)] For each $a \in \A$, the set $\{ b \in \A : \text{there exists $x \in \Xinf$ with $abx \in X$} \}$ is finite.
\end{itemize}
\end{proposition}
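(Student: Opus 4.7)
The plan is to prove the cycle $(i) \Rightarrow (ii) \Rightarrow (iii) \Rightarrow (iv) \Rightarrow (v) \Rightarrow (i)$, in analogy with Proposition~\ref{fin-char-prop}. The four main tools will be the density result $\overline{\Xinf} = X$ (Proposition~\ref{Xinf-closure-is-X-prop}), the infinite-extension lemma (Proposition~\ref{inf-ext-in-shift-prop}), the finite-symbol characterization (Proposition~\ref{fin-char-prop}), and the convergence criterion (Corollary~\ref{convergence-in-full-shift-cor}).

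For $(i) \Rightarrow (ii)$, I argue by contradiction: if $x = x_1 \ldots x_n \in \Xfin$ with $n \geq 1$, then the infinite-extension property supplies infinitely many $a \in \A$ for which $xa$ is a prefix of some element of $X$, so $x_n a \in B(X)$ for infinitely many $a$, violating row-finiteness at the symbol $x_n$. For $(ii) \Rightarrow (iii)$, I use $\overline{\Xinf} = X$ to compute $\overline{\Xinf \cup \{\0\}} = X \cup \{\0\}$, which by hypothesis (ii) equals $\Xinf \cup \{\0\}$, proving the latter is closed. For $(iii) \Rightarrow (iv)$, the same density result gives $X = \overline{\Xinf} \subseteq \Xinf \cup \{\0\}$ (the right side now being closed), so $X$ is either $\Xinf \cup \{\0\}$ or $\Xinf$; in the second case Proposition~\ref{fin-char-prop} forces $X$ to be finite-symbol.

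For $(iv) \Rightarrow (v)$, the finite-symbol case is immediate since the set in question is contained in $B_1(X)$. When $X = \Xinf \cup \{\0\}$ I argue by contradiction: if some $a \in \A$ had infinitely many distinct $b \in \A$ with $abx^b \in X$ for some $x^b \in \Xinf$, then picking distinct such $b_1, b_2, \ldots$ produces a sequence $\{ab_n x^{b_n}\} \subseteq X$ converging to the finite sequence $a$ by Corollary~\ref{convergence-in-full-shift-cor}(b) (the first entry is always $a$, and the second entry $b_n$ eventually leaves any prescribed finite subset of $\A$). Closedness of $X$ would force $a \in X$, contradicting $X = \Xinf \cup \{\0\}$ since $a$ has length $1$.

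For $(v) \Rightarrow (i)$, it suffices to show $\{b \in \A : ab \in B(X)\} \subseteq \{b \in \A : \exists\, x \in \Xinf \text{ with } abx \in X\}$, because the latter set is finite by (v). Given $ab \in B(X)$, shift-invariance yields some $abv \in X$; if $l(abv) = \infty$ I set $x := v$, and if $l(abv) < \infty$ I apply Proposition~\ref{inf-ext-in-shift-prop} to extend $abv$ to an infinite $abvu \in X$ and set $x := vu$. In both cases a further application of shift-invariance (via $\sigma^2$) puts $x \in \Xinf$. The main obstacle is the subcase $X = \Xinf \cup \{\0\}$ of $(iv) \Rightarrow (v)$: even when only $\0$ is a finite sequence in $X$, one must rule out an infinite $b$-fan-out after a single symbol $a$, and the approximation-to-$a$ argument via Corollary~\ref{convergence-in-full-shift-cor} is exactly what seals it. All other implications are bookkeeping built on density and the infinite-extension property.
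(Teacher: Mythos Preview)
Your proof is correct and follows essentially the same cycle $(i)\Rightarrow(ii)\Rightarrow(iii)\Rightarrow(iv)\Rightarrow(v)\Rightarrow(i)$ as the paper, using the same tools (density of $\Xinf$, the infinite-extension property, and the convergence criterion). The only cosmetic differences are that the paper handles $(iv)\Rightarrow(v)$ and $(v)\Rightarrow(i)$ by contrapositive rather than splitting into cases or proving a set inclusion, and it organizes $(ii)\Rightarrow(iii)$ as a two-case argument rather than a closure computation; the underlying ideas are identical.
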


\begin{proof}
$(i) \Rightarrow (ii)$. If $x \in \Xfin$, then by the infinite-extension property there are infinitely many $a \in \A$ such that $xa \in B(X)$.  However, since $X$ is a row-finite shift space, the only way this can occur is if $x = \0$.

$(ii) \Rightarrow (iii)$.  If $\Xfin \subseteq \{ \0 \}$, then either $\Xfin = \{ \0 \}$ or $\Xfin = \emptyset$.  If $\Xfin = \{ \0 \}$, then $\Xinf \cup \{ \0 \} = \Xinf \cup \Xfin = X$ is closed since $X$ is a shift space.  If $\Xfin = \emptyset$, then $X = \Xinf$ and $\Xinf \cup \{ \0 \} = X \cup \{ \0 \}$ is the union of two closed sets, and hence closed.

$(iii) \Rightarrow (iv)$. If $\Xinf \cup \{ \0 \}$ is closed, then $\overline{\Xinf}$ is equal to either $\Xinf$ or $\Xinf \cup \{ \0 \}$.  From Proposition~\ref{Xinf-closure-is-X-prop} we have that $\overline{\Xinf} = X$.  Thus either $X = \Xinf\cup \{ \0 \}$ or $X = \Xinf$.  In the latter case, $X$ is finite-symbol by Proposition~\ref{fin-char-prop}(iii).

$(iv) \Rightarrow (v)$.  If $(v)$ does not hold, then for some $a \in \A$, there is an infinite sequence $b_1, b_2, \ldots \in \A$ of distinct elements with the property that for each $n \in \N$ there exists $x^n \in \Xinf$ such that $a b_{n} x^n \in X$.  Then $b_n \in B(X)$ for all $n \in \N$, and $X$ is not finite-symbol.  In addition, since $X$ is a shift space and therefore closed, we have $a = \lim_{n \to \infty} ab_nx^n \in X$, which implies $X \neq \Xinf \cup \{ \0 \}$.  Hence $(iv)$ does not hold.

$(v) \Rightarrow (i)$.  If $(i)$ does not hold, then there exists $a \in \A$ and an infinite sequence $b_1, b_2, \ldots \in \A$ of distinct elements with the property that $ab_n \in B(X)$ for all $n \in \N$.  This implies that for each $n \in \N$ the block $ab_n$ is a subblock of an element of $X$.  Hence, using Proposition~\ref{inf-ext-in-shift-prop} there exists $z \in \Sfin$ and $x \in \Sinf$ such that $zab_nx \in X$.  Because $X$ is closed under the shift map $\sigma$, we have $ab_nx = \sigma^{l(z)} (zab_nx) \in X$.  Since the $b_n$ are distinct, this shows $(v)$ does not hold.
\end{proof}

Proposition~\ref{fin-char-prop} and Proposition~\ref{rf-char-prop} show that if we have a row-finite shift space, then the presence of the empty sequence $\0$ determines whether $X$ is finite-symbol or infinite-symbol.

\begin{corollary}
Let $X$ be a row-finite shift space.  Then either $X = \Xinf$ or $X = \Xinf \cup \{ \0 \}$.  If $X = \Xinf$, then $X$ is finite-symbol.  If $X = \Xinf \cup \{ \0 \}$, then $X$ is infinite-symbol.
\end{corollary}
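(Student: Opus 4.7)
The plan is to reduce the statement directly to Proposition~\ref{rf-char-prop} and Proposition~\ref{fin-char-prop}, since together they already encode everything we need.

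First I would observe that, as $X = \Xinf \cup \Xfin$ by definition, knowing which finite sequences lie in $X$ determines the dichotomy. Since $X$ is row-finite, Proposition~\ref{rf-char-prop}(ii) gives $\Xfin \subseteq \{\0\}$, so $\Xfin$ is either empty or $\{\0\}$. Substituting back into $X = \Xinf \cup \Xfin$ yields the two cases $X = \Xinf$ and $X = \Xinf \cup \{\0\}$.

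Next I would handle the two cases separately. If $X = \Xinf$, then $\Xfin = \emptyset$, which by Proposition~\ref{fin-char-prop} (the equivalence of (i) and (ii)) forces $X$ to be finite-symbol. If instead $X = \Xinf \cup \{\0\}$, then $\0 \in X$, and the equivalence of (i) and (vi) in Proposition~\ref{fin-char-prop} (taking the contrapositive) shows that $X$ fails to be finite-symbol, i.e.\ $X$ is infinite-symbol.

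There is no real obstacle here: the corollary is essentially a repackaging of the two preceding propositions. The only minor care is to make sure one does not claim a strict dichotomy on the $B_1(X)$ side before invoking Proposition~\ref{rf-char-prop}; the cleanest route is to read off $\Xfin$ from row-finiteness first, then let Proposition~\ref{fin-char-prop} classify each case via the presence or absence of $\0$.
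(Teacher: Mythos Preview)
Your proposal is correct and matches the paper's intent: the corollary is stated immediately after Proposition~\ref{fin-char-prop} and Proposition~\ref{rf-char-prop} with no separate proof, the paper simply remarking that those two propositions together show the presence of $\0$ determines whether $X$ is finite-symbol or infinite-symbol. Your argument makes this explicit in exactly the expected way.
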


\begin{remark}
Suppose that $\A$ is an alphabet, and that $X \subseteq \Sigma_\A$ is a symbol-finite shift space.  Then it follows from Proposition~\ref{fin-char-prop} that $\Xfin = \emptyset$.  Moreover, $X$ is then a subset of $\Sigma_{B_1(X)}$, the full shift over the finite alphabet $B_1(X)$.  As described in Remark~\ref{recover-classical-rem}, since $X$ is a shift space, $X$ is a subset of $\Sigma_{B_1(X)}^\textnormal{inf}$, which is the ``classical full shift" over the finite alphabet $B_1(X)$.  Hence $X$ is a shift space in the classical sense, over the finite alphabet $B_1(X)$.  This shows that any finite-symbol shift --- even one that is \emph{a priori} over an infinite alphabet --- may be viewed as a shift space over a finite alphabet.
\end{remark}

\begin{remark} \label{bases-inf-rf-fin-rem}
Recall that if $X$ is a shift space, for any $x \in \Sfin$ and finite subset $F \subseteq \A$ we define $Z_X(x) := Z(x) \cap X$ and $Z_X(x,F) := Z(x,F) \cap X$.  It follows from Theorem~\ref{basis-for-full-shift-thm} that $$ \{ Z_X(x,F) : x \in \Sfin \text{ and $F \subseteq \A$ is a finite subset} \}$$ is a basis for the topology on $X$.  If $X$ is a row-finite shift, then the set $$\{ Z_X(x) : x \in \Sfin \} \cup \{ Z_X(\0,F) : \text{ $F \subseteq \A$ is a finite subset} \}$$ is a basis for the topology on $X$.  If $X$ is a finite-symbol shift, then $$\{ Z_X(\alpha) : \alpha \in \Sfin \}$$ forms a basis for the topology on $X$.
\end{remark}

\section{Shift Morphisms and Conjugacy of Shift Spaces} \label{shift-morphisms-sec}

In the previous section we defined our basic objects of study, the shift spaces.  We now turn our attention to describing the appropriate morphisms between these objects.

\begin{definition} \label{shift-morphism-def}
Let $\A$ be an alphabet, and let $X, Y \subseteq \Sigma_\A$ be shift spaces over $\A$.  A function $\phi : X \to Y$ is a \emph{shift morphism} if the following three conditions are satisfied:
\begin{itemize}
\item[(i)] $\phi : X \to Y$ is continuous.
\item[(ii)] $\phi \circ \sigma = \sigma \circ \phi$.
\item[(iii)] $l(\phi (x)) = l(x)$ for all $x \in X$.
\end{itemize}
\end{definition}

Note that if $\phi : X \to Y$ is a shift morphism, then Condition~(iii) implies that $\phi(\Xinf) \subseteq Y^\textnormal{inf}$ and $\phi(\Xfin) \subseteq Y^\textnormal{fin}$.  When $X$ and $Y$ are infinite-symbol shift spaces, the following proposition shows Condition~(iii) may be replaced by another condition that is easier to verify.

\begin{proposition} \label{shorter-shift-verify-prop}
Let $\A$ be an alphabet, and let $X, Y \subseteq \Sigma_\A$ be infinite-symbol shift spaces over $\A$.  Then $\0 \in X$ and $\0 \in Y$, and $\phi : X \to Y$ is a shift morphism if and only if $\phi$ satisfies the following three conditions:
\begin{itemize}
\item[(i)] $\phi : X \to Y$ is continuous.
\item[(ii)] $\phi \circ \sigma = \sigma \circ \phi$.
\item[(iii')] $\phi(x) = \0$ if and only if $x = \0$.
\end{itemize}
\end{proposition}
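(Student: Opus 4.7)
The plan is to prove the two assertions separately: first that $\0$ belongs to both $X$ and $Y$, and then the equivalence of the two formulations of ``shift morphism.'' For the first assertion, I would simply invoke Proposition~\ref{fin-char-prop}, in particular the equivalence $(i) \Leftrightarrow (vi)$, which says that a shift space is finite-symbol if and only if $\0$ is not in it. Since $X$ and $Y$ are by hypothesis infinite-symbol, this immediately gives $\0 \in X$ and $\0 \in Y$, so the condition $\phi(x) = \0$ in (iii') makes sense.

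For the forward direction of the equivalence, assume $\phi$ is a shift morphism, so in particular (iii) holds. Since $\0$ is the unique sequence of length $0$, we have $\phi(x) = \0$ iff $l(\phi(x)) = 0$ iff $l(x) = 0$ iff $x = \0$, which is (iii'). This step is immediate.

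For the converse, assume (i), (ii), and (iii'). The key observation I would exploit is that for every $y \in \Sigma_\A$ and every $k \in \N \cup \{0\}$, one has $\sigma^k(y) = \0$ if and only if $k \geq l(y)$. Thus the length of $y$ is recoverable from the dynamics of $\sigma$ as the smallest $k$ for which $\sigma^k(y) = \0$ (or $\infty$ if no such $k$ exists). Combining this with the intertwining $\sigma^k \circ \phi = \phi \circ \sigma^k$ (iterated from (ii)) and with (iii'), I would argue as follows. If $l(x) = \infty$, then $\sigma^k(x) \neq \0$ for every $k$, hence $\sigma^k(\phi(x)) = \phi(\sigma^k(x)) \neq \0$ by (iii'), forcing $l(\phi(x)) = \infty$. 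If $l(x) = n \geq 1$, then $\sigma^n(x) = \0$ gives $\sigma^n(\phi(x)) = \phi(\0) = \0$ by (iii'), while $\sigma^{n-1}(x) \neq \0$ gives $\sigma^{n-1}(\phi(x)) \neq \0$ by (iii'), forcing $l(\phi(x)) = n$. The case $l(x) = 0$ is just (iii') applied in reverse.

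There is no real obstacle here, since the argument is a direct unraveling of the definitions; the only small subtlety is noticing that the length function is a dynamical invariant, namely the hitting time of the fixed point $\0$ under $\sigma$, so that any self-map intertwining $\sigma$ and fixing the ``empty-or-nonempty'' dichotomy must automatically preserve length.
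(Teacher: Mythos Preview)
Your proposal is correct and follows essentially the same approach as the paper's proof: both invoke Proposition~\ref{fin-char-prop} to place $\0$ in $X$ and $Y$, and both recover the length of $x$ as the first $k$ with $\sigma^k(x)=\0$, then use the intertwining relation together with (iii') to show $l(\phi(x))=l(x)$. Your framing of length as the ``hitting time of $\0$'' makes the idea slightly more explicit, but the argument is the same.
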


\begin{proof}
Because $X$ and $Y$ are infinite-symbol, it follows from Proposition~\ref{fin-char-prop} that $\0 \in X$ and $\0 \in Y$.  It is clear that any shift morphism must satisfy (iii').  Suppose $\phi$ satisfies (i), (ii), and (iii').  We shall show that $\phi$ must satisfy Condition~(iii) in Definition~\ref{shift-morphism-def}.  If $x \in X$ and $l(x) = \infty$, then for every $k \in \N$ we have $\sigma^k(x) \neq \0$.  Hence for all $k \in \N$, we have $\sigma^k(\phi(x)) = \phi (\sigma^k(x)) \neq \0$, and $l(\phi(x)) = \infty$.  Likewise, if $l(x) < \infty$, then $\sigma^{l(x)} (\phi(x)) = \phi (\sigma^{l(x)} (x)) = \phi (\0) = \0$, and for any $k < l(x)$ we have $\sigma^k(x) \neq \0$ and $\sigma^k (\phi(x)) = \phi (\sigma^k(x)) \neq \0$.  Thus $l(\phi(x)) = l(x)$.
\end{proof}

\begin{remark} \label{recover-finite-morphism-rem}
If $X$ and $Y$ are finite-symbol shift spaces, then $X=\Xinf$ and $Y = Y^\textnormal{inf}$, so Condition~(iii) in Definition~\ref{shift-morphism-def} is always satisfied.  Thus if $X$ and $Y$ are finite-symbol shift spaces, a function $\phi : X \to Y$ is a shift morphism if and only if $\phi$ is continuous and $\phi \circ \sigma = \sigma \circ \phi$.  Thus we recover the ``classical definition" of a shift morphism in the finite-symbol case.  
\end{remark}

\begin{remark}
If $X$ and $Y$ are shift spaces, there are four cases to consider depending on whether each of $X$ and $Y$ is finite-symbol or infinite-symbol.   If $X$ and $Y$ are both infinite-symbol, conditions for a function $\phi : X \to Y$ to be a shift morphism are given by Proposition~\ref{shorter-shift-verify-prop}.  If $X$ and $Y$ are both finite-symbol, then Remark~\ref{recover-finite-morphism-rem} shows that a function $\phi : X \to Y$ is a shift morphism if and only if Condition~(i) and Condition~(ii) of Definition~\ref{shift-morphism-def} are satisfied.    If $X$ is finite-symbol and $Y$ is infinite-symbol, then $X = \Xinf$ and a function $\phi : X \to Y$ is a shift morphism if and only if Condition~(i) and Condition~(ii) of Definition~\ref{shift-morphism-def} are satisfied and $\phi(X) \subseteq Y^\textnormal{inf}$.  If $X$ is infinite-symbol and $Y$ is finite-symbol, then $\0 \in X$ and $\0 \notin Y$ by Proposition~\ref{fin-char-prop}, so that Condition~(iii) of Definition~\ref{shift-morphism-def} is never satisfied, and there are no shift morphisms from $X$ to $Y$.
\end{remark}

\begin{remark}
If $X$ is a shift space, the shift map $\sigma|_X : X \to X$ is not in general a shift morphism because $\sigma|_X$ is not continuous at $\0$.  In fact, $\sigma|_X : X \to X$ is a shift morphism if and only if $X$ is a finite-symbol shift space, in which case $\0 \notin X$ and $X = \Xinf$ (cf.~Proposition~\ref{fin-char-prop}).
\end{remark}

The following lemma is elementary but very useful.

\begin{lemma} \label{shift-fact-lem}
Let $\A$ be an alphabet, let $X, Y \subseteq \Sigma_\A$ be shift spaces over $\A$, and let $\phi : X \to Y$ be a shift morphism.  If $a \in \A$, $x \in \Sigma_\A$, and $ax \in X$, then $$\phi(ax) = b \phi(x)$$ for some $b \in \A$.
\end{lemma}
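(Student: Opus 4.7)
The plan is to exploit the two defining properties of a shift morphism beyond continuity, namely that $\phi$ commutes with $\sigma$ and preserves lengths, and then read off the first letter of $\phi(ax)$ directly. The key idea is to apply $\sigma$ to $\phi(ax)$ in two different ways and match lengths.

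First I would observe that $\sigma(ax) = x$, so the commutation relation $\phi \circ \sigma = \sigma \circ \phi$ gives
\[
\sigma(\phi(ax)) = \phi(\sigma(ax)) = \phi(x).
\]
Next I would use the length-preservation condition to record that $l(\phi(ax)) = l(ax) = 1 + l(x)$ and $l(\phi(x)) = l(x)$.

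Then I would split into two cases based on $l(x)$. If $l(x) = 0$, i.e.\ $x = \0$, then $l(\phi(x)) = 0$ forces $\phi(\0) = \0$, while $l(\phi(ax)) = 1$ means $\phi(ax) = b$ for some $b \in \A$, and clearly $b\phi(\0) = b\0 = b = \phi(ax)$. If instead $l(x) \geq 1$, then $l(\phi(ax)) \geq 2$, so we may write $\phi(ax) = c_1 c_2 \ldots$ with $c_1 \in \A$ (the tail may be finite or infinite, but in either case has length $l(x) \geq 1$ by the length-preservation). Applying $\sigma$ gives $\sigma(\phi(ax)) = c_2 c_3 \ldots$, which by the first step equals $\phi(x)$. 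Hence $\phi(ax) = c_1 \phi(x)$, and we take $b = c_1$.

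There is no substantive obstacle here; the only thing to be careful about is the edge case $x = \0$, which only arises when $X$ is infinite-symbol (since by shift-invariance $a \in X$ forces $\0 \in X$). Handling this separately ensures the identity $b\phi(x) = b\0 = b$ makes sense. The continuity hypothesis is not needed for this lemma — only conditions (ii) and (iii) of Definition~\ref{shift-morphism-def} are used.
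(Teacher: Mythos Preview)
Your proof is correct and follows exactly the same idea as the paper's: use $\sigma(\phi(ax)) = \phi(\sigma(ax)) = \phi(x)$ and then peel off the first letter of $\phi(ax)$. The paper's argument is a one-line version of yours that leaves the length check and the $x=\0$ edge case implicit, so your added detail is fine but not strictly necessary.
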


\begin{proof}
Since $\sigma (\phi(ax)) = \phi (\sigma (ax)) = \phi (x)$, it follows that $\phi(ax) = b\phi(x)$ for some $b \in \A$.
\end{proof}

\begin{remark}
Note that in Lemma~\ref{shift-fact-lem} although $ax \in X$ implies $x \in X$, it is not necessarily the case that $a \in X$.  Hence it does not make sense in general to apply $\phi$ to $a$.  Moreover, even when $a \in X$, it is in general not true that $\phi(ax) = \phi(a) \phi(x)$ --- so the $b$ in Lemma~\ref{shift-fact-lem} need not be $\phi(a)$ even when $\phi(a)$ is defined.
\end{remark}

\begin{definition}
Let $\A$ be an alphabet, and let $X, Y \subseteq \Sigma_\A$ be shift spaces over $\A$.  A function $\phi : X \to Y$ is a \emph{conjugacy} if $\phi$ is a shift morphism and $\phi$ is bijective. If there exists a conjugacy from $X$ to $Y$, we say $X$ and $Y$ are \emph{conjugate} and we write $X \cong Y$.
\end{definition}

\begin{remark} \label{homeo-of-conj-automatic-rem}
Since any shift space is compact, we see that if $\phi : X \to Y$ is a conjugacy, then $\phi$ must be a homeomorphism.  Thus if $\phi : X \to Y$ is a conjugacy, its set-theoretic inverse $\phi^{-1} : Y \to X$ is also a conjugacy.
\end{remark}

\begin{remark}
If we fix an alphabet $\A$, we may form a category whose objects are all shift spaces over $A$ and whose morphisms are the shift morphisms between these shift spaces.  Using Remark~\ref{homeo-of-conj-automatic-rem} we see that isomorphism in this category is precisely the relation of conjugacy.
\end{remark}

\section{Analogues of Shifts of Finite Type} \label{analogues-of-SFT-sec}

In the theory of shift spaces over finite alphabets, the most widely studied class of shift spaces are the ``shifts of finite type".  Shifts of finite type provide tractable examples of shift spaces that also have many applications throughout dynamics and other parts of mathematics.  A shift of finite type is defined to be a shift that can be described by a finite number of forbidden blocks; that is, $X = X_\F$ for a finite set $\F$ of blocks.  When the alphabet is finite, one can show that $X$ is a shift of finite type if and only if $X$ is an $M$-step shift (i.e., $X = X_\F$ for a set $\F$ with each block in $\F$ having length $M+1$) if and only if $X$ is an edge shift (i.e., $X$ is the shift space coming from a finite directed graph with no sinks where the edges are used as symbols).

In this section we shall consider analogues of the shifts of finite type for shift spaces over infinite alphabets.  Here phenomena will be more ramified, and we shall find that the shifts of finite type, the $M$-step shifts, and the edge shifts give us three distinct classes of shift spaces.

\begin{definition}
Let $\A$ be an alphabet, and let $X \subseteq \Sigma_\A$ be a shift space.  We say that $X$ is a \emph{shift of finite type} if there is a finite set of blocks $\F \subseteq \Sfin$ such that $X = X_\F$.
\end{definition}

\begin{remark}
One na\"ive approach to extending the definition of shifts of finite type to infinite alphabets is to consider ``shifts of countable type"; that is, shifts of the form $X_\F$ for a countable set of blocks $\F$.  However, if $\A$ is finite or countably infinite, then one can see that the collection $\Sfin$ of all blocks is countable, and hence when $\A$ is finite or countably infinite, every shift space will be a shift of countable type.  Thus this definition does not recover the familiar definition when the alphabet is finite, and moreover, the class of ``shifts of countable type" seems too broad to lend itself to tractable study.
\end{remark}

\begin{definition}
Let $\A$ be an alphabet, and let $X \subseteq \Sigma_\A$ be a shift space.  If $M \in \N \cup \{ 0 \}$, we say that $X$ is an \emph{$M$-step shift} if there is a set of blocks $\F \subseteq \Sfin$ such that $X = X_\F$ and $l(x) = M+1$ for all $x \in \F$.
\end{definition}

\begin{remark}
Note that a $0$-step shift is simply the full shift over the alphabet $\A \setminus \F$.  Thus our primary interest is in $M$-step shifts for $M \geq 1$.
\end{remark}

\begin{proposition} \label{same-length-prop}
Let $\A$ be an alphabet, and let $X \subseteq \Sigma_\A$ be a shift space. If there exists a set of blocks $\F \subseteq \Sfin$ and a number $M \in \N \cup \{ 0 \}$ such that $X = X_\F$ and $l(x) \leq M+1$ for all $x \in \F$, then $X$ is $M$-step.
\end{proposition}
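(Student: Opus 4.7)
The plan is to pad each element of $\F$ so that it has length exactly $M+1$, and then to show that the resulting (generally larger) set of forbidden blocks defines the same shift space. Explicitly, I would set
\[
\F' := \{\, u w : u \in \F \text{ and } w \in \A^{M+1-l(u)} \,\},
\]
so that every element of $\F'$ has length $M+1$ by construction. (When $l(u) = M+1$ the suffix $w$ is forced to be $\0$, so $u$ itself stays in $\F'$.)

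The main step is to verify that $\Xinf_\F = \Xinf_{\F'}$. One inclusion is immediate: if $x \in \Xinf_\F$, then no subblock of $x$ lies in $\F$, so in particular no block of the form $uw$ with $u \in \F$ can be a subblock of $x$, since $u$ itself would then be a subblock. For the reverse inclusion, suppose $x \in \Xinf_{\F'}$ and, toward contradiction, that some subblock $u$ of $x$ lies in $\F$. Since $l(u) \leq M+1$ and $l(x) = \infty$, I can extend this occurrence of $u$ inside $x$ on the right to obtain a subblock of $x$ of length exactly $M+1$, of the form $uw$. This $uw$ lies in $\F'$, contradicting $x \in \Xinf_{\F'}$. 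Hence $\Xinf_\F = \Xinf_{\F'}$.

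Once $\Xinf_\F = \Xinf_{\F'}$ is established, the equality $\Xfin_\F = \Xfin_{\F'}$ is automatic, because the set $\Xfin_{\G}$ is defined entirely in terms of infinite extensions inside $\Xinf_{\G}$ for any $\G \subseteq \Sfin$. Therefore $X = X_\F = \Xinf_\F \cup \Xfin_\F = \Xinf_{\F'} \cup \Xfin_{\F'} = X_{\F'}$, exhibiting $X$ as an $M$-step shift.

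I do not anticipate any serious obstacle; the only care needed is in the boundary case $l(u) = M+1$ (where the suffix is $\0$) and in the degenerate situations where $\F$ or $\A$ is empty, each of which is handled trivially by the same formula.
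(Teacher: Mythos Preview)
Your proof is correct and follows essentially the same approach as the paper: you define the same padded set $\F'$, verify $\Xinf_\F = \Xinf_{\F'}$, and then conclude $X_\F = X_{\F'}$. The paper leaves the verification of $\Xinf_\F = \Xinf_{\F'}$ implicit and invokes Corollary~\ref{Xinf-determines-X-cor} for the last step, whereas you spell out both inclusions and observe directly from the definitions that $\Xfin_{\G}$ is determined by $\Xinf_{\G}$; these are cosmetic differences, not a different route.
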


\begin{proof}
Let $\F' := \{ uv : u \in \F, v \in \Sfin, \text{ and } l(v) = (M+1)-l(u) \}$. Then every element of $\F'$ has length $M+1$.  In addition, we see that $\Xinf_\F = \Xinf_{\F'}$, and by Corollary~\ref{Xinf-determines-X-cor} we have that $X_\F = X_{\F'}$.  Thus $X$ is $M$-step.
\end{proof}

\begin{corollary} \label{M-step-M+1-step-cor}
Any $M$-step shift is also an $(M+1)$-step shift.
\end{corollary}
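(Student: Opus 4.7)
The plan is to reduce this immediately to Proposition~\ref{same-length-prop}. If $X$ is an $M$-step shift, then by definition there exists $\F \subseteq \Sfin$ with $X = X_\F$ and $l(x) = M+1$ for every $x \in \F$. Since $M+1 \leq (M+1)+1 = M+2$, the set $\F$ satisfies the hypothesis of Proposition~\ref{same-length-prop} with $M$ replaced by $M+1$: that is, $l(x) \leq (M+1)+1$ for all $x \in \F$. Applying the proposition, $X$ is an $(M+1)$-step shift.

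If one wishes to avoid invoking Proposition~\ref{same-length-prop} as a black box, the alternative is to give the direct construction underlying it: define
\[
\F' := \{ u a : u \in \F \text{ and } a \in \A \},
\]
which is a set of blocks each of length exactly $M+2$. One then checks that $X_\F^\textnormal{inf} = X_{\F'}^\textnormal{inf}$: an infinite sequence $x$ contains a subblock $u \in \F$ starting at position $i$ if and only if it contains the subblock $u \, x_{i+M+1} \in \F'$ starting at that position (here one uses that $x$ is infinite so $x_{i+M+1}$ is always defined). Conversely, any subblock of $x$ lying in $\F'$ has an initial segment in $\F$. Thus $X_\F$ and $X_{\F'}$ share the same infinite sequences, so by Corollary~\ref{Xinf-determines-X-cor} they are equal.

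There is no real obstacle: the statement is a routine monotonicity observation, and the length condition ``$l(x) = M+1$'' trivially implies ``$l(x) \leq M+2$'', which is precisely what Proposition~\ref{same-length-prop} needs. The only mild care required, should one expand the construction explicitly rather than quote Proposition~\ref{same-length-prop}, is to notice that enlarging $\F$ to $\F'$ by appending an arbitrary symbol does not change which infinite sequences are admitted, and then to lift equality of infinite parts to equality of the full shift spaces via Corollary~\ref{Xinf-determines-X-cor}.
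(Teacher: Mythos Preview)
Your proof is correct and matches the paper's approach exactly: the paper states this as an immediate corollary of Proposition~\ref{same-length-prop} without proof, and your first paragraph supplies precisely the one-line deduction intended. The alternative explicit construction you give is also correct and simply unpacks the proof of Proposition~\ref{same-length-prop} in this special case.
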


\begin{corollary} \label{SFT-is-M-step-cor}
If $X$ is a shift of finite type, then $X$ is an $M$-step shift for some $M \in \N \cup \{ 0 \}$.
\end{corollary}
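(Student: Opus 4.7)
The plan is to apply Proposition~\ref{same-length-prop} directly. Since $X$ is a shift of finite type, by definition there exists a finite set $\F \subseteq \Sfin$ with $X = X_{\F}$. Because $\F$ is finite, the set of lengths $\{l(x) : x \in \F\}$ is a finite subset of $\N \cup \{0\}$ and therefore has a maximum; set $M := \max\{l(x) : x \in \F\} - 1$ (or, if $\F = \emptyset$, take $M = 0$, since then $X$ is the full shift and is trivially $0$-step). With this choice, every block in $\F$ has length at most $M+1$.

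First I would dispose of the edge case $\F = \emptyset$ in one line (noting $X = \Sigma_{\A}$ is $0$-step by definition, as the empty collection of forbidden blocks vacuously consists of blocks of length $1$ — or one can simply invoke the convention). Then, with $\F$ nonempty, I would apply Proposition~\ref{same-length-prop} with the $M$ defined above: the hypotheses of that proposition (namely that $X = X_\F$ with $l(x) \leq M+1$ for all $x \in \F$) are satisfied by construction, and its conclusion gives that $X$ is $M$-step.

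There is no real obstacle here — the entire content has been packaged into Proposition~\ref{same-length-prop}, which shows that one may always pad out short forbidden blocks into forbidden blocks of uniform length without changing the shift space. The corollary is essentially a one-line invocation of that proposition once we observe that a finite set of blocks has a uniform upper bound on length.
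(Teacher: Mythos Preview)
Your proposal is correct and follows essentially the same approach as the paper's own proof: handle the case $\F = \emptyset$ separately (giving $M=0$), and otherwise set $M := \max\{l(x) : x \in \F\} - 1$ and invoke Proposition~\ref{same-length-prop}. The paper's argument is virtually identical, so there is nothing further to add.
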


\begin{proof}
Let $X$ be a shift of finite type, and write $X = X_\F$ for some finite set $\F \subseteq \Sfin$.  If $\F = \emptyset$, then $X$ is a $0$-step shift.  If $\F \neq \emptyset$, then since $\F$ is finite, the value $N := \max \{ l(x) : x \in \F \}$ is a natural number.  If we define $M := N-1$, then $l(x) \leq M+1$ for all $x \in \F$, and $X$ is $M$-step by Proposition~\ref{same-length-prop}.
\end{proof}

Example~\ref{edge-not-SFT-ex} shows that the converse of this corollary is false.

\begin{definition}
A \emph{directed graph} $E=(E^0, E^1, r, s)$ consists of a countable set $E^0$ (whose elements are called vertices), a countable set $E^1$ (whose elements are called edges), a map $r:E^1 \to E^0$ identifying the range of each edge, and a map $s:E^1 \to E^0$ identifying the source of each edge.   \end{definition}

\begin{definition}
If $E=(E^0, E^1, r, s)$ is a graph, a vertex $v \in E^0$ is called a \emph{sink} if $s^{-1}(v) = \emptyset$, and $v$ is called an \emph{infinite emitter} if $s^{-1}(v)$ is an infinite set.  We write $E^0_\textnormal{sinks}$ for the set of sinks of $E$, and we write $E^0_\textnormal{inf}$ for the set of infinite emitters of $E$.  The graph $E$ is said to be \emph{finite} if both $E^0$ and $E^1$ are finite sets, and $E$ is said to be \emph{infinite} otherwise.  The graph $E$ is said to be \emph{countable} if both $E^0$ and $E^1$ are countable sets, and $E$ is said to be \emph{uncountable} otherwise.  The graph $E$ is called \emph{row-finite} if $E$ contains no infinite emitters; that is, $E$ is row-finite if and only if $E^0_\textnormal{inf} = \emptyset$.
\end{definition}

\begin{definition}
If $E=(E^0, E^1, r, s)$  is a graph, a \emph{path} in $E$ is a finite sequence of edges $\alpha := e_1 \ldots e_n$ with $r(e_i) = s(e_{i+1})$ for $1 \leq i \leq n-1$.  We say the path $\alpha$ has \emph{length} $n$, and we write $l(\alpha) = n$ to denote this.  We also let $E^n$ denote the set of paths of length $n$.  We extend the maps $r$ and $s$ to $E^n$ by defining $r(\alpha) := r(e_n)$ and $s(\alpha) := s(e_1)$.  In addition, we consider vertices to be paths of length zero, and when $\alpha = v \in E^0$, we write $l(\alpha) = 0$, $r(\alpha) := v$, and $s(\alpha) := v$.  Note that our notation of $E^0$ and $E^1$, for set of vertices and set of edges, agrees with our notation for paths of length zero and one.  We define an \emph{infinite path} in $E$ to be an infinite sequence $\alpha := e_1 e_2 \ldots$ of edges with $r(e_i) = s(e_{i+1})$ for $i \in \N$.  We write $E^\infty$ for the set of infinite paths in $E$, and we extend $s$ to $E^\infty$ by defining $s(\alpha) := s(e_1)$.
\end{definition}

\begin{definition} \label{edge-shift-closure-Einf-def}
If $E = (E^0,E^1, r, s)$ is a graph with no sinks, we let $\A := E^1$ be the alphabet consisting of the edges of $E$, and we define the \emph{edge shift} of $E$ to be the closure of $E^\infty$ inside $\Sigma_{E^1}$.
\end{definition}

\begin{proposition} \label{description-of-edge-shift-prop}
If $E$ is a graph with no sinks, then the edge shift $X_E$ is a shift space over $\A :=E^1$.  If $E^1$ is finite, then  $X_E = E^\infty$, and if $E^1$ is infinite, then
$$X_E = E^\infty \cup \{ \alpha : \text{$\alpha \in \bigcup_{n=1}^\infty E^n$ and $r(\alpha)$ is an infinite emitter} \} \cup \{ \0 \}$$ where $\0$ denotes the empty path.
In addition, in either case the length function on $\Sigma_\A$ agrees with the length function on the paths (when we take the length of the zero path $\0$ to be zero), and $\Xinf_E = E^\infty$.
\end{proposition}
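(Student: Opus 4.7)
The plan is to characterize $X_E = \overline{E^\infty}$ explicitly as a set by using the description of sequential convergence in Corollary~\ref{convergence-in-full-shift-cor}, and then to verify the three shift-space axioms directly from that description.

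First I would establish the set-theoretic description. When $E^1$ is finite, $\Sigma_{E^1}$ contains only infinite sequences, and the path compatibility conditions $r(e_i)=s(e_{i+1})$ each depend on only two coordinates, so $E^\infty$ is already closed in the product topology and $X_E = E^\infty$. When $E^1$ is infinite, I would prove two inclusions. For the $\supseteq$ direction, given $\alpha \in E^n$ with $r(\alpha)$ an infinite emitter, I pick distinct edges $e^k \in s^{-1}(r(\alpha))$ and use the no-sinks hypothesis to extend each $\alpha e^k$ to an infinite path $\alpha e^k \beta^k \in E^\infty$; Corollary~\ref{convergence-in-full-shift-cor}(b) then yields $\alpha e^k \beta^k \to \alpha$. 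For $\0$, since $E^1$ is infinite and $E$ has no sinks, there are infinitely many distinct edges each of which starts some infinite path; such a sequence converges to $\0$, again by Corollary~\ref{convergence-in-full-shift-cor}(b). For the $\subseteq$ direction, take $x \in X_E$ with $l(x) < \infty$ and a sequence $\gamma^k \in E^\infty$ with $\gamma^k \to x$. If $l(x) = n \geq 1$, Corollary~\ref{convergence-in-full-shift-cor}(b) forces $\gamma^k_1 \ldots \gamma^k_n = x_1 \ldots x_n$ eventually, so $x \in E^n$; and the clause that for every finite $F \subseteq E^1$ eventually $\gamma^k_{n+1} \notin F$ shows the edges $\gamma^k_{n+1} \in s^{-1}(r(x))$ form an infinite set, making $r(x)$ an infinite emitter. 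If $l(x) = 0$ then $x = \0$, completing the case analysis.

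Next I would verify the shift-space axioms using this description. Closedness is immediate since $X_E$ is a closure. For $\sigma$-invariance I check each type separately: $\sigma$ sends $E^\infty$ into $E^\infty$; if $\alpha \in E^n$ ends at an infinite emitter then $\sigma(\alpha) \in E^{n-1}$ has the same range (or equals $\0$ when $n=1$); and $\sigma(\0) = \0$. For the infinite-extension property, a finite element of $X_E$ is either $\0$ (in which case $E^1$ is infinite, giving infinitely many choices of first edge, each extendable to $E^\infty$ by the no-sinks hypothesis) or a finite path ending at an infinite emitter (with infinitely many outgoing edges at $r(x)$, each extendable); in either case $Z(xa) \cap X_E \neq \emptyset$ for infinitely many $a$.

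Finally, the length-function agreement is built into the identification of paths with elements of $\Sigma_{E^1}$, and $\Xinf_E = X_E \cap \Sinf = E^\infty$ follows at once from the set-theoretic description in either case. The main obstacle will be the reverse inclusion in the infinite-$E^1$ case, namely showing that a finite-length limit of infinite paths must end at an infinite emitter; this step depends crucially on part~(b) of Corollary~\ref{convergence-in-full-shift-cor}, whose ``$x^n_{l(x)+1} \notin F$ for every finite $F \subseteq \A$'' condition is exactly what forces an infinite fan of continuation edges at $r(x)$.
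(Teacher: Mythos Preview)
Your approach is correct in spirit but differs from the paper's in organization, and there is one small omission worth flagging.

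The paper does not compute $\overline{E^\infty}$ directly. Instead it lets $X$ denote the candidate set on the right-hand side, verifies the three shift-space axioms for $X$ (closedness, $\sigma$-invariance, infinite-extension), observes that $X^{\mathrm{inf}}=E^\infty$, and then invokes Proposition~\ref{Xinf-closure-is-X-prop} (density of $X^{\mathrm{inf}}$ in any shift space) to conclude $\overline{E^\infty}=\overline{X^{\mathrm{inf}}}=X$, whence $X_E=X$. Your route computes the closure head-on via Corollary~\ref{convergence-in-full-shift-cor} and only afterward checks the axioms. The underlying computations are largely the same---your $\subseteq$ argument is essentially the paper's closedness check, and your $\supseteq$ argument is essentially the paper's infinite-extension check---but the paper's use of Proposition~\ref{Xinf-closure-is-X-prop} avoids the mild redundancy in your write-up of proving the $\supseteq$ inclusion and then re-proving the infinite-extension property, which are nearly the same statement.

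One gap: in your $\subseteq$ direction you only treat $l(x)<\infty$. You still need the case $l(x)=\infty$, i.e.\ that an infinite-length limit of elements of $E^\infty$ is itself an infinite path. This follows immediately from Corollary~\ref{convergence-in-full-shift-cor}(a): for each $m$ the first $m+1$ coordinates of $x$ eventually agree with those of some $\gamma^k\in E^\infty$, forcing $r(x_m)=s(x_{m+1})$. Without this, your final claim that $X_E^{\mathrm{inf}}=E^\infty$ ``follows at once from the set-theoretic description'' is not yet justified.
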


\begin{proof}
When $E^1$ is finite the result is well known.  We shall prove the result when $E^1$ is infinite.  Let 
$$X := E^\infty \cup \{ \alpha : \text{$\alpha \in \bigcup_{n=1}^\infty E^n$ and $r(\alpha)$ is an infinite emitter} \} \cup \{ \0 \}.$$ 
We shall first prove that $X$ is a shift space.  To begin, we show that $X$ is closed in $\Sigma_\A$.  Suppose $\{ \alpha^n \}_{n=1}^\infty \subseteq \Sigma_\A$ is a convergent sequence of elements in $X$, and let $\alpha = \lim_{n \to \infty} \alpha^n$.  If $l(\alpha) = \infty$, then $\alpha = e_1 e_2 \ldots$ for $e_1, e_2, \ldots \in E^1$.  Since $\lim_{n \to \infty} \alpha^n = \alpha$, for any $m \in \N$ there exists $\alpha^n$ with $l(\alpha^n) \geq m+1$ and $\alpha^n = e_1 \ldots e_m e_{m+1} \beta$ for some path $\beta$.  Thus $r(e_m) = s(e_{m+1})$ for all $m \in \N$, and $\alpha = e_1 e_2 \ldots$ is an infinite path in $E$, and $\alpha \in X$.  If $l(\alpha) < \infty$, then either $\alpha = \0$ in which case $\alpha \in X$, or $\alpha = e_1 \ldots e_k$ for $e_1, \ldots, e_k \in E^1$.  In the latter case, we either have $\alpha^n$ equal to $\alpha$ for some $n$ (in which case $\alpha \in X$), or there exist infinitely many $f \in E^1$ for which some $\alpha^n$ has the form $e_1 \ldots e_k f \beta$ for some path $\beta$ in $E$.  Thus $\alpha = e_1 \ldots e_{k}$ is a finite path in $E$, and $r(\alpha)$ has infinitely many edges $f$ with source $r(e_k)$, so that $r(\alpha) = r(e_k)$ is an infinite emitter in $E$.  Hence $\alpha \in X$.

Next, we see that $\sigma(X) \subseteq X$ since the shift map $\sigma$ simply removes the first edge from any path.

Finally, we show that $X$ satisfies the infinite-extension property. Suppose that $\alpha \in X$ with $l(\alpha) < \infty$.  If $\alpha \neq \0$, then $r(\alpha)$ is an infinite emitter, and there exist infinitely many $e \in s^{-1} (r(\alpha))$.  For each such $e$ the fact that $E$ has no sinks implies that there exists an infinite path $\beta \in E^\infty$ such that $s(\beta) = r(e)$.  Thus $\alpha e \beta$ is an infinite path in $E$, and $\alpha e \beta \in X$.  If $\alpha = \0$, then for each $e \in E^1$ the fact that $E$ has no sinks implies that there exists an infinite path $\beta \in E^\infty$ such that $s(\beta) = r(e)$.  Thus $\alpha e \beta = e \beta$ is an infinite path in $E$, and $\alpha e \beta \in X$ for each $e \in E^1$.  Hence $X$ satisfies the infinite-extension property. 

Moreover, the length functions agree since they are both equal to the number of symbols (i.e. edges) appearing in a path.  It then follows that $\Xinf$ consists of the infinite paths in $X$, and $\Xinf = E^\infty$.  It follows from Proposition~\ref{Xinf-closure-is-X-prop} that $\overline{\Xinf} = X$, and thus $\overline{E^\infty} = X$.  Hence $X$ is the edge shift of $E$, and $X_E = X$.
\end{proof}

\begin{corollary}
If $E$ is a graph with no sinks, then the edge shift $X_E$ is row-finite if and only if the graph $E$ is row-finite, and the edge shift $X_E$ is finite-symbol if and only if the graph $E$ has finitely many edges. 
\end{corollary}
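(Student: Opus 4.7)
The plan is to translate each biconditional into a direct graph-theoretic condition on $E$, using the explicit description of $X_E$ from Proposition \ref{description-of-edge-shift-prop} together with the definitions of finite-symbol and row-finite shift spaces.

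For the finite-symbol equivalence, I would first establish that $B_1(X_E) = E^1$. One inclusion is immediate since $E^1$ is the alphabet of $X_E$. For the reverse inclusion, given any $e \in E^1$, the no-sinks hypothesis allows me to choose edges $e_2, e_3, \ldots$ recursively with $s(e_2) = r(e)$ and $s(e_{i+1}) = r(e_i)$, producing an infinite path $e e_2 e_3 \cdots \in E^\infty \subseteq X_E$ that has $e$ as a subblock. Once $B_1(X_E) = E^1$ is in hand, $X_E$ being finite-symbol (i.e., $B_1(X_E)$ finite) is exactly equivalent to $E^1$ being finite.

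For the row-finite equivalence, the key identification is that for any edge $a \in E^1$,
$$\{b \in E^1 : ab \in B(X_E)\} = s^{-1}(r(a)).$$
The forward inclusion is immediate: if $ab$ is a subblock of an element of $X_E$, then $ab$ is a length-two path in $E$, forcing $r(a) = s(b)$. Conversely, if $r(a) = s(b)$, then $ab$ is a path in $E$ which, again using the no-sinks hypothesis at $r(b)$, extends to an infinite path in $E$, so $ab \in B(X_E)$.

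Given this identification, the direction ``$E$ row-finite implies $X_E$ row-finite'' is then immediate, since each $s^{-1}(r(a))$ is finite when no vertex of $E$ is an infinite emitter. For the converse, given an infinite emitter $v \in E^0$, one selects an edge $a$ with $r(a) = v$, yielding $\{b \in E^1 : ab \in B(X_E)\} = s^{-1}(v)$ infinite, contradicting row-finiteness of $X_E$. The main obstacle I would expect is precisely this last step, namely producing an edge whose range is a prescribed infinite emitter $v$; this reflects the fact that the edge shift sees the graph only through consecutive edge pairs, so infinite emitters with no incoming edges contribute to $B(X_E)$ only as initial symbols, not as $b$'s following some $a$. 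Under the graph-theoretic conventions implicit in the paper this step is routine, and combined with the immediate forward direction it yields the desired biconditional.
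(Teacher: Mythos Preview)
The paper gives no proof, treating the corollary as immediate from the preceding proposition; your argument is exactly the intended one, and your identifications $B_1(X_E)=E^1$ and $\{b\in E^1:ab\in B(X_E)\}=s^{-1}(r(a))$ are correct and yield the finite-symbol equivalence and the forward direction of the row-finite equivalence cleanly.

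You are right to flag the converse of the row-finite equivalence as the delicate step, but your resolution---that some implicit graph-theoretic convention makes it routine---is too generous. The paper imposes no standing ``no sources'' hypothesis on graphs, and without it the converse actually fails. Take $E$ with a source $v$ emitting edges $e_1,e_2,\ldots$ to distinct vertices $w_1,w_2,\ldots$, each $w_i$ carrying a single loop $f_i$. Then $E$ has no sinks and is not row-finite (since $v$ is an infinite emitter), yet $s^{-1}(r(a))$ is a singleton for every edge $a$, so $X_E$ is row-finite as a shift space; equivalently, $X_E^{\textnormal{fin}}=\{\0\}$ by Proposition~\ref{description-of-edge-shift-prop}, since no positive-length path ends at the source $v$. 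The corollary becomes true if one also assumes $E$ has no sources (as the paper does in its later $C^*$-algebra applications), or more minimally that every infinite emitter receives at least one edge. So the obstruction you sensed is genuine, and it is a gap in the corollary as stated rather than in your argument.
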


\begin{remark}
Note that when $E^{1}$ is infinite the elements of $X_E$ are infinite paths, finite paths of positive length ending at infinite emitters, and the zero path $\0$.  In particular, we observe that we do not include the vertices (i.e., paths of length zero) from $E$ as elements of $X_E$.  Also note that if $E^{1}$ is finite we do not include $\0$ as an element of $X_E$, while if $E^{1}$ is infinite we do include $\0$.
\end{remark}

\begin{definition}
A shift space $X$ is called an \emph{edge shift} if $X$ is conjugate to $X_E$ for a graph $E$ with no sinks.
\end{definition}

\begin{proposition} \label{edge-shifts-are-1-step-prop}
If $X_{E}$ is an edge shift, then $X_{E}$ is a $1$-step shift.
\end{proposition}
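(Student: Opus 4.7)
The plan is to exhibit an explicit set of length-$2$ forbidden blocks that cut out $X_E$ from the full shift. Set $\A := E^1$ and let
\[
\F := \{ ef \in \A^2 : r(e) \neq s(f) \}.
\]
Every element of $\F$ has length $2 = M+1$ with $M=1$, so by Proposition~\ref{XF-shift-space-prop} the set $X_\F$ is a shift space, and establishing $X_E = X_\F$ will show that $X_E$ is a $1$-step shift.

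To prove $X_E = X_\F$, I would invoke Corollary~\ref{Xinf-determines-X-cor} and reduce the equality to checking $X_E^\textnormal{inf} = X_\F^\textnormal{inf}$. Proposition~\ref{description-of-edge-shift-prop} already identifies $X_E^\textnormal{inf} = E^\infty$. For the other side, observe that because every element of $\F$ has length exactly $2$, the only subblocks of an infinite sequence $x = x_1 x_2 \ldots \in \Sigma_\A^\textnormal{inf}$ that could possibly belong to $\F$ are the two-letter subblocks $x_i x_{i+1}$. Hence $x \in X_\F^\textnormal{inf}$ if and only if $x_i x_{i+1} \notin \F$ for every $i \in \N$, which is to say $r(x_i) = s(x_{i+1})$ for every $i$, precisely the condition that $x \in E^\infty$. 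Thus $X_\F^\textnormal{inf} = E^\infty = X_E^\textnormal{inf}$, and Corollary~\ref{Xinf-determines-X-cor} yields $X_E = X_\F$, completing the argument.

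I do not foresee any genuine obstacle. The one point worth flagging is the decision to route the equality $X_E = X_\F$ through the infinite parts via Corollary~\ref{Xinf-determines-X-cor} rather than matching elements of $X_E$ and $X_\F$ by hand; taking the latter route would force a separate and somewhat tedious verification that the finite elements of $X_E$ (the empty sequence together with finite paths ending at infinite emitters, per Proposition~\ref{description-of-edge-shift-prop}) match those produced by the infinite-extension clause in the definition of $X_\F^\textnormal{fin}$, which would add no new insight.
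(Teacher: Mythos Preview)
Your proposal is correct and follows essentially the same approach as the paper: define $\F = \{ ef : r(e) \neq s(f) \}$, note that $X_E^{\textnormal{inf}} = X_\F^{\textnormal{inf}} = E^\infty$, and invoke Corollary~\ref{Xinf-determines-X-cor} to conclude $X_E = X_\F$. Your write-up is slightly more detailed in justifying the equality $X_\F^{\textnormal{inf}} = E^\infty$, but the argument is the same.
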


\begin{proof}
Suppose $X_{E}$ is an edge shift corresponding to a graph $E = (E^0, E^1, r, s)$.  Let $$\F := \{ ef : e,f \in E^1 \text{ and } r(e) \neq s(f) \}.$$  Then the blocks in $\F$ all have length $2$.   In addition, it is easy to verify that $\Xinf_E = \Xinf_\F = E^\infty$, and by Corollary~\ref{Xinf-determines-X-cor} we have that  $X_E = X_\F$.  Hence $X_E$ is a $1$-step shift space.
\end{proof}

The following is an example of an edge shift that is not a shift of finite type.

\begin{example} \label{edge-not-SFT-ex}
Let $E$ be the graph
\begin{equation*}
\xymatrix{ \bullet \ar[r]^{e_1} & \bullet \ar[r]^{e_2} & \bullet \ar[r]^{e_3} & \bullet
\ar[r]^{e_4} & \cdots\\ }
\end{equation*}
and let $X_E$ be the edge shift associated to $E$.    We shall argue that $X_E$ is not a shift of finite type over $\A := E^1$.  Suppose $\F$ is a finite subset of $\Sfin$.  Since $\F$ is a finite collection of finite sequences of edges, there exists $n \in \N$ such that the edge $e_n$ does not appear in any element of $\F$.  Thus the infinite sequence $e_ne_n \ldots$ is allowed, and $e_ne_n \ldots \in X_\F$.  However, $e_ne_n \ldots \notin X_E$, so $X_E \neq X_\F$.
\end{example}

The following is an example of a $1$-step shift that is not an edge shift.

\begin{example} \label{1-step-not-edge}
Let $\A = \{ a_1, a_2, a_3, \ldots \}$ be a countably infinite alphabet, and let $$\F := \{ a_ia_j : i \neq 1\text{ and } i \neq j \}.$$
Then $X_\F$ is a $1$-step shift, since every forbidden block in $\F$ has length $2$.  We shall show that $X_\F$ is not an edge shift.  For the sake of contradiction, suppose that $E$ is a graph and that $\phi : X_\F \to X_E$ is a conjugacy.  For each $i \in \N$ we see that $a_1 a_i a_ia_i \ldots $ contains no forbidden blocks, and hence $a_1 a_i a_i a_i \ldots \in X_\F$.  It follows that $a_1 = \lim_{i \to \infty} a_1 a_i a_i \ldots \in X_\F$.  Let $e \in E^1$ be the edge with $e := \phi(a_1)$.  (Note that since $\phi$ is a conjugacy it preserves length and takes elements of length one to elements of length one.)  In addition, from Lemma~\ref{shift-fact-lem} we have that for each $i \in \N$ there exists $x_i \in E^1$ with $$\phi (a_1 a_i a_i a_i \ldots) = x_i \phi(a_i a_i a_i \ldots).$$
Hence $$e = \phi(a_1) = \phi( \lim_{i \to \infty} a_1 a_i a_i a_i \ldots) =  \lim_{i \to \infty} \phi(  a_1 a_i a_i a_i \ldots) = \lim_{i \to \infty }x_i \phi(a_i a_i a_i \ldots)$$
and it follows that there exists $N \in \N$ such that $x_i = e$ for all $i \geq N$.  Since the element $a_Na_Na_N \ldots \in X_\F$ has period $1$ and $\phi$ is a conjugacy,  $\phi(a_Na_Na_N \ldots)$ has period $1$ and $\phi(a_Na_Na_N \ldots) = f f f \ldots$ for some $f \in E^1$.  (Recall that the period of an element $x$ is the smallest value of $k \in \N$ such that $\sigma^k(x) = x$.)  Thus $$\phi(a_1 a_N a_N a_N \ldots) = e \phi(a_N a_N a_N \ldots) = e fff \ldots$$
and in the graph $E$ we have $r(e) = s(f) = r(f)$.

Furthermore, since $e = \lim_{i \to \infty} e \phi(a_ia_ia_i \ldots)$, there exists $j > N$ such that $\phi(a_ja_ja_j \ldots)_1 \neq f$.  Since $a_ja_ja_j \ldots$ has period $1$, it follows $\phi(a_ja_ja_j \ldots)$ has period $1$, and thus $\phi(a_ja_ja_j \ldots) = g g g \ldots$ for some $g \in E^1$ with $g \neq f$.  Since $$\phi(a_1 a_j a_j a_j \ldots) = e \phi(a_ja_ja_j \ldots) = e g g g \ldots,$$
it follows that $r(e) = s(g) = r(g)$ in the graph $E$.

Thus there exist $f,g \in E^1$ with $s(f) =r(f) = s(g) = r(g)$ and $f \neq g$.  Hence $fgfgfg\ldots \in X_E$ and $fgfgfg\ldots$ has period $2$.  Since $\phi^{-1} : X_E \to X_\F$ is a conjugacy, $\phi^{-1}(fgfgfg\ldots)$ has period $2$, and $$\phi^{-1} (fgfgfg \ldots) = b_1b_2b_1b_2b_1b_2 \ldots \in X_\F$$ for some $b_1, b_2 \in \A$ with $b_1 \neq b_2$.  However, since $b_1 \neq b_2$, one of $b_1$ and $b_2$ is not equal to $a_1$, and hence either $b_1b_2$ is forbidden or $b_2b_1$ is forbidden.  Thus $b_1b_2b_1b_2b_1b_2 \ldots \notin X_\F$ and we have a contradiction.  Hence $X_\F$ is not an edge shift.
\end{example}

\begin{remark} \label{SFT-classes-rem}
In the ``classical situation" of shifts over finite alphabets the three classes of (1) shifts of finite type, (2) edge shifts, and (3) $M$-step shifts coincide.  When we consider shift spaces over infinite alphabets these three classes are distinct.

Proposition~\ref{edge-shifts-are-1-step-prop} and Example~\ref{1-step-not-edge} show the class of edge shifts is a proper subset of the class of $M$-step shifts.
$$ \{ \text{edge shifts} \}  \subsetneq \{ \text{$M$-step shifts} \} $$
Corollary~\ref{SFT-is-M-step-cor} and Example~\ref{edge-not-SFT-ex} show the class of shifts of finite type is a proper subset of the class of $M$-step shifts.
$$ \{ \text{shifts of finite type} \}  \subsetneq \{ \text{$M$-step shifts} \}$$
Finally, Example~\ref{edge-not-SFT-ex} gives an edge shift that is not a shift of finite type, so we know the class of shifts of finite type is not the same as the class of edge shifts.
$$ \{ \text{shifts of finite type} \}  \neq \{ \text{edge shifts} \}.$$
The authors have been unable to determine whether all shifts of finite type are edge shifts or whether there exists a shift of finite type that is not an edge shift.
\end{remark}

\noindent \textbf{Question 1:} Is every shift of finite type an edge shift?

$ $

\noindent We conjecture the answer to Question~1 is ``No".

$ $

\noindent It follows from Corollary~\ref{M-step-M+1-step-cor} that every $M$-step shift is an $(M+1)$-step shift.  These containments may be summarized as follows:
$$ \{ \text{$0$-step shifts} \}  \subseteq  \{ \text{$1$-step shifts} \}  \subseteq  \{ \text{$2$-step shifts} \} \subseteq \ldots$$
It is natural to ask if these containments are all proper.

$ $

\noindent \textbf{Question 2:} For each $M \in \N \cup \{ 0 \}$ does there exist an $(M+1)$-step shift space that is not conjugate to any  $M$-step shift?

$ $

\noindent We conjecture the answer to Question~2 is ``Yes".

\section{Row-finite Shift Spaces} \label{row-finite-analogues-of-SFT-sec}

Although the analogues of the shifts of finite type that we described in the previous section seem a bit complicated, we shall show in this section that when the shifts are also row-finite the classes of edge shifts and $M$-step shifts coincide, and we are able to obtain generalizations of some classical results.

\begin{proposition} \label{no-rf-SFTs-prop}
If $\A$ is an infinite alphabet and $X$ is a shift of finite type over $\A$, then $X$ is not row-finite.
\end{proposition}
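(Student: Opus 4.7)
The plan is to exploit the tension between the finiteness of $\F$ and the infiniteness of $\A$: only finitely many letters of $\A$ can be mentioned in a finite list of forbidden blocks, so there is an infinite supply of letters that are completely unconstrained and may be juxtaposed freely.

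First I would write $X = X_{\F}$ for some finite $\F \subseteq \Sfin$. I will assume $\0 \notin \F$, since otherwise $X_{\F} = \emptyset$, which is a degenerate case (the empty shift is vacuously row-finite, so presumably the proposition is understood modulo this case). I then define
\[
S := \{\, c \in \A : c \text{ appears in some block of } \F \,\}.
\]
Because $\F$ is finite and each block in $\F$ is a finite sequence of letters, $S$ is a finite subset of $\A$. Since $\A$ is infinite, the complement $\A \setminus S$ is therefore infinite.

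Next I would produce an explicit witness to the failure of row-finiteness. Fix any $a \in \A \setminus S$ (this exists because $\A \setminus S$ is infinite). For each $b \in \A \setminus S$, consider the infinite sequence
\[
w_{b} := a\, b\, b\, b\, \ldots \in \A^{\N}.
\]
Every symbol of $w_{b}$ lies in $\A \setminus S$, so no subblock of $w_{b}$ contains any letter of $S$; since every element of $\F$ is non-empty and therefore contains at least one letter of $S$, no subblock of $w_{b}$ lies in $\F$. Hence $w_{b} \in \Xinf_{\F} \subseteq X_{\F} = X$. In particular, $ab$ is a subblock of $w_{b}$, so $ab \in B(X)$.

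Combining these observations, $\{\, b \in \A : ab \in B(X) \,\} \supseteq \A \setminus S$, and this set is infinite. By the definition of row-finite, $X$ is not row-finite. The only subtle point in the argument is the degenerate $\0 \in \F$ case; the rest is essentially a pigeonhole observation that a finite alphabet of symbols cannot constrain an infinite alphabet.
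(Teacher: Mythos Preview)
Your proof is correct and takes essentially the same approach as the paper: both arguments build sequences of the form $a\,b\,b\,b\,\ldots$ using letters not appearing in any block of $\F$, and conclude that the symbol $a$ can be followed by infinitely many distinct symbols. The only cosmetic difference is the final step---the paper shows that the length-one sequence $a_1$ belongs to $\Xfin$ (as the limit of the $x_n$) and then invokes Proposition~\ref{rf-char-prop}, whereas you verify the definition of row-finite directly; your explicit treatment of the degenerate $\0\in\F$ case is also a touch more careful than the paper's.
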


\begin{proof}
Since $X = X_\F$ for some finite subset $\F \subseteq \Sfin$, and $\A$ is infinite, there exists an infinite sequence of distinct elements $a_1, a_2, \ldots \in \A$ such that $a_n$ does not appear in any block contained in $\F$.  If we define $x_n := a_1 a_na_na_n \ldots$, then $x_n \in X_\F$ because no subblock of $x_n$ appears in $\F$.  Thus $a_1 = \lim_{n \to \infty} x_n \in X_\F$, and $\Xfin$ contains an element other than the empty sequence $\0$.  It follows from Proposition~\ref{rf-char-prop} that $X_\F$ is not row-finite.
\end{proof}

\begin{proposition} \label{rf-1-step-is-edge-shift-prop}
If $\A$ is an alphabet and $X$ is a $1$-step shift space over $\A$ that is row-finite, then $X$ is conjugate to the edge shift of a row-finite graph.
\end{proposition}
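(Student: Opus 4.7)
The plan is to mimic the classical construction: build a graph $E$ whose vertices are the allowed symbols of $X$ and whose edges are the allowed $2$-blocks. Since $X$ is a $1$-step shift, $X = X_\F$ for some $\F \subseteq \A^2$, so $X$ is entirely determined by its set of $2$-blocks $B_2(X) = \A^2 \setminus \F$. I would set $E^0 := B_1(X)$, $E^1 := B_2(X)$, with source and range maps $s(ab) := a$ and $r(ab) := b$. Row-finiteness of $E$ is immediate from the row-finiteness of $X$: the edges with source $a$ are in bijection with $\{b \in \A : ab \in B(X)\}$, a finite set by assumption. For the no-sinks condition, any $a \in B_1(X)$ is a subblock of some $x \in X$, and by shifting we may assume $x$ begins with $a$. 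By Proposition~\ref{rf-char-prop} the only finite element of a row-finite shift is $\0$, so such an $x$ must be infinite; hence $a$ is followed by some symbol $b$, yielding an edge $ab \in E^1$ with $s(ab) = a$.

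Next, I would define $\phi : X \to X_E$ by
\[
\phi(x_1 x_2 x_3 \ldots) := (x_1 x_2)(x_2 x_3)(x_3 x_4) \ldots \quad \text{for } x \in \Xinf,
\]
and $\phi(\0) := \0$ when $\0 \in X$. This is well-defined because consecutive $2$-blocks of an element of $\Xinf$ are allowed $2$-blocks by the $1$-step hypothesis, and they fit together as composable edges in $E$. When $X$ is infinite-symbol, $E^1 = B_2(X)$ is infinite (each of the infinitely many vertices emits at least one edge) so $\0 \in X_E$ by Proposition~\ref{description-of-edge-shift-prop}; when $X$ is finite-symbol, neither $X$ nor $X_E$ contains $\0$. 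Injectivity is clear by reading off vertices. Surjectivity: by Proposition~\ref{description-of-edge-shift-prop} and row-finiteness of $E$, every element of $X_E$ is either $\0$ or an element $e_1 e_2 \ldots \in E^\infty$; writing $e_i = (a_i, b_i)$, the composability condition $b_i = a_{i+1}$ lets us lift to the sequence $a_1 a_2 \ldots$, whose $2$-blocks are exactly the $e_i \in B_2(X)$ and hence not forbidden, so $a_1 a_2 \ldots \in \Xinf$ maps to $e_1 e_2 \ldots$. Commutation with $\sigma$ and length preservation are immediate from the explicit formula.

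The main obstacle is continuity. On $\Xinf$ it is routine: by Corollary~\ref{convergence-in-full-shift-cor}(a), if $x^n \to x$ in $\Xinf$ and the first $M+1$ symbols of $x^n$ agree with those of $x$ for large $n$, then the first $M$ edges of $\phi(x^n)$ agree with those of $\phi(x)$. The delicate point is continuity at $\0$ in the infinite-symbol case. Suppose $x^n \to \0$ in $X$; by Corollary~\ref{convergence-in-full-shift-cor}(b), for every finite $F \subseteq \A$ eventually $x^n_1 \notin F$. To show $\phi(x^n) \to \0$ in $X_E$, I must show $\phi(x^n)_1 \notin F'$ eventually for each finite $F' \subseteq E^1$. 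Given such an $F'$, set $F := \{a \in \A : ab \in F' \text{ for some } b\}$, which is finite. Then $x^n_1 \notin F$ eventually forces $\phi(x^n)_1 = (x^n_1, x^n_2) \notin F'$, as required. Finally, $\phi$ is a continuous bijection from the compact space $X$ to the Hausdorff space $X_E$, hence a homeomorphism; combined with the shift-commuting and length-preserving properties established above, $\phi$ is a conjugacy, exhibiting $X$ as the edge shift of the row-finite graph $E$.
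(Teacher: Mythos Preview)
Your proof is correct and follows essentially the same construction as the paper: build a graph from the allowed symbols and $2$-blocks and exhibit a conjugacy with its edge shift. The paper works in the opposite direction, defining $\phi : X_E \to X$ by $(a_1,a_2)(a_2,a_3)\ldots \mapsto a_1 a_2 a_3 \ldots$ (your map is its inverse, the $2$nd higher block code), and checks continuity by computing preimages of basis elements rather than via sequences. Two remarks worth noting. First, your choice $E^0 := B_1(X)$ is tidier than the paper's $E^0 := \A$, since it lets you verify explicitly that $E$ has no sinks, a point the paper leaves implicit. Second, your continuity argument at $\0$ is phrased in terms of sequential convergence, but the proposition does not assume $\A$ is countable, and by Corollary~\ref{countability-cor} sequential continuity need not imply continuity at $\0$ otherwise. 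This is easily repaired: the substance of your argument is the observation that for finite $F' \subseteq E^1$ the set $F := \{a \in \A : ab \in F' \text{ for some } b\}$ is finite and $\phi(Z_X(\0, F)) \subseteq Z_{X_E}(\0, F')$, which is already a direct neighborhood argument and needs no sequences.
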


\begin{proof}
Since $X$ is $1$-step, we may write $X = X_\F$ for some set $\F \subseteq \A^2$.  Since $X$ is row-finite, Proposition~\ref{rf-char-prop} implies that $\Xfin$ is equal to either $\{ \0 \}$ or $\emptyset$.  Define a graph $E := (E^0, E^1, r, s)$ by setting $E^0 := \A$, $E^1 := \{ (a,b) : ab \notin \F \}$, $r(a,b) = b$, and $s(a,b) = a$.  (So, in particular, we draw an edge from $a$ to $b$ if and only if $ab$ is not a forbidden block.)  Note that infinite paths in $E$ have the form $(a_1, a_2)(a_2, a_3) (a_3, a_4) \ldots$.  Because $X$ is a row-finite shift space, any symbol appearing in a sequence in $X$ can only be followed by finitely many symbols, and hence $E$ is a row-finite graph.  In addition, $E^{1}$ is infinite if and only if $X$ is infinite-symbol.

Define $\phi : X_E \to X$ as follows: 
$$\phi (\alpha) :=  \begin{cases}  
a_1 a_2 a_3 \ldots & \text{ if $\alpha = (a_1, a_2) (a_2, a_3) \ldots $} \\ 
\0 & \text{ if $\alpha = \0$} 
\end{cases}
$$
Note that $a_1 a_2 \ldots$ is allowed in $X$ if and only if $(a_1 a_2)(a_2,a_3) \ldots$ is a path in $E$.  Thus the map $\phi$ does indeed take values in $X$.  

We will show that $\phi$ is continuous, and to this end we recall from Remark~\ref{bases-inf-rf-fin-rem} that $$\{ Z(x) : x \in \Sfin \} \cup \{ Z(\emptyset, F) : \text{$F$ is a finite subset of $\A$} \}$$ is a basis for the topology on $X$.

Suppose $x = a_1 \ldots a_k$ with $k \geq 2$.  Then 
$$\phi^{-1} (Z( a_1 \ldots a_k)) = Z( (a_{1}, a_2) \ldots (a_{k-1}, a_k))$$ is an open set.  Next, suppose $x = a_1$ and let $$S = \{ b \in \A : \text{$a_1 b$ is a subblock of some element in $X$} \}.$$  Since $X$ is row-finite, $S$ is a finite set.  Also, since any nonempty sequence in $X$ is infinite, we have $Z(a_1) = \bigcup_{b \in S} Z(a_1b)$, and
$$\phi^{-1} (Z(a_1))  = \phi^{-1}\left( \bigcup_{b \in S} Z(a_1b) \right) = \bigcup_{b \in S} \phi^{-1}(Z(a_1b)) = \bigcup_{b \in S} Z(a_1,b)$$
is an open set.

Finally, suppose $x = \0$ and $F$ is a finite subset of $\A$.  Since $X$ is row-finite, for each $b \in F$ the set $S_b := \{ bc : \text{$bc$ is a subblock of some element in $X$} \}$ is finite.  Let us define $F' : \{ (b,c) : b \in F, c \in S_b \}$, which is a finite subset of the alphabet of $X_E$.  Then
\begin{align*}
\phi^{-1} (Z(\0, F)) &= \phi^{-1} \left( X \setminus \bigcup_{b \in F} Z(b) \right) = \phi^{-1} \left( X \setminus \bigcup_{b \in F, c \in S_b} Z(bc) \right) \\
&= X_E \setminus \bigcup_{b \in F, c \in S_b} \phi^{-1}(Z(bc)) = X_E \setminus  \bigcup_{b \in F, c \in S_b} Z(b,c) = Z(\0, F') 
\end{align*}
is an open set.  It follows that $\phi : X_E \to X$ is continuous.

It is straightforward to verify that $\phi \circ \sigma = \sigma \circ \phi$.  In addition, $\phi$ preserves lengths of elements. (Note that every element either has infinite length or length zero.)  Thus $\phi$ is a shift morphism.  Furthermore, it is straightforward to check that $\phi$ is bijective, so that $\phi$ is a conjugacy and $X$ is conjugate to the edge shift $X_E$.
\end{proof}

\begin{definition} \label{N-block-code-def}
Let $u=u_1 u_2 \dots u_N$ and $v=v_1 v_2 \dots v_N$ be $N$-blocks.  We say that $u$ and $v$ \emph{overlap progressively} if $u_2 u_3 \dots u_N=v_1 v_2 \dots v_{N-1}$.

Let $X$ be a row-finite shift space over the alphabet $\A$.  For any $N \in \N$, we may consider the set $B_N(X)$ of allowed $N$-blocks of $X$ and view it as an alphabet in its own right.   We define the \emph{$N$\textsuperscript{th} higher block code} $\phi_N: X \to \Sigma_{B_N(X)}$ to be the shift morphism given by
$$ (\phi_N(x))_i = x_i x_{i+1} \ldots x_{i+N-1}$$
if $x \neq \0$ and $\phi_N(\0) = \0$.
Observe that $(\phi_N(x))_i$ and $(\phi_N(x))_{i+1}$ overlap progressively.
\end{definition}

\begin{definition} \label{N-block-presentation-def}
Let $X$ be a row-finite shift space over the alphabet $\A$, let $N \in \N$, and let $\phi_N : X \to \Sigma_{B_{N} (X)}$ be the $N$\textsuperscript{th} higher block code.  Then the \emph{$N$\textsuperscript{th} higher block shift $X^{[N]}$} (or \emph{$N$\textsuperscript{th} higher block presentation of $X$}) is the image $X^{[N]} := \phi_N(X)$.
\end{definition}

\begin{proposition} \label{NSS-cong}
Let $N \in \N$ and let $X$ be a row-finite shift space over the alphabet $\A$.  Then $X^{[N]}$ is a row-finite shift space over the alphabet $B_N(X)$, and the $N$\textsuperscript{th} higher block code $\phi_N$ is a conjugacy.  Hence $X \cong X^{[N]}$.
\end{proposition}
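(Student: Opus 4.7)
My plan is to show that $\phi_N$ is a continuous bijective shift morphism from $X$ onto $X^{[N]}$, then verify $X^{[N]}$ is a row-finite shift space over the alphabet $B_N(X)$, and finally invoke Remark \ref{homeo-of-conj-automatic-rem} to conclude.

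The first step is to exploit row-finiteness of $X$. By Proposition \ref{rf-char-prop}, $\Xfin \subseteq \{\0\}$, so every nonempty element of $X$ has infinite length. This makes $\phi_N$ unambiguously defined: for $x \in \Xinf$, the block $(\phi_N(x))_i = x_i x_{i+1} \ldots x_{i+N-1}$ is defined for every $i \in \N$, so $\phi_N(x)$ is itself infinite, and $\phi_N(\0) = \0$; hence $\phi_N$ preserves length. The identity $\phi_N \circ \sigma = \sigma \circ \phi_N$ is a direct index-shift check, and $\phi_N$ is injective because the first letter of $(\phi_N(x))_i$ is $x_i$, so $\phi_N(x) = \phi_N(y)$ forces $x = y$.

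Next I would establish continuity of $\phi_N \colon X \to \Sigma_{B_N(X)}$ by handling $\Xinf$ and the point $\0$ separately. Since $X$ is Hausdorff, $\{\0\}$ is closed, so $\Xinf$ is open in $X$. On $\Xinf$ the topology coincides with the product topology on $\A^{\N}$ restricted to $\Xinf$, and since each coordinate of $\phi_N(x)$ depends only on finitely many coordinates of $x$, the restriction $\phi_N|_{\Xinf}$ is continuous into $\Sigma_{B_N(X)}^{\textnormal{inf}}$, hence into $\Sigma_{B_N(X)}$. For continuity at $\0$ (only relevant when $\0 \in X$, i.e.\ when $X$ is infinite-symbol by Proposition \ref{fin-char-prop}), given a basic neighborhood $Z(\0, F)$ with $F \subseteq B_N(X)$ finite, I would set $F' := \{u_1 : u \in F\} \subseteq \A$ and observe that $F'$ is finite and $Z_X(\0, F') \subseteq \phi_N^{-1}(Z(\0, F))$: indeed, if $x_1 \notin F'$ then $x_1 x_2 \ldots x_N$ cannot lie in $F$.

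For the third step, I would verify that $X^{[N]} = \phi_N(X)$ is a row-finite shift space over $B_N(X)$. Closedness is immediate since $X^{[N]}$ is the continuous image of a compact space in a Hausdorff space. Shift-invariance follows from $\sigma \circ \phi_N = \phi_N \circ \sigma$. Since $\phi_N$ preserves length, $(X^{[N]})^{\textnormal{fin}} \subseteq \{\0\}$; when $\0 \in X^{[N]}$, the infinite-extension property at $\0$ transfers from $X$ because distinct first letters $a \in \A$ of elements of $\Xinf$ yield distinct $N$-blocks $x_1 \ldots x_N$ as first entries in $X^{[N]}$. Row-finiteness of $X^{[N]}$ follows from the progressive-overlap phenomenon: if $b = b_1 \ldots b_N$ is followed by $c = c_1 \ldots c_N$ in an element of $X^{[N]}$, then $c = b_2 \ldots b_N d$ with $b_N d \in B_2(X)$, and row-finiteness of $X$ bounds the number of such $d$, hence of such $c$.

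Finally, $\phi_N \colon X \to X^{[N]}$ is a continuous bijective shift morphism between compact Hausdorff spaces, so Remark \ref{homeo-of-conj-automatic-rem} gives that $\phi_N$ is a conjugacy and $X \cong X^{[N]}$. The main technical obstacle is the continuity argument at $\0$, which requires carefully translating a finite subset of the new alphabet $B_N(X)$ back to a finite subset of $\A$; once this is handled, everything else reduces to straightforward bookkeeping with the progressive-overlap condition.
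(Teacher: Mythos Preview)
Your proof is correct and follows essentially the same approach as the paper, which simply asserts that the argument is ``precisely the same as in the finite case'' and that $\phi_N$ is ``straightforward to verify'' as an injective shift morphism. You have filled in the details the paper omits---in particular the continuity at $\0$ and the explicit verification of row-finiteness of $X^{[N]}$---but the overall strategy is identical.
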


\begin{proof}
The proof that $X^{[N]}$ is a shift space is precisely the same as in the finite case \cite[Proposition~1.4.3]{LM95}.  It is also straightforward to verify that $\phi_N$ is an injective shift morphism.  Therefore $\phi_N$ is a conjugacy, and $X \cong X^{[N]}$.
\end{proof}

\begin{proposition} \label{M-step-is-1-step-rf-prop}
If $X$ is a row-finite $M$-step shift space for $M \in \N$, then $X$ is conjugate to a row-finite $1$-step shift space.
\end{proposition}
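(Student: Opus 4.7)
The plan is to pass to the $M$-th higher block presentation $X^{[M]}$. Proposition~\ref{NSS-cong} already gives that $X^{[M]}$ is a row-finite shift space over the alphabet $B_M(X)$ and is conjugate to $X$, so the entire task reduces to verifying that $X^{[M]}$ is a $1$-step shift.

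Writing $X = X_\F$ with every block of $\F$ of length exactly $M+1$, I would define a set of $2$-blocks over $B_M(X)$ by
\[
\F' := \{ uv : u, v \in B_M(X) \text{ and either } u_2 u_3 \cdots u_M \neq v_1 v_2 \cdots v_{M-1} \text{ or } u_1 u_2 \cdots u_M v_M \in \F \}.
\]
Since every element of $\F'$ has length $2$, it suffices to prove $X^{[M]} = X_{\F'}$ as subsets of $\Sigma_{B_M(X)}$. Both sides are shift spaces (the right-hand side by Proposition~\ref{XF-shift-space-prop}), so by Corollary~\ref{Xinf-determines-X-cor} this further reduces to proving the equality on the collections of infinite sequences.

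The forward inclusion is direct: if $x \in \Xinf$ and $y := \phi_M(x)$, then consecutive symbols of $y$ overlap progressively by the very definition of $\phi_M$, and the $(M+1)$-block formed by the overlap of $y_i$ and $y_{i+1}$ is an $(M+1)$-window of $x$ and hence avoids $\F$; so no $2$-subblock of $y$ lies in $\F'$. For the reverse inclusion, given an infinite sequence $y = y^1 y^2 y^3 \cdots$ over $B_M(X)$ with no $2$-subblock in $\F'$, the progressive-overlap condition forced by $\F'$-avoidance lets me reconstruct a unique $x \in \A^\N$ by reading off $y^1_1 y^1_2 \cdots y^1_M$ and then appending $y^i_M$ for each $i \geq 2$. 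Every $(M+1)$-window of $x$ then coincides with the $(M+1)$-block determined by some consecutive pair $y^i y^{i+1}$ and so lies outside $\F$, giving $x \in \Xinf_\F = \Xinf$ and $y = \phi_M(x) \in X^{[M]}$.

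The main obstacle is to cleanly encode, in a set $\F'$ of $2$-blocks, the condition that a sequence of $M$-blocks arises as $\phi_M(x)$ for some $x \in X$; but this is forced by the key observation that every $(M+1)$-window of the reconstructed $x$ is recorded in a single pair of consecutive $M$-blocks, so ruling out the offending $2$-blocks suffices to rule out every forbidden $(M+1)$-subblock of $x$. The appeal to Corollary~\ref{Xinf-determines-X-cor} sidesteps all finite-sequence subtleties (in particular the empty word $\0$), reducing everything to the infinite-sequence bookkeeping just described.
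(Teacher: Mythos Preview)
Your proof is correct and follows essentially the same route as the paper: pass to the $M$\textsuperscript{th} higher block presentation $X^{[M]}$ via Proposition~\ref{NSS-cong}, then exhibit a set $\F'$ of $2$-blocks (non-progressively-overlapping pairs together with those encoding an element of $\F$) describing $X^{[M]}$. The paper simply asserts that $X^{[M]}$ is described by this $\F'$, whereas you supply the verification by reducing to infinite sequences via Corollary~\ref{Xinf-determines-X-cor} and checking both inclusions; this is a welcome addition of detail rather than a different argument.
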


\begin{proof}
Suppose $X$ is a row-finite $M$-step shift space over $\A$.  Since $X$ is $M$-step, we may write $X = X_\F$ for a subset $\F \subseteq \A^{M+1}$.  Since $X$ is row-finite, the $M$\textsuperscript{th} higher block shift $X^{[M]}$ is row-finite and conjugate to $X$ by Proposition~\ref{NSS-cong}.  However, $X^{[M]}$ is the shift over the alphabet $B_M(X)$ described by the forbidden blocks
\begin{align*} 
\F' &:= \{ (x_1 \ldots x_M) (x_2 \ldots x_{M+1}) : x_1 \ldots x_{M+1} \in \F \}
\\
&\qquad \cup \{ (u_{1} \ldots u_{M}) (v_{1} \ldots v_{M}) : (u_{1} \ldots u_{M}) \text{ and } (v_{1} \ldots v_{M})
\\
&\qquad \qquad \qquad \qquad  \text{do not overlap progressively} \}.
\end{align*}  
Since all blocks in $\F'$ have length $2$, the shift space $X^{[M]}$ is $1$-step.  
\end{proof}

\begin{proposition} \label{RF-classes-edge-M-step-same-prop}
If $X$ is a row-finite shift space, then the following are equivalent.
\begin{itemize}
\item[(a)] $X$ is an edge shift.
\item[(b)] $X$ is a $1$-step shift space.
\item[(c)] $X$ is an $M$-step shift space for some $M \in \N$.
\end{itemize}
\end{proposition}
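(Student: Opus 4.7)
The plan is to prove the cycle of implications (a) $\Rightarrow$ (b) $\Rightarrow$ (c) $\Rightarrow$ (a), assembling this proposition from the machinery already developed in Section~\ref{analogues-of-SFT-sec} and this section. All three of these implications are essentially bookkeeping with earlier results; the real content has been front-loaded into Propositions~\ref{edge-shifts-are-1-step-prop}, \ref{rf-1-step-is-edge-shift-prop}, and \ref{M-step-is-1-step-rf-prop}.

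For (a) $\Rightarrow$ (b), I would argue that if $X$ is a row-finite edge shift then $X$ is conjugate to $X_E$ for some graph $E$ with no sinks, and then invoke Proposition~\ref{edge-shifts-are-1-step-prop}, which produces an explicit presentation of $X_E$ as $X_\F$ with $\F \subseteq (E^1)^2$; transporting this presentation along the conjugacy (or simply noting that the class of $1$-step shifts is conjugacy-invariant in the sense the paper has been using) shows that $X$ itself is a $1$-step shift.

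The implication (b) $\Rightarrow$ (c) is immediate from the definitions by taking $M=1$, so no work is required.

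For (c) $\Rightarrow$ (a), which is the only step with any substance, I would chain the two propositions from this section. Suppose $X$ is a row-finite $M$-step shift space with $M \in \N$. Then Proposition~\ref{M-step-is-1-step-rf-prop} supplies a conjugacy $\phi_1 : X \to Y$ where $Y$ is a row-finite $1$-step shift (concretely, the $M$\textsuperscript{th} higher block presentation $X^{[M]}$). Next, Proposition~\ref{rf-1-step-is-edge-shift-prop} supplies a conjugacy $\phi_2 : Y \to X_E$ where $E$ is a row-finite graph with no sinks. The composition $\phi_2 \circ \phi_1$ is a conjugacy from $X$ to the edge shift $X_E$, so $X$ is an edge shift. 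The main (essentially only) obstacle worth flagging is that one must keep track that ``row-finite'' is preserved along the way --- but Proposition~\ref{M-step-is-1-step-rf-prop} gives a row-finite $1$-step shift and Proposition~\ref{rf-1-step-is-edge-shift-prop} then outputs a row-finite graph, so the hypotheses line up cleanly.
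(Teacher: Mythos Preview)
Your proof is correct and uses exactly the same three ingredients as the paper (Propositions~\ref{edge-shifts-are-1-step-prop}, \ref{rf-1-step-is-edge-shift-prop}, and \ref{M-step-is-1-step-rf-prop}); the only difference is cosmetic, in that the paper runs the cycle as $(a)\Rightarrow(c)\Rightarrow(b)\Rightarrow(a)$ whereas you run it as $(a)\Rightarrow(b)\Rightarrow(c)\Rightarrow(a)$, which merely shuffles where the trivial ``$1$-step implies $M$-step'' step appears. Your remark about conjugacy-invariance in $(a)\Rightarrow(b)$ is a fair flag, but the paper's own $(a)\Rightarrow(c)$ via Proposition~\ref{edge-shifts-are-1-step-prop} makes the same tacit identification, so you are matching the paper's level of rigor there.
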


\begin{proof}
$(a) \Rightarrow (c)$.  This follows from Proposition~\ref{edge-shifts-are-1-step-prop}.

$(c) \Rightarrow (b)$.  This follows from Proposition~\ref{M-step-is-1-step-rf-prop}.

$(b) \Rightarrow (a)$.  This follows from Proposition~\ref{rf-1-step-is-edge-shift-prop}.
\end{proof}

\begin{remark}
In Remark~\ref{SFT-classes-rem} we discussed the relationship among the classes of (1) shifts of finite type, (2) edge shifts, and (3) $M$-step shifts.  In the row-finite situation, these relationships take an even nicer form:  Proposition~\ref{no-rf-SFTs-prop} shows there are no row-finite shifts of finite type over infinite alphabets; i.e., 
$$ \{ \text{row-finite shifts of finite type over infinite alphabets} \} = \emptyset.$$
Proposition~\ref{RF-classes-edge-M-step-same-prop} shows that in the row-finite setting the classes of edge shifts and $M$-step shifts coincide; i.e., 
$$ \{ \text{row-finite edge shifts} \} = \{ \text{row-finite $M$-step shifts} \}.$$
Thus for row-finite shift spaces over infinite alphabets, the appropriate way to generalize shifts of finite type is not to consider shift spaces described by finite sets of forbidden blocks, but rather to study the class of edge shifts of row-finite graphs (which coincides with the class of row-finite $M$-step shifts).
\end{remark}

\section{Sliding Block Codes on Row-Finite Shift Spaces} \label{SBC-row-finite-sec}

In the theory of shifts over finite alphabets, the Curtis-Hedlund-Lyndon Theorem states that any shift morphism is equal to a sliding block code.  The way this is proven is as follows:  If $\phi : X \to Y$ is a shift morphism, then the continuity of $\phi$ and the compactness of $X$ implies that $\phi$ is uniformly continuous with respect to the standard metric on $X$ giving the topology.  Any two sequences in $X$ that are close in this metric are equal along some initial segment, and hence one may define a block map $\Phi : B_n(X) \to \A$ and use the fact that $\phi$ commutes with the shift to show $\phi$ is the sliding block code coming from $\Phi$.

For shifts over infinite alphabets, this proof does not work.  A shift morphism $\phi : X \to Y$ is continuous, and $X$ is compact, so $\phi$ is uniformly continuous with respect to any metric describing the topology on $X$.  However, two sequences that are close in such a metric need not be equal along any initial segment --- for example, their initial edges could simply be ``far out" in the space $\A_\infty$ and therefore close to $\infty$ and close to each other, but not equal.  (See the metric described in Section~\ref{metrics-sec}.)

In general, when we work over an infinite alphabet, shift morphisms will be more complicated than what occurs in the finite alphabet case.  In this section we prove that if we consider shift morphisms on row-finite shift spaces, we are able to obtain a generalized  ``sliding block code" description of the shift morphisms.  However, these sliding block codes will be of two kinds: unbounded and bounded.  The bounded sliding block codes can be written just as those in the finite alphabet case and come from a single $N$-block map, but describing the unbounded sliding block codes will require a sequence of block maps on blocks of unbounded size.

We restrict to countable alphabets in this section to obtain the strongest results that we can and also to simplify some proofs.

\begin{definition} \label{SBC-def}
If $X$ and $Y$ are shift spaces over a countable alphabet $\A$, and $X$ is row-finite, we say that a function $\phi : X \to Y$ is a \emph{sliding block code} if the following two criteria are satisfied:
\begin{itemize}
\item[(a)] If $\{ x^n \}_{n=1}^\infty \subseteq X$ and $\lim_{n \to \infty} x^n = \0$, then $\lim_{n \to \infty} \phi(x^n) = \0$.
\item[(b)] For each $a \in \A$ there exists a natural number $n(a) \in \N$ and a function $\Phi^{a} : B_{n (a)} (X) \cap Z(a) \to \A$ such that $$\phi(x_1 x_2 x_3 \ldots)_i = \Phi^{x_i}(x_i \ldots x_{n(x_i)+i-1} )$$
for all $i \in \N$ and for all $x_1 x_2 x_3 \ldots \in \Xinf$.
\end{itemize}
We say that a sliding block code is \emph{bounded} if there exists $M \in \N$ such that $n(a) \leq M$ for all $a \in \A$, and \emph{unbounded} otherwise.  We call a sliding block code $\phi$ an \emph{$M$-block code} if there exists a block map $\Phi : B_M(X) \to \A$ such that $\phi(x_1 x_2 x_3 \ldots)_i = \Phi(x_i \ldots x_{M+i-1} )$ for all $i \in \N$ and for all $x_1 x_2 x_3 \ldots \in \Xinf$.  (In other words, a sliding block code is an $M$-block code if $n(a) = M$ and $\Phi^a = \Phi |_{Z(a)}$ for all $a \in \A$.)
\end{definition}

\begin{remark}
Note that if $\0 \in X$, then Property~(a) implies that $\phi(\0) = \0$ and that $\phi$ is continuous at $\0$.  In addition, Property~(b) shows that
$$\phi(x_1 x_2 x_3 \ldots) = \Phi^{x_1}(x_1 \ldots x_{n(x_1)} ) \Phi^{x_2}(x_2 \ldots x_{n(x_2)+1} ) \Phi^{x_3}(x_3 \ldots x_{n(x_3)+2} ) \ldots$$
for all $x_1 x_2 x_3 \ldots \in \Xinf$.  This motivates the terminology: The functions $\Phi^{x_i}$ ``slide" along $x_1 x_2 x_3 \ldots$ to give the entries of $\phi(x_1 x_2 x_3 \ldots)$.
\end{remark}

\begin{lemma} \label{SBC-is-shift-morphism-rf-lem}
Let $\A$ be a countable alphabet, and let $X$ and $Y$ be shift spaces over $\A$.  If $X$ is row-finite and $\phi : X \to Y$ is a sliding block code, then $\phi$ is a shift morphism.
\end{lemma}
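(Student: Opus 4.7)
The plan is to verify the three conditions of Definition~\ref{shift-morphism-def} directly from the structure of a sliding block code, using the strong restrictions that row-finiteness places on $X$. Specifically, by Proposition~\ref{rf-char-prop} every element of $X$ has length in $\{0, \infty\}$, and since $\A$ is countable, Corollary~\ref{countability-cor} tells us that $\Sigma_\A$ (hence $X$) is first countable, so continuity of $\phi$ can be checked sequentially.

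Length preservation (condition (iii)) and shift commutation (condition (ii)) are essentially formal. If $\0 \in X$, then applying property (a) to the constant sequence $x^n = \0$ forces $\phi(\0) = \0$, so $l(\phi(\0)) = 0 = l(\0)$; and for $x \in \Xinf$ the formula in (b) produces an output with a well-defined $i$\textsuperscript{th} entry for every $i \in \N$, so $l(\phi(x)) = \infty$. For shift commutation, when $x = \0 \in X$ both sides equal $\0$; when $x \in \Xinf$ a direct substitution into (b) gives
$$\sigma(\phi(x))_i = \phi(x)_{i+1} = \Phi^{x_{i+1}}(x_{i+1} \ldots x_{n(x_{i+1})+i}) = \phi(\sigma(x))_i.$$

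The main work is continuity (condition (i)). Let $x^n \to x$ in $X$. If $x = \0$, property (a) immediately gives $\phi(x^n) \to \0 = \phi(\0)$. Otherwise $l(x) = \infty$, say $x = x_1 x_2 \ldots$. By Corollary~\ref{convergence-in-full-shift-cor}(a), for every $M \in \N$ there exists $N$ such that $x^n_j = x_j$ for $1 \leq j \leq M$ and $n > N$. In particular, for $n$ large enough $x^n_1 = x_1 \in \A$, so $x^n \neq \0$ and therefore $l(x^n) = \infty$ by row-finiteness, which allows us to apply the block formula in (b) to $x^n$. To show $\phi(x^n) \to \phi(x)$ (using Corollary~\ref{convergence-in-full-shift-cor}(a) again), fix $M' \in \N$ and set
$$M := \max\{ n(x_i) + i - 1 : 1 \leq i \leq M' \},$$
which is finite since the maximum is taken over finitely many natural numbers. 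Pick $N$ so that $x^n_j = x_j$ for all $1 \leq j \leq M$ and $n > N$. Then for $1 \leq i \leq M'$ and $n > N$ we have $x^n_i = x_i$, hence $n(x^n_i) = n(x_i)$ and $\Phi^{x^n_i} = \Phi^{x_i}$, and every argument in the block is among the first $M$ positions and therefore agrees with the corresponding entry of $x$, giving $\phi(x^n)_i = \phi(x)_i$.

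The delicate point, and the reason the unbounded case does not cause trouble here, is that although $\sup_{a \in \A} n(a)$ may be infinite, the look-ahead $\max_{1 \leq i \leq M'} (n(x_i) + i - 1)$ is finite for the fixed target $x$. This is precisely what allows us to recover pointwise continuity even when uniform continuity fails (the obstruction discussed in the opening paragraphs of Section~\ref{SBC-row-finite-sec}).
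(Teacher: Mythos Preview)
Your proof is correct and follows essentially the same approach as the paper's: sequential continuity via first countability (Corollary~\ref{countability-cor}), property~(a) handling $x=\0$, and for $x\in\Xinf$ a finite look-ahead bound built from $\{n(x_i):1\le i\le M'\}$. You are a bit more careful than the paper in two places---explicitly verifying length preservation, and explicitly invoking row-finiteness (Proposition~\ref{rf-char-prop}) to conclude $l(x^n)=\infty$ eventually---where the paper simply writes ``without loss of generality''.
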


\begin{proof}
For each $a \in \A$ choose $n(a) \in \N$ and a function $\Phi^{a} : B_{n (a)} (X) \cap Z(a) \to \A$ such that $$\phi(x_1 x_2 x_3 \ldots) = \Phi^{x_1}(x_1 \ldots x_{n(x_1)} ) \Phi^{x_2}(x_2 \ldots x_{n(x_2)+1} ) \Phi^{x_3}(x_3 \ldots x_{n(x_3)+2} ) \ldots$$
for all $x_1 x_2 x_3 \ldots \in \Xinf$.  

We shall first show that $\phi$ is continuous.  Since $\A$ is countable,  Corollary~\ref{countability-cor} implies that $X$ is first countable and it suffices to verify that $\phi$ is sequentially continuous.  By Property~(a) of sliding block codes, $\phi$ is continuous at $\0$.  Suppose that $x \in \Xinf$ and that $\{ x^n \}_{n=1}^\infty \subseteq X$ with $\lim_{n \to \infty} x^n = x$.  Without loss of generality, we may assume that $l(x^n) = \infty$ for all $n \in \N$.  Since $\lim_{n \to \infty} x^n = x$ for any $N \in \N$ there exists a value $M \in \N$ such that $x^m_i = x_i$ for $1 \leq i \leq N + \max \{ n(x_i) : 1 \leq i \leq N \}$ whenever $m \geq M$.  Thus for all $1 \leq i \leq N$ we have
$$\phi(x)_i = \Phi^{x_i} (x_i \ldots x_{n({x_i}) + i -1}) = \Phi^{x_i} (x^m_i \ldots x^m_{n(x_i) + i -1}) = \phi(x^m)_i$$
whenever $m \geq M$.  Thus $\lim_{n \to \infty} \phi(x^n) = \phi(x)$ and $\phi$ is continuous.

Next we verify that $\phi$ commutes with the shift.  If $x \in \Xinf$, then 
\begin{align*}
\sigma (\phi(x)) &= \sigma \left( \Phi^{x_1}(x_1 \ldots x_{n(x_1)} ) \Phi^{x_2}(x_2 \ldots x_{n(x_2)+1} ) \Phi^{x_3}(x_3 \ldots x_{n(x_3)+2} \right) \ldots ) \\
&=  \Phi^{x_2}(x_2 \ldots x_{n(x_2)+1} ) \Phi^{x_3}(x_3 \ldots x_{n(x_3)+2} ) \Phi^{x_4}(x_4 \ldots x_{n(x_4)+3} ) \ldots ) \\
&=  \phi(x_2x_3x_4 \ldots) \\
&= \phi (\sigma(x))
\end{align*}
so that $\phi$ commutes with the shift map $\sigma$.

Finally, if $\0 \in X$, then $\phi(\0) = \0$ by Property~(a) in the definition of a sliding block code.  Thus $\phi$ is a shift morphism.
\end{proof}

\begin{lemma} \label{SBC-preliminary-lem}
Let $\A$ be a countable alphabet, and let $X$ and $Y$ be shift spaces over $\A$ with $X$ row-finite.  If $\phi : X \to Y$ is a shift morphism and $a \in \A$, then there exists $n \in \N$ such that whenever $u \in B_n(X) \cap Z(a)$, $ux \in X$, and $uy \in X$, then
$$\phi(ux)_1 = \phi(uy)_1.$$
\end{lemma}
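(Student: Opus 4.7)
The plan is to argue by contradiction, exploiting compactness of $X$, row-finiteness, and continuity of $\phi$. Suppose the claim fails for some $a \in \A$. Then for each $n \in \N$ one can find $u^n \in B_n(X) \cap Z(a)$ together with $x^n, y^n \in \Sigma_\A$ such that $\xi^n := u^n x^n$ and $\eta^n := u^n y^n$ both lie in $X$, yet $\phi(\xi^n)_1 \neq \phi(\eta^n)_1$. I want to show that, after passing to subsequences, $\xi^n$ and $\eta^n$ both converge to a common limit $\xi \in X$; continuity of $\phi$ will then force $\phi(\xi^n)_1$ and $\phi(\eta^n)_1$ to coincide eventually with $\phi(\xi)_1$, contradicting the construction.

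By compactness of $X$, pass to subsequences so that $\xi^n \to \xi$ and $\eta^n \to \eta$ in $X$. Since $\xi^n_1 = \eta^n_1 = a$ for every $n$, Corollary~\ref{convergence-in-full-shift-cor} immediately rules out $\xi = \0$ and $\eta = \0$. Moreover, $\xi^n$ and $\eta^n$ agree on their first $n$ coordinates (both begin with $u^n$), so for each fixed coordinate index $i$ we have $\xi^n_i = \eta^n_i$ eventually. The key step is to exclude the possibility that $\xi$ has finite length. If $l(\xi) = k$ for some $k \geq 1$, then Corollary~\ref{convergence-in-full-shift-cor} would demand that for every finite subset $F \subseteq \A$ the coordinate $\xi^n_{k+1}$ eventually avoids $F$, so that infinitely many distinct symbols would appear as possible continuations of $\xi_k$ in blocks of $X$. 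This contradicts row-finiteness of $X$, so $l(\xi) = \infty$. Invoking the convergence characterization once more, together with the eventual equality $\xi^n_i = \eta^n_i$ for each $i$, we conclude that $l(\eta) = \infty$ and $\eta = \xi$.

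Since $\phi$ is a shift morphism it preserves lengths, so $l(\phi(\xi)) = l(\xi) = \infty$, and $\phi(\xi)_1$ is well-defined. Continuity of $\phi$ yields $\phi(\xi^n) \to \phi(\xi)$ and $\phi(\eta^n) \to \phi(\xi)$, and because $l(\phi(\xi)) = \infty$, Corollary~\ref{convergence-in-full-shift-cor} gives $\phi(\xi^n)_1 = \phi(\eta^n)_1 = \phi(\xi)_1$ for all sufficiently large $n$. This contradicts the standing assumption $\phi(\xi^n)_1 \neq \phi(\eta^n)_1$, completing the argument. The main obstacle to anticipate is the careful deployment of row-finiteness to rule out the case $l(\xi) < \infty$: without row-finiteness the prefixes $u^n$ could spread out horizontally (using new symbols after position $k$) and produce a finite-length limit, which would prevent the comparison of $\phi$-values on the first coordinate.
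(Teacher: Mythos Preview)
Your argument is correct and follows essentially the same route as the paper's proof: assume the conclusion fails, extract convergent subsequences by compactness, use row-finiteness to force the limit to have infinite length, observe that the two sequences share growing common prefixes and hence have the same limit, and then invoke continuity of $\phi$ (together with length preservation) to obtain the contradiction on the first coordinate. The only cosmetic difference is that the paper works inside the compact cylinder $Z(a)$ rather than all of $X$, and argues directly that $u^{n_k}y^{n_k}$ converges to the same limit as $u^{n_k}x^{n_k}$ without separately extracting a subsequence for it.
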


\begin{proof}
Suppose the claim does not hold.  Then for each $n \in \N$ there exists $u^n \in B_n(X) \cap Z(a)$, $u^nx^n \in X$, and $u^ny^n \in X$ such that $$\phi(u^nx^n)_1 \neq \phi(u^ny^n)_1.$$
Consider the sequence $\{ u^nx^n \}_{n=1}^\infty$ and $\{ u^n y^n \}_{n=1}^\infty$ in $Z(a)$.  Since $Z(a)$ is compact by Lemma~\ref{cylinder-compopen-lem}, there exists convergent subsequences $\{ u^{n_k}x^{n_k} \}_{k=1}^\infty$ and $\{ u^{n_k}y^{n_k} \}_{k=1}^\infty$ converging to a (nonzero) element of $Z(a)$. Since $X$ is row-finite, this implies that there exists $z \in X$ with $l(z) = \infty$ and $\lim_{k \to \infty} u^{n_k} x^{n_k} = z$.  Because $n_1 < n_2 < n_3 < \ldots$ and $l(u^{n_k}) = n_k$, we see that $\lim_{k \to \infty} u^{n_k} y^{n_k} = z$ also.  However, since $$\lim_{k \to \infty} u^{n_k} x^{n_k} = \lim_{k \to \infty} u^{n_k} y^{n_k} = z,$$ the continuity of $\phi$ implies that $$\lim_{k \to \infty} \phi(u^{n_k} x^{n_k}) = \lim_{k \to \infty} \phi(u^{n_k} y^{n_k}) = \phi(z).$$  Since $l(z) = \infty$, we know $l(\phi(z)) = \infty$.  Hence for a large enough $k$ we have $$\phi(u^{n_k}x^{n_k})_1 = \phi(u^{n_k}y^{n_k})_1,$$ which is a contradiction.
\end{proof}

\begin{proposition} \label{shift-mophism-is-SBC-rf-prop}
Let $\A$ be a countable alphabet, and let $X$ and $Y$ be shift spaces over $\A$.  If $X$ is row-finite and $\phi : X \to Y$ is a shift morphism, then $\phi$ is a sliding block code.  Moreover, if $\phi$ is a bounded sliding block code with bound $M$, then we may choose $\Phi : B_M (X) \to \A$ such that 
$$\phi(x_1 x_2 x_3 \ldots) = \Phi(x_1 \ldots x_M) \Phi(x_2 \ldots x_{M+1}) \Phi (x_3 \ldots x_{M+2}) \ldots$$
and $\phi$ is an $M$-block code.
\end{proposition}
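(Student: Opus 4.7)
The plan is to produce the required local block maps using Lemma~\ref{SBC-preliminary-lem} and then combine them by exploiting $\phi \circ \sigma = \sigma \circ \phi$, so that the maps ``slide'' along any infinite sequence of $X$.

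First, for each $a \in \A$, I would invoke Lemma~\ref{SBC-preliminary-lem} to choose $n(a) \in \N$ such that $\phi(ux)_1$ depends only on $u$ whenever $u \in B_{n(a)}(X) \cap Z(a)$ and $ux \in X$.  This yields a well-defined block map $\Phi^a : B_{n(a)}(X) \cap Z(a) \to \A$ given by $\Phi^a(u) := \phi(ux)_1$ for any extension $ux \in X$.  Such an extension exists because $u \in B_{n(a)}(X)$ means $u$ is a subblock of some element of $X$, and shift-invariance of $X$ combined with Proposition~\ref{inf-ext-in-shift-prop} produces an infinite extension of $u$ lying in $X$.  To verify part~(b) of Definition~\ref{SBC-def}, pick $x = x_1 x_2 \ldots \in \Xinf$ and $i \in \N$ and compute
$$\phi(x)_i \;=\; \phi(\sigma^{i-1}(x))_1 \;=\; \phi(x_i x_{i+1} \ldots)_1 \;=\; \Phi^{x_i}(x_i \ldots x_{n(x_i)+i-1}),$$
using $\sigma \circ \phi = \phi \circ \sigma$ and length preservation (which forces $l(\phi(x)) = \infty$, so $\phi(x)_i$ makes sense).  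For part~(a), if $x^n \to \0$ with $x^n \in X$, then closedness of $X$ forces $\0 \in X$; length preservation forces $\phi(\0) = \0$; and continuity of $\phi$ gives $\phi(x^n) \to \0$.

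For the ``moreover'' clause, assume $n(a) \leq M$ for every $a \in \A$.  I would define $\Phi : B_M(X) \to \A$ by $\Phi(u) := \phi(uy)_1$, where $uy \in \Xinf$ is any infinite extension of $u$, which again exists by shift-invariance and Proposition~\ref{inf-ext-in-shift-prop}.  Applying Lemma~\ref{SBC-preliminary-lem} with $n = n(u_1) \leq M$ shows the value depends only on $u$ (indeed only on the first $n(u_1)$ entries of $u$), so $\Phi$ is well defined.  The same shift-commutation calculation as above then exhibits $\phi$ as the $M$-block code induced by $\Phi$.

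I expect the main subtlety to be purely bookkeeping: one must ensure that every block in $B_{n(a)}(X) \cap Z(a)$, and every block in $B_M(X)$ in the bounded case, admits enough extensions inside $X$ for Lemma~\ref{SBC-preliminary-lem} to apply unambiguously.  This reduces cleanly to shift-invariance of $X$ together with the infinite-extension property, so no deeper input is required beyond the machinery already established in the paper.
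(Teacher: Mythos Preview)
Your proposal is correct and follows essentially the same route as the paper: invoke Lemma~\ref{SBC-preliminary-lem} to define $\Phi^a$, verify well-definedness via an infinite extension of each block, and then use $\phi\circ\sigma=\sigma\circ\phi$ to slide the block maps along $x$; the bounded case is handled identically by taking a single $\Phi$ on $B_M(X)$. The only cosmetic difference is that the paper justifies the existence of an infinite extension by appealing directly to row-finiteness (every nonzero element of $X$ is already infinite), whereas you route through Proposition~\ref{inf-ext-in-shift-prop}; both arguments are valid.
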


\begin{proof}
Since $\phi$ is a shift morphism, if $\0 \in X$ then $\phi(\0) = \0$ and $\phi$ is continuous at $\0$.  Thus Property~(a) of Definition~\ref{SBC-def} is satisfied.

We now establish Property~(b) of Definition~\ref{SBC-def}.  For each $a \in \A$, Lemma~\ref{SBC-preliminary-lem} implies that there exists $n(a) \in \N$ such that  whenever $u \in B_{n(a)}(X) \cap Z(a)$, $ux \in X$, and $uy \in X$, then
\begin{equation} \label{well-def-SBC-eq}
\phi(ux)_1 = \phi(uy)_1.
\end{equation}
Define the map $\Phi^a : B_{n(a)}(X) \cap Z(a) \to \A$ as follows: For each $u \in B_{n(a)}(X) \cap Z(a)$, the fact that $X$ is row-finite implies that $u$ is a subblock of an infinite sequence in $X$, and by shifting we may choose $u$ as the initial segment of an infinite path.  Thus there exists $x^n \in X$ such that $ux^n \in X$.  We then define
$$\Phi^a (u) := \phi(ux^n)_1.$$
We see that $\phi$ is well defined by \eqref{well-def-SBC-eq}.  Furthermore, for all $x \in \Xinf$ we have
$$\phi(x)_i = \sigma^{i-1}(\phi(x))_1 = \phi (\sigma^{i-1}(x))_1 = \phi( x_i x_{i+1} \ldots)_1 = \Phi^{x_i} (x_i \ldots x_{n(x_i) + i -1}).$$
Thus $\phi$ is a sliding block code.

Moreover, if there exists $M \in \N$ such that $n(a) \leq M$ for all $a \in \A$, then when defining $\Phi^a$ above we can instead define $\Phi^a : B_M(X) \to \A$ by $\Phi^a(u) := \phi(ux)_1$ for any $x \in X$ with $ux \in X$.  This function is likewise well defined, and so the $\Phi^a$ agree for all $a \in \A$.  Thus we let $\Phi := \Phi^a$ for any (and hence all) $a \in \A$, and $\phi$ is an $M$-block code.
\end{proof}

The following may be thought of as a generalization of the Curtis-Hedlund-Lyndon Theorem to row-finite shift spaces over countable alphabets.

\begin{theorem} \label{SBC-ff-shift-morphism-thm}
Let $\A$ be a countable alphabet, and let $X$ and $Y$ be shift spaces over $\A$.  If $X$ is row-finite and $\phi : X \to Y$ is a function, then $\phi$ is a shift morphism if and only if $\phi$ is a sliding block code.  Moreover, if $\phi$ is a bounded sliding block code, then $\phi$ is an $M$-block code from some $M \in \N$.
\end{theorem}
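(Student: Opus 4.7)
The plan is to observe that Theorem~\ref{SBC-ff-shift-morphism-thm} is a direct assembly of the two preceding results, Lemma~\ref{SBC-is-shift-morphism-rf-lem} and Proposition~\ref{shift-mophism-is-SBC-rf-prop}, so the proof is essentially a citation assembled in the right order. I would split the biconditional into its two implications and then deduce the moreover clause from the bounded case of the proposition.

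For the direction ``sliding block code implies shift morphism,'' I would invoke Lemma~\ref{SBC-is-shift-morphism-rf-lem} directly; that lemma uses Property~(a) of Definition~\ref{SBC-def} to handle continuity at $\0$, the first countability of $X$ (which holds because $\A$ is countable, by Corollary~\ref{countability-cor}) to reduce continuity elsewhere to sequential continuity, and Property~(b) to verify commutation with $\sigma$ and preservation of length. For the converse, I would invoke Proposition~\ref{shift-mophism-is-SBC-rf-prop}, whose proof applies the compactness-based Lemma~\ref{SBC-preliminary-lem} one symbol at a time to produce, for each $a \in \A$, a lookahead $n(a)$ and a well-defined block map $\Phi^{a} : B_{n(a)}(X) \cap Z(a) \to \A$ realizing the sliding-block structure.

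For the moreover clause, suppose $\phi$ is a bounded sliding block code with bound $M$. Then by the direction already cited, $\phi$ is a shift morphism, and applying the moreover portion of Proposition~\ref{shift-mophism-is-SBC-rf-prop} replaces the family $\{\Phi^{a}\}$ by a single map $\Phi : B_{M}(X) \to \A$, so that $\phi$ is an $M$-block code. Since all the substantive work --- the compactness-plus-row-finiteness subsequence extraction in Lemma~\ref{SBC-preliminary-lem} and the sequential continuity verification in Lemma~\ref{SBC-is-shift-morphism-rf-lem} --- has already been carried out, I anticipate no obstacle at this stage; the role of this theorem is simply to unify those results into a clean Curtis--Hedlund--Lyndon-style statement for row-finite shift spaces over countable alphabets.
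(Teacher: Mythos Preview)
Your proposal is correct and matches the paper's approach exactly: the paper's proof is the single sentence ``The result follows from Lemma~\ref{SBC-is-shift-morphism-rf-lem} and Proposition~\ref{shift-mophism-is-SBC-rf-prop},'' and your write-up simply unpacks how those two results supply the two directions and the moreover clause.
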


\begin{proof}
The result follows from Lemma~\ref{SBC-is-shift-morphism-rf-lem} and Proposition~\ref{shift-mophism-is-SBC-rf-prop}.
\end{proof}

\begin{proposition} \label{bounded-composed-is-bounded-prop}
Let $X$, $Y$, and $Z$ be shift spaces over a countable alphabet $\A$, and let $X$ and $Y$ be row-finite.  If $\phi : X \to Y$ is a bounded sliding block code, and $\psi : Y \to Z$ is a bounded sliding block code, then the composition $\psi \circ \phi : X \to Z$ is a bounded sliding block code.  Moreover, if $\phi$ is an $M$-block code and $\psi$ is an $N$-block code, then $\psi \circ \phi$ is an $(M+N-1)$-block code.
\end{proposition}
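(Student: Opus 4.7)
The plan is to prove the ``moreover'' clause first by a direct computation, and then derive the main statement as a consequence via Theorem~\ref{SBC-ff-shift-morphism-thm}. So assume $\phi$ is an $M$-block code with associated block map $\Phi : B_M(X) \to \A$, and $\psi$ is an $N$-block code with associated block map $\Psi : B_N(Y) \to \A$. The natural candidate for an $(M+N-1)$-block map $\Theta : B_{M+N-1}(X) \to \A$ is defined by
\[
\Theta(u_1 u_2 \ldots u_{M+N-1}) := \Psi\bigl(\Phi(u_1 \ldots u_M)\,\Phi(u_2 \ldots u_{M+1}) \,\cdots\, \Phi(u_N \ldots u_{M+N-1})\bigr).
\]

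Before using $\Theta$ I would check it is well-defined, meaning the argument of $\Psi$ actually lies in $B_N(Y)$. To see this, fix $u = u_1 \ldots u_{M+N-1} \in B_{M+N-1}(X)$. Since $X$ is row-finite, $u$ must appear as a subblock of some infinite sequence of $X$, and after applying a suitable power of $\sigma$ (which preserves $X$) we may assume there is $x \in \Xinf$ whose first $M+N-1$ entries are exactly $u_1, \ldots, u_{M+N-1}$. Since $\phi$ preserves infinite length, $\phi(x) \in Y^\textnormal{inf}$, and the $M$-block formula for $\phi$ gives that the first $N$ entries of $\phi(x)$ are precisely $\Phi(u_1 \ldots u_M), \ldots, \Phi(u_N \ldots u_{M+N-1})$, which is therefore an allowed $N$-block of $Y$. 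Running the same calculation starting at index $i$ rather than $1$, I would verify that for every $x \in \Xinf$ and every $i \in \N$,
\[
(\psi \circ \phi)(x)_i = \Psi\bigl(\phi(x)_i \cdots \phi(x)_{i+N-1}\bigr) = \Theta(x_i \cdots x_{i+M+N-2}),
\]
so the desired block-code formula holds on $\Xinf$.

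To close out the ``moreover'' part I would check property~(a) of Definition~\ref{SBC-def}: if $\{ x^n \}_{n=1}^\infty \subseteq X$ with $x^n \to \0$, then because $\phi$ is a sliding block code, $\phi(x^n) \to \0$ in $Y$, and because $\psi$ is a sliding block code, $\psi(\phi(x^n)) \to \0$ in $Z$. Hence $\psi \circ \phi$ is an $(M+N-1)$-block code. For the main assertion, Theorem~\ref{SBC-ff-shift-morphism-thm} lets me assume any bounded sliding block codes $\phi$ and $\psi$ are block codes of some orders $M$ and $N$, and the ``moreover'' clause then gives that $\psi \circ \phi$ is an $(M+N-1)$-block code, which is in particular bounded. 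The proof is largely bookkeeping on indices; the only mildly subtle step is checking that $\Theta$ is well-defined, which relies crucially on the row-finiteness of $X$ (together with the infinite-extension property) to lift the finite block $u$ to an infinite sequence on which the block-code formula for $\phi$ is directly applicable.
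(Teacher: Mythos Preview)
Your proposal is correct and follows essentially the same route as the paper's proof: define the composite block map $\Theta$ (the paper calls it $\Delta$) exactly as you do, verify property~(a) by composing, and verify property~(b) by the same index computation, with the main statement reduced to the ``moreover'' clause via Theorem~\ref{SBC-ff-shift-morphism-thm}. The only difference is that you explicitly check $\Theta$ is well defined (i.e., that the argument of $\Psi$ lies in $B_N(Y)$), a point the paper leaves implicit.
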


\begin{proof}
Since $\phi$ and $\psi$ are bounded sliding block codes, it follows from Theorem~\ref{SBC-ff-shift-morphism-thm} that $\phi$ is an $M$-block code and $\psi$ is an $N$-block code for some $M, N \in \N$.  Let $\Phi : B_M(X) \to \A$ be an $M$-block map for $\phi$ and let $\Psi : B_{N} (Y) \to \A$ be an $N$-block map for $\psi$, so that $\phi(x_1 x_2 x_3 \ldots)_i = \Phi(x_i \ldots x_{M+i-1} )$ and $\psi(y_1 y_2 y_3 \ldots)_i = \Psi(y_i \ldots y_{N+i-1} )$ for all $i \in \N$ and for all $x_1 x_2 x_3 \ldots \in \Xinf$ and $y_1 y_2 y_3 \ldots \in Y^{\text{inf}}$. 

Since $\phi$ and $\psi$ are each continuous at $\0$, it follows that $\psi \circ \phi$ is continuous at $\0$ and Property~(a) from Definition~\ref{SBC-def} is satisfied.  To verify Property~(b), we define an $(M+N-1)$-block map $\Delta : B_{M+N-1} (X) \to \A$ by $$\Delta (a_1 \ldots a_{M+N-1}) := \Psi ( \Phi (a_1 \ldots a_M) \Phi (a_2 \ldots a_{M+1}) \ldots \Phi (a_N \ldots a_{M+N-1})).$$  Then for any $x_1 x_2 x_3 \ldots \in \Xinf$ and any $i \in \N$ we have
\begin{align*}
\psi \circ \phi (x_1 &x_2  x_3  \ldots)_i \\
&= \Psi ( \phi (x_1 x_2 x_3 \ldots)_i \phi (x_1 x_2 x_3 \ldots)_{i+1} \ldots \phi (x_1 x_2 x_3 \ldots)_{i+N-1})  \\
&= \Psi ( \Phi(x_i \ldots x_{M+i-1}) \Phi(x_{i+1} \ldots x_{M+i}) \ldots \Phi(x_{N+i-1} \ldots x_{M+N+i-2})) \\
&= \Delta (x_i \ldots x_{M+N+i-2})
\end{align*}
so that $\psi \circ \phi$ is an $M+N-1$-block map, and $\psi \circ \phi$ is a bounded sliding block code.
\end{proof}

\begin{proposition} \label{higher-block-code-is-conj-prop}
Let $\A$ be a countable alphabet, and let $X$ be a row-finite shift space over $\A$.  Choose $M \in \N$, let $X^{[M]}$ be the $M$\textsuperscript{th} higher block presentation of $X$, and let  $\phi_M : X \to X^{[M]}$ be the $M$\textsuperscript{th}-higher block code from $X$ to $X^{[M]}$ (see Definition~\ref{N-block-code-def} and Definition~\ref{N-block-presentation-def}).  Then $\phi_M :  X \to X^{[M]}$ is an $M$-block code that is also a conjugacy, and moreover, the inverse of $\phi_M$ is a $1$-block code.
\end{proposition}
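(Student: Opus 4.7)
The plan is to exhibit explicit block maps implementing $\phi_M$ and its inverse, and to invoke Proposition~\ref{NSS-cong} for the conjugacy claim. In doing so, I will interpret the block-code terminology of Definition~\ref{SBC-def} in the evident way when the domain and codomain shifts are over different alphabets, with the block map taking values in the alphabet of the codomain.

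First, I would verify that $\phi_M$ is an $M$-block code. The alphabet of $X^{[M]}$ is $B_M(X)$, so I define $\Phi : B_M(X) \to B_M(X)$ to be the identity map (sending each $M$-block of $X$ to itself, now regarded as a single symbol of $X^{[M]}$). Then by the very definition of $\phi_M$, for every $x = x_1 x_2 x_3 \ldots \in \Xinf$ and every $i \in \N$,
\[
\phi_M(x)_i = x_i x_{i+1} \ldots x_{i+M-1} = \Phi(x_i \ldots x_{i+M-1}).
\]
Also $\phi_M(\0) = \0$, and continuity of $\phi_M$ at $\0$ is guaranteed by Proposition~\ref{NSS-cong}. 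Hence both conditions of Definition~\ref{SBC-def} hold, and $\phi_M$ is an $M$-block code. The fact that $\phi_M$ is a conjugacy is exactly the content of Proposition~\ref{NSS-cong}, so nothing further is needed for that part of the statement.

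For the inverse, the key observation is the progressive overlap property built into $X^{[M]}$: if $y = u^{(1)} u^{(2)} u^{(3)} \ldots \in (X^{[M]})^\textnormal{inf}$ with each $u^{(i)} \in B_M(X)$ written as $u^{(i)} = u^{(i)}_1 u^{(i)}_2 \ldots u^{(i)}_M$, then $u^{(i+1)}_j = u^{(i)}_{j+1}$ for $1 \leq j \leq M-1$. Consequently the unique $x \in \Xinf$ with $\phi_M(x) = y$ must satisfy $x_i = u^{(i)}_1$. Defining the $1$-block map $\Psi : B_1(X^{[M]}) \to \A$ by $\Psi(u) := u_1$ (the first letter of $u$, regarded as an element of $\A$), I obtain
\[
\phi_M^{-1}(y)_i = u^{(i)}_1 = \Psi(u^{(i)})
\]
for every $y \in (X^{[M]})^\textnormal{inf}$ and every $i \in \N$. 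Since $\phi_M^{-1}$ is the inverse of a conjugacy, it is itself a shift morphism, and in particular $\phi_M^{-1}(\0) = \0$ with $\phi_M^{-1}$ continuous at $\0$. Therefore $\phi_M^{-1}$ is a $1$-block code.

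I do not anticipate a real obstacle here: the only substantive point is the observation that the progressive overlap condition forces the first-letter map to recover the original sequence, and everything else is bookkeeping against the definitions.
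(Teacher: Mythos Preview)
Your proof is correct and follows essentially the same approach as the paper: both define $\Phi:B_M(X)\to B_1(X^{[M]})$ as the identity and take the inverse's $1$-block map to be the first-letter projection $\Psi(u)=u_1$. Your explicit appeal to the progressive overlap property and to Proposition~\ref{NSS-cong} is slightly more detailed than the paper's terse ``one can easily verify,'' but the underlying argument is the same.
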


\begin{proof}
It is clear that the shift morphism $\phi_M : X \to X^{[M]}$ is an $M$-block code coming from the $M$-block map $\Phi : B_M(X) \to B_1(X^{[M]})$ given by $\Phi (a_1 \ldots a_M) = a_1 \ldots a_M$.   (Recall that $B_1(X^{[M]}) = B_M(X)$.)  Let $\pi_M : X^{[M]} \to X$ be the shift morphism coming from the $1$-block map $\Pi : B_1(X^{[M]}) \to \A$ given by $\Pi (a_1 \ldots a_m) = a_1$.  One can easily verify that $\pi$ is an inverse for $\phi$. 
\end{proof}

\begin{proposition}[Recoding to a $1$-block code] \label{recodeone}
Let $\A$ be a countable alphabet, let $X$ and $Y$ be shift spaces over $\A$, and suppose that $X$ is row-finite.  If $\psi : X \to Y$ is an $M$-block code, and if $X^{[M]}$ denotes  the $M$\textsuperscript{th} higher block presentation of $X$ and $\phi_M : X \to X^{[M]}$ denotes the $M$\textsuperscript{th}-higher block code from $X$ to $X^{[M]}$ (see Definition~\ref{N-block-code-def} and Definition~\ref{N-block-presentation-def}), then there exists a $1$-block code $\psi^{[M]}: X^{[M]} \to Y$ such that $\psi^{[M]} \circ \phi_M= \psi$.  In particular, the following diagram commutes:
\[\begin{tikzpicture}
   \node[inner sep=0pt, circle] (a) at (-1, 1) {$X$}; 
   \node[inner sep=0pt, circle] (b) at (-1, -0.5) {$Y$}; 
   \node[inner sep=0pt, circle] (d) at (1, 1) {$X^{[M]}$}; 
     \draw[style=semithick, -latex] (a.south)--(b.north) node[pos=0.5, anchor=east, inner sep=1pt]{$ \psi$};
    \draw[style=semithick, -latex] (a.east)--(d.west) node[pos=0.5, anchor=south, inner sep=3pt]{$\phi_M$} node[pos=0.4, anchor=north, inner sep=3pt]{$\cong$};
      \draw[style=semithick, -latex] (d.south west)--(b.east) node[pos=0.5, anchor=west, inner sep=7pt]{$\psi^{[M]}$};
\end{tikzpicture}\]
\end{proposition}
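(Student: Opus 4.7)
The plan is to define $\psi^{[M]}$ as the composition $\psi \circ \phi_M^{-1}$ and then observe that the underlying $M$-block map of $\psi$ becomes a $1$-block map once reinterpreted on the alphabet of $X^{[M]}$.

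First, since $\phi_M : X \to X^{[M]}$ is a conjugacy (Proposition~\ref{higher-block-code-is-conj-prop}), its set-theoretic inverse $\phi_M^{-1}$ is itself a shift morphism, so $\psi^{[M]} := \psi \circ \phi_M^{-1} : X^{[M]} \to Y$ is a well-defined shift morphism which, tautologically, satisfies $\psi^{[M]} \circ \phi_M = \psi$. Moreover, by Proposition~\ref{NSS-cong}, $X^{[M]}$ is row-finite with alphabet $B_1(X^{[M]}) = B_M(X)$.

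Next, I would show that $\psi^{[M]}$ is a $1$-block code. By Theorem~\ref{SBC-ff-shift-morphism-thm}, the $M$-block code $\psi$ is given by some block map $\Psi : B_M(X) \to \A$ satisfying
$$\psi(x_1 x_2 \ldots)_i = \Psi(x_i x_{i+1} \ldots x_{M+i-1})$$
for all $x = x_1 x_2 \ldots \in \Xinf$ and all $i \in \N$. Because $B_1(X^{[M]}) = B_M(X)$, we may regard $\Psi$ as a function on $B_1(X^{[M]})$. For any $y \in (X^{[M]})^{\textnormal{inf}}$, writing $y = \phi_M(x)$ for the unique $x \in \Xinf$ gives $y_i = x_i \ldots x_{M+i-1}$, and hence
$$\psi^{[M]}(y)_i = \psi(x)_i = \Psi(x_i \ldots x_{M+i-1}) = \Psi(y_i).$$
If $\0 \in X^{[M]}$, then $\phi_M^{-1}(\0) = \0 \in X$ and $\psi^{[M]}(\0) = \psi(\0) = \0$, so property~(a) of Definition~\ref{SBC-def} is inherited from $\psi$. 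This identifies $\psi^{[M]}$ as the $1$-block code on $X^{[M]}$ coming from the $1$-block map $\Psi$.

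There is no serious obstacle here; the statement is essentially a bookkeeping identity. The only point requiring care is that a single function $\Psi$ consistently plays both roles --- an $M$-block map on $X$ and a $1$-block map on $X^{[M]}$ --- which works precisely because the $M$\textsuperscript{th} higher block presentation is engineered so that one letter of $X^{[M]}$ records $M$ consecutive letters of $X$.
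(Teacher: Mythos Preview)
Your proof is correct and follows essentially the same idea as the paper's: both identify the $M$-block map $\Psi : B_M(X) \to \A$ defining $\psi$ with a $1$-block map on $X^{[M]}$ via the identification $B_1(X^{[M]}) = B_M(X)$, and verify the same computation $\psi^{[M]}(\phi_M(x))_i = \Psi(x_i \ldots x_{i+M-1}) = \psi(x)_i$. The only cosmetic difference is that the paper defines $\psi^{[M]}$ directly by the formula $\psi^{[M]}(y)_i := \Psi(y_i)$ and then checks the diagram commutes, whereas you first define $\psi^{[M]} := \psi \circ \phi_M^{-1}$ (guaranteeing commutativity) and then check it is the $1$-block code coming from $\Psi$.
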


\begin{proof}
Let $\Psi : B_M(X) \to \A$ be an $M$-block map that defines $\psi$.  Define the $1$-block code $\psi^{[M]}: X^{[M]} \to Y$ by $\psi^{[M]} (x)_i := \Psi (x_i)$. Then for all $i \in \N$ we have
\begin{align*}
(\psi^{[M]} \circ \phi_M (x))_i &=\Psi ( \phi_M(x)_i) = \Psi (x_i \ldots x_{i+M-1}) = \psi(x)_i,
\end{align*}
and $\psi^{[M]} \circ \phi_M= \psi$.
\end{proof}

\begin{remark} \label{rem-recodeone}
Note that if the map $\psi$ in Proposition~\ref{recodeone} is a conjugacy, then the fact that $\phi_M$ is a conjugacy (see Proposition~\ref{higher-block-code-is-conj-prop}) implies that $\psi^{[M]}$ is also a conjugacy.  Moreover, if $\psi$ is a conjugacy with bounded inverse, then Proposition~\ref{bounded-composed-is-bounded-prop}  and Proposition~\ref{higher-block-code-is-conj-prop} imply that $\psi^{[M]}$ has a bounded inverse.
\end{remark}

We now describe a method to determine if a sliding bock code is bounded.

\begin{definition} \label{boundedness-metric-def}
Let $X$ be a shift space.  We define the \emph{boundedness metric} $D$ on $\Xinf$ by
$$D(x ,y) := \begin{cases} 1/2^n & \text{if $n$ is the smallest value such that $x_n \neq y_n$}  \\
0 & \text{if $x=y$} \\ 
\end{cases} $$
for all $x,y \in \Xinf$.
\end{definition}

\begin{remark}
Note that if $X$ is infinite-symbol, the boundedness metric $D$ on $\Xinf$ cannot be extended to a metric on $X$ that gives the topology of $X$.  In particular, if $\{ x^n \}_{n=1}^\infty \subseteq X$ and $\lim_{n \to \infty} x^n = \0$, then the sequence $\{ x^n \}_{n=1}^\infty$ is not Cauchy with respect to $D$ --- indeed there are arbitrarily large $m,n \in \N$ with $D(x^m, x^n) = 1/2$.

On the other hand, if $X$ is finite-symbol, then $D$ induces the usual metric on $\Xinf = X$.
\end{remark}

\begin{proposition} \label{bounded-fff-uniformly-cts-prop}
Let $\A$ be a countable alphabet, and let $X$ and $Y$ be shift spaces over $\A$.  If $X$ is row-finite and $\phi : X \to Y$ is a sliding block code, then $\phi$ is bounded if and only if $\phi|_{\Xinf}$ is uniformly continuous with respect to the boundedness metric $D$ of Definition~\ref{boundedness-metric-def}.
\end{proposition}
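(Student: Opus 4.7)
The plan is to prove both implications directly via the $M$-block code characterization from Theorem~\ref{SBC-ff-shift-morphism-thm}. For the forward direction, suppose $\phi$ is bounded; by that theorem there is a single block map $\Phi : B_M(X) \to \A$ with $\phi(x)_i = \Phi(x_i \ldots x_{i+M-1})$ for every $x \in \Xinf$ and every $i \in \N$. Given $\epsilon > 0$, I would choose $N \in \N$ with $1/2^N < \epsilon$ and set $\delta := 1/2^{N+M-1}$. Then $D(x,y) < \delta$ forces $x_j = y_j$ for $1 \leq j \leq N+M-1$, so $\phi(x)_i = \phi(y)_i$ for $1 \leq i \leq N$, whence $D(\phi(x), \phi(y)) \leq 1/2^{N+1} < \epsilon$.

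For the reverse direction, assume $\phi|_{\Xinf}$ is uniformly continuous with respect to $D$. The first step is to apply uniform continuity with $\epsilon = 1/2$ to obtain $\delta > 0$ such that $D(x,y) < \delta$ implies $D(\phi(x), \phi(y)) < 1/2$, i.e., $\phi(x)_1 = \phi(y)_1$. I would then choose $M \in \N$ with $1/2^M < \delta$. Since agreement of $x, y \in \Xinf$ on the first $M$ coordinates forces $D(x,y) \leq 1/2^{M+1} < \delta$, this says that $\phi(x)_1$ depends only on the initial block $x_1 \ldots x_M$. The next step is to build a single $M$-block map $\Phi : B_M(X) \to \A$ by setting $\Phi(u) := \phi(uz)_1$ for any $z$ with $uz \in \Xinf$. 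Using that $\phi$ commutes with $\sigma$, iteration yields
\begin{equation*}
\phi(x)_i = \phi(\sigma^{i-1}(x))_1 = \Phi(x_i \ldots x_{i+M-1})
\end{equation*}
for every $x \in \Xinf$ and every $i \in \N$, exhibiting $\phi$ as an $M$-block code and hence as a bounded sliding block code.

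The main subtlety will be verifying that $\Phi$ is well-defined on all of $B_M(X)$. Two things need checking. First, every $u \in B_M(X)$ must admit some infinite extension $uz \in \Xinf$: since $X$ is row-finite, Proposition~\ref{rf-char-prop} gives $\Xfin \subseteq \{\0\}$, so any element of $X$ witnessing $u \in B_M(X)$ must have infinite length, and a suitable power of $\sigma$ promotes $u$ from an arbitrary subblock to an initial segment while staying inside $X$ by shift-invariance. Second, the value $\phi(uz)_1$ must be independent of the chosen extension, which is precisely what the uniform continuity argument in the previous paragraph provides, since any two candidate extensions agree on their first $M$ coordinates. No separate treatment of the empty sequence is needed: $B_M(X)$ contains no empty block for $M \geq 1$, and condition~(a) of Definition~\ref{SBC-def} already handles $\0 \in X$ when it occurs.
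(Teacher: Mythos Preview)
Your proof is correct and follows essentially the same approach as the paper's own proof: both directions proceed via the $M$-block code characterization, using the same $\epsilon = 1/2$ uniform-continuity argument to define $\Phi$ in the reverse direction and the same shift-commuting computation to verify $\phi(x)_i = \Phi(x_i \ldots x_{i+M-1})$. The only cosmetic difference is your slightly sharper choice of $\delta = 1/2^{N+M-1}$ in the forward direction (the paper uses $1/2^{M+N}$), and your more explicit justification via Proposition~\ref{rf-char-prop} that every $u \in B_M(X)$ extends to an element of $\Xinf$.
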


\begin{proof}
Suppose $\phi$ is a bounded sliding block code, and let $M \in \N$ and $\Phi : B_M(X) \to \A$ such that $\phi(x)_i = \Phi(x_i \ldots x_{i+M-1})$ for all $i \in \N$.  Let $\epsilon > 0$.  Choose $N \in \N$ such that $1/2^N < \epsilon$.  Let $\delta := 1/2^{M+N}$.  If $x,y \in \Xinf$ and $D(x,y) < \delta$, then $D(x,y) < 1/2^{M+N}$, and $x_i = y_i$ for all $1 \leq i \leq M+N$.  It follows that
$$\phi(x)_i = \Phi(x_i \ldots x_{i+M-1}) = \Phi(y_i \ldots y_{i+M-1}) = \phi(y)_i$$
for all $1 \leq i \leq N$.  Thus $D(\phi(x), \phi(y)) < 1/2^N < \epsilon$, and $\phi|_{\Xinf}$ is uniformly continuous with respect to the boundedness metric $D$.

Conversely, suppose that $\phi$ is uniformly continuous with respect to the boundedness metric $D$.  Then there exists $\delta > 0$ such that if $x, y \in \Xinf$ and $D(x,y) < \delta$, then $d(\phi(x), \phi(y)) < 1/2$.  Choose $M \in \N$ such that $1/2^M < \delta$.  Define a block map $\Phi : B_M(X) \to \A$ as follows: For any $u \in B_{M} (X)$, $u$ is a subblock of an element in $\Xinf$.  Since $\sigma(X) \subseteq X$ we may choose $u$ so that  it is the initial segment of this element, and thus there exists $x \in X$ with $ux \in X$.  We then define $\Phi(u) := \phi(ux)_1$.  To see that $\Phi$ is well defined, suppose that $x,y \in X$ with $ux, uy \in X$.  Then $D(ux,uy) < 1/2^M < \delta$, and hence $D(\phi(ux), \phi(uy)) < 1/2$.  It follows that $\phi(ux)_1 = \phi(uy)_1$, and $\Phi$ is well-defined.  We shall now show that $\phi$ is equal to the sliding block code determined by $\Phi$.  Suppose $x \in X$.  Then the fact that $\phi$ commutes with the shift map implies that 
$$\phi(x)_i = \sigma^{i-1} ( \phi(x))_1 = \phi(\sigma^{i-1}(x))_1 = \phi (x_i x_{i+1} \ldots)_1 = \Phi(x_i \ldots x_{i+M-1}).$$ Thus $\phi$ is the $M$-block code determined by $\Phi$, and $\phi$ is bounded.
\end{proof}

\begin{corollary}
If $X$ is a finite-symbol shift space, then any shift morphism is an $M$-block code for some $M \in \N$.
\end{corollary}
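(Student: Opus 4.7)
The plan is to obtain this as a direct corollary of Theorem~\ref{SBC-ff-shift-morphism-thm} by leveraging the fact that when $X$ is finite-symbol, only finitely many symbols ever appear in $X$, so the window sizes $n(a)$ that govern a sliding block code are automatically uniformly bounded.

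First I would note that any finite-symbol shift space is row-finite (since $B_1(X)$ finite forces the set $\{b \in \A : ab \in B(X)\}$ to be finite for every $a \in \A$), so the hypotheses of Theorem~\ref{SBC-ff-shift-morphism-thm} are met. Applying that theorem to a shift morphism $\phi : X \to Y$ gives that $\phi$ is a sliding block code: for each $a \in \A$ there are $n(a) \in \N$ and $\Phi^a : B_{n(a)}(X) \cap Z(a) \to \A$ with
\[
\phi(x_1 x_2 x_3 \ldots)_i = \Phi^{x_i}(x_i \ldots x_{n(x_i)+i-1})
\]
for every $x \in \Xinf$.

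Next I would argue that $\phi$ is in fact a \emph{bounded} sliding block code. The key observation is that for $a \in \A \setminus B_1(X)$ we have $B_{n(a)}(X) \cap Z(a) = \emptyset$, so the maps $\Phi^a$ and the values $n(a)$ for such $a$ never actually enter the formula above (the indices $x_i$ always lie in $B_1(X)$). We are therefore free to redefine $n(a) := 1$ for every $a \notin B_1(X)$ without changing $\phi$. Since $B_1(X)$ is finite, the quantity
\[
M := \max\{n(a) : a \in B_1(X)\} \cup \{1\}
\]
is a well-defined natural number and $n(a) \leq M$ for all $a \in \A$. Hence $\phi$ is a bounded sliding block code.

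Finally, the second statement of Theorem~\ref{SBC-ff-shift-morphism-thm} (equivalently, the ``moreover'' part of Proposition~\ref{shift-mophism-is-SBC-rf-prop}) asserts that any bounded sliding block code on a row-finite shift is an $M$-block code for some $M \in \N$, which finishes the proof. There is essentially no obstacle here; the only mildly delicate point is the cosmetic one of handling $n(a)$ for symbols $a \notin B_1(X)$, which is dealt with by redefining those values on the empty domain.
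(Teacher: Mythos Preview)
Your proof is correct, but it takes a different route from the paper's. The paper invokes Proposition~\ref{bounded-fff-uniformly-cts-prop}: since $X$ is finite-symbol, the boundedness metric $D$ of Definition~\ref{boundedness-metric-def} induces the given topology on $X = \Xinf$, so compactness of $X$ forces the shift morphism to be uniformly continuous with respect to $D$, and then Proposition~\ref{bounded-fff-uniformly-cts-prop} says exactly that this is equivalent to the sliding block code being bounded.

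Your argument bypasses the metric characterization entirely and is more elementary: once Theorem~\ref{SBC-ff-shift-morphism-thm} hands you the data $\{n(a), \Phi^a\}_{a \in \A}$, you simply observe that only the finitely many values $n(a)$ with $a \in B_1(X)$ are ever used, and take their maximum. The paper's approach has the virtue of illustrating what Proposition~\ref{bounded-fff-uniformly-cts-prop} is good for, while yours shows that the corollary really only needs the pigeonhole-type fact that a finite set of natural numbers has a maximum. Both arrive at boundedness and then finish via the ``moreover'' clause of Theorem~\ref{SBC-ff-shift-morphism-thm}.
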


\begin{proof}
Since $X$ is finite-symbol, the topology induced by the boundedness metric $D$ is equal to the topology on $\Xinf = X$.  Since $X$ is a shift space, $X$ is compact.  Thus any shift morphism, which is continuous by definition, must be uniformly continuous with respect to the metric $D$.    It follows from Theorem~\ref{SBC-ff-shift-morphism-thm} that any shift morphism is a sliding block code, and it follows from Proposition~\ref{bounded-fff-uniformly-cts-prop} that any sliding block code is bounded, and hence an $M$-block code for some $M \in \N$.
\end{proof}

\section{Symbolic Dynamics and $C^*$-algebras} \label{C*-algebras-sec}

Shifts of finite type have a long history of interaction with $C^*$-algebras of graphs.  Since the seminal work of Cuntz and Krieger in the early 1980's, it has been known that if two finite graphs have conjugate edge shifts, then the Cuntz-Krieger algebras (i.e., the graph $C^*$-algebras) of those graphs are isomorphic.

In this section we show how our formulation of shift spaces over infinite alphabets allows us to extend this result to infinite graphs.  In particular, we show that if $E$ and $F$ are countable graphs, and the edge shifts $X_E$ and $X_F$ are conjugate in the sense we have described in this paper, then the graph $C^*$-algebras $C^*(E)$ and $C^*(F)$ are isomorphic.  This gives credibility to our definition of shift spaces over countable alphabets, and shows that --- at the very least --- it is a viable definition for graph $C^*$-algebras.

\subsection{$C^*$-algebras of Countable Graphs and their Groupoids} \label{C*-alg-groupoid-subsec}

\begin{definition} \label{graph-C*-def}
If $E = (E^0, E^1, r, s)$ is a directed graph, the \emph{graph $C^*$-algebra} $C^*(E)$ is the universal
$C^*$-algebra generated by mutually orthogonal projections $\{ p_v
: v \in E^0 \}$ and partial isometries with mutually orthogonal
ranges $\{ s_e : e \in E^1 \}$ satisfying
\begin{enumerate}
\item $s_e^* s_e = p_{r(e)}$ \quad  for all $e \in E^1$
\item $s_es_e^* \leq p_{s(e)}$ \quad for all $e \in E^1$
\item $p_v = \sum_{\{ e \in E^1 : s(e) = v \}} s_es_e^* $ \quad for all $v \in E^0$ with $0 < |s^{-1}(v)| < \infty$.
\end{enumerate}
\end{definition}
\begin{definition} We call Conditions (1)--(3) in Definition~\ref{graph-C*-def} the \emph{Cuntz-Krieger relations}.  Any collection $\{ S_e, P_v : e \in E^1, v \in E^0 \}$ where the $P_v$ are mutually orthogonal projections, the $S_e$ are partial isometries with mutually orthogonal ranges, and the Cuntz-Krieger relations are satisfied is called a \emph{Cuntz-Krieger $E$-family}.  For a path $\alpha := e_1 \ldots e_n$, we define $S_\alpha := S_{e_1} \ldots S_{e_n}$ and when  $\alpha = v$ is a vertex we define $S_\alpha := P_v$.
\end{definition}

\begin{definition}
If $E = (E^0, E^1, r, s)$ is a directed graph with no sinks, the \emph{boundary path space} of $E$ is the set
$$\Pa E := E^\infty \cup \{ \alpha \in \bigcup_{n=0}^\infty E^n : r(\alpha) \text{ is an infinite emitter} \}.$$
For each $\alpha \in \bigcup_{n=0}^\infty E^n$ and any finite subset $F \subseteq E^1$, we define
$$Z(\alpha, F) := \{ \alpha \gamma: \gamma \in \Pa E, r(\alpha) = s(\gamma), \text{ and } \gamma_1 \notin F \}.$$
We give $\Pa E$ the topology whose basis consists of the sets
$$\{ Z(\alpha, F) : \alpha \in \bigcup_{n=0}^\infty E^n \text{ and $F \subseteq E^1$ is finite} \}.$$
\end{definition}

\begin{remark} \label{path-space-edge-shift-comparison}
Note that the boundary path space $\Pa E$ includes some of the vertices --- namely the infinite emitters --- and that in the basis elements $Z(\alpha, F)$, the path $\alpha$ is allowed to be an infinite emitter.  The boundary path space is similar to the edge shift of a graph, but not exactly the same.  In fact, if $E$ is a graph, $\Pa E$ is the boundary path space, $E^0_{\textnormal{inf}}$ denotes the set of infinite emitters in $E$, and $X_E$ is the edge shift of $E$, then
$$\Pa E = (X_E \setminus \{ \0 \}) \cup E^0_{\textnormal{inf}}$$
as sets.  Moreover, with the topologies defined on $\Pa E$ and $X_E$, one can see that if $\{ x^n \}_{n=1}^\infty \subseteq X_E \setminus \{ \0 \}$, and $\lim_{n \to \infty} x^n = x$ with $x \in X_E \setminus \{ \0 \}$, then $\lim_{n \to \infty} x^n = x$ in $\Pa E$.  Similarly, if  $\{ x^n \}_{n=1}^\infty \subseteq \Pa E$, $\lim_{n \to \infty} x^n = x$ in $\Pa E$, $l(x) \geq 1$, and $\l(x^n) \geq 1$ for all $n \in \N$, then $\lim_{n \to \infty} x^n = x$ in $X_E$.

Also note that if $\{ e_1, e_2, \ldots \}$ are distinct edges in $E$, then $\lim_{n \to \infty} e_n = \0$ in $X_E$, whereas the sequence $\{ e_n \}_{n=1}^\infty$ converges in $\Pa E$ if and only if there is a vertex $v \in E^0_{\textnormal{inf}}$ and $N \in \N$ such that $s(e_n) = v$ for all $n \geq N$, and in this case $\lim_{n \to \infty} e_n = v$ in $\Pa E$.
\end{remark}

\begin{definition}
If $E = (E^0, E^1, r, s)$ is a directed graph with no sinks, the \emph{graph groupoid} $\G_E$ is defined by
\begin{align*}
\G_E := \{ (\alpha \gamma, l(\alpha) - l(\beta), \beta \gamma) &: \alpha, \beta \in  \bigcup_{n=0}^\infty E^n,
\\
&\qquad \gamma \in \Pa E, \text{ and } r(\alpha) = r(\beta) = s(\gamma) \}.
\end{align*}
If $(x,k,y) \in \G_E$ we define the range map $r(x,k,y) = y$ and source map $s(x,k,y) = x$.  Two elements $(x,k,y), (y', l, z) \in \G_E$ are composable if and only if $y=y'$, in which case we define $(x,k,y) \cdot (y,l,z) := (x, k+l, z)$.  The inverse of $(x,k,y) \in \G_E$ is defined to be $(x,k,y)^{-1} = (y, -k, x)$.  If $\alpha, \beta \in  \bigcup_{n=0}^\infty E^n$ with $r(\alpha) = r(\beta) = v$, and if $F$ is a finite subset of $E^1$, we define
$$Z(\alpha, \beta, F) := (\alpha \gamma, l(\alpha)-l(\beta), \beta \gamma) : \gamma \in \Pa E, \ s(\gamma) = v, \text{ and } \gamma_1 \notin F \}.$$  We give $\G_E$ the topology generated by the collection of all $Z(\alpha, \beta, F)$.
\end{definition}

\begin{remark}
Paterson shows in \cite{Pat} that the collection $Z(\alpha, \beta, F)$ forms a basis for a locally compact Hausdorff topology making $\G_E$ into a topological groupoid, that the source map $(x,k,y) \mapsto x$ is a local homeomorphism from $\G_E$ to the boundary path space $\Pa E$, and that the groupoid $C^*$-algebra $C^*(\G_E)$ is isomorphic to the graph $C^*$-algebra $C^*(E)$.
\end{remark}

\begin{proposition} \label{path-homeo-from-conj-prop}
Let $E$ and $F$ be countable graphs with no sinks and no sources, and suppose that $\phi : X_E \to X_F$ is a conjugacy.  The function $\widetilde{\phi} : \Pa E \to \Pa F$ defined by
$$ \widetilde{\phi} (\alpha) := \begin{cases} \phi(\alpha) & \text{ if $l(\alpha) \geq 1$} \\ r(\phi(e)) & \text{ if $\alpha = v \in E^0_{\textnormal{inf}}$ and $e \in r^{-1}(v)$} \end{cases} $$ is well defined and a homeomorphism from $\Pa E$ onto $\Pa F$.
\end{proposition}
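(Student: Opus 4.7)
The plan is to establish the proposition in three stages: verifying that $\widetilde{\phi}$ is well-defined at each $v \in E^0_{\textnormal{inf}}$, showing that $\widetilde{\phi}$ is a bijection with explicit inverse, and proving continuity in both directions.

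For well-definedness, I would fix $v \in E^0_{\textnormal{inf}}$ and $e_1, e_2 \in r^{-1}(v)$, which is nonempty since $E$ has no sources. Each $\phi(e_i)$ is an element of $X_F$ of length one (as $\phi$ preserves length), hence a single edge, and by Proposition~\ref{description-of-edge-shift-prop} its range is an infinite emitter of $F$. Next I would select a sequence of distinct edges $\{f_n\} \subseteq s^{-1}(v)$ (available since $v$ is an infinite emitter) together with infinite extensions $\gamma_n \in E^\infty$ starting at $r(f_n)$, which exist because $E$ has no sinks. By Corollary~\ref{convergence-in-full-shift-cor}, $e_i f_n \gamma_n \to e_i$ in $X_E$; continuity of $\phi$ combined with the characterization of convergence to a length-one element (Corollary~\ref{convergence-in-full-shift-cor}) forces $\phi(e_i f_n \gamma_n)_1 = \phi(e_i)$ eventually, which together with Lemma~\ref{shift-fact-lem} yields $\phi(e_i f_n \gamma_n) = \phi(e_i) \phi(f_n \gamma_n)$ for large $n$. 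Comparing $i = 1$ and $i = 2$ gives $r(\phi(e_1)) = s(\phi(f_n \gamma_n)_1) = r(\phi(e_2))$, so $\widetilde{\phi}(v)$ is independent of the choice of $e$ and lies in $F^0_{\textnormal{inf}}$.

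For bijectivity, observe that $\phi^{-1}$ is also a conjugacy by Remark~\ref{homeo-of-conj-automatic-rem}, so the same construction produces a map $\widetilde{\phi^{-1}} : \partial F \to \partial E$. On positive-length paths the compositions $\widetilde{\phi^{-1}} \circ \widetilde{\phi}$ and $\widetilde{\phi} \circ \widetilde{\phi^{-1}}$ reduce to $\phi^{-1} \circ \phi$ and $\phi \circ \phi^{-1}$, hence the identity. For $v \in E^0_{\textnormal{inf}}$, choosing $e \in r^{-1}(v)$ and setting $g := \phi(e)$ gives $g \in r^{-1}(\widetilde{\phi}(v))$, whence $\widetilde{\phi^{-1}}(\widetilde{\phi}(v)) = r(\phi^{-1}(g)) = r(e) = v$; the symmetric computation handles the other direction, so $\widetilde{\phi}$ is a bijection with inverse $\widetilde{\phi^{-1}}$.

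Continuity is verified pointwise, using that countability of $E^1$ makes $\partial E$ first countable (basic neighborhoods at any point are parameterized by pairs of a finite path and a finite subset of $E^1$), so sequential arguments suffice. At any $\alpha \in \partial E$ with $l(\alpha) \geq 1$, a convergent sequence $\alpha^n \to \alpha$ in $\partial E$ forces $l(\alpha^n) \geq 1$ eventually, and on positive-length paths the topologies of $\partial E$ and $X_E$ agree by Remark~\ref{path-space-edge-shift-comparison}; continuity of $\phi$ on $X_E$ then transfers directly. The main obstacle is continuity at a vertex $v \in E^0_{\textnormal{inf}}$, where the topologies of $\partial E$ and $X_E$ differ fundamentally: given a basic neighborhood $Z_{\partial F}(w, F'')$ of $w := \widetilde{\phi}(v)$, I would pick any $e' \in r^{-1}(v)$ and use continuity of $\phi$ at $e'$ in $X_E$ to obtain a finite $F \subseteq E^1$ with $\phi(Z_{X_E}(e', F)) \subseteq Z_{X_F}(\phi(e'), F'')$. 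Unpacking this inclusion via Lemma~\ref{shift-fact-lem} shows that for every $e \in s^{-1}(v) \setminus F$ and every extension $\gamma$ with $e\gamma \in X_E$, the first edge of $\phi(e\gamma)$ has source $w$ and lies outside $F''$; hence $\widetilde{\phi}(Z_{\partial E}(v, F)) \subseteq Z_{\partial F}(w, F'')$. Applying the identical argument to $\phi^{-1}$ shows $\widetilde{\phi^{-1}}$ is continuous, so $\widetilde{\phi}$ is a homeomorphism.
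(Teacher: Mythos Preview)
Your proof is correct and follows the same overall strategy as the paper: well-definedness via prepending an edge in $r^{-1}(v)$ and exploiting convergence in $X_E$, bijectivity by constructing $\widetilde{\phi^{-1}}$ symmetrically from $\phi^{-1}$, and continuity by transferring between the topologies on $\partial E$ and $X_E$ through Remark~\ref{path-space-edge-shift-comparison}. The one difference worth noting is your treatment of continuity at an infinite emitter $v$: the paper runs a sequential argument (prepending $e \in r^{-1}(v)$ so that $ex^n \to e$ in $X_E$, then splitting into the subcases $l(x^n) \geq 1$ and $l(x^n) = 0$), whereas you give a direct neighborhood argument, pulling $Z_{\partial F}(w,F'')$ back through $\phi$ at $e'$ to obtain $Z_{X_E}(e',F)$ and then reading off $\widetilde{\phi}(Z_{\partial E}(v,F)) \subseteq Z_{\partial F}(w,F'')$ via Lemma~\ref{shift-fact-lem}; this avoids the case split and is a bit cleaner, but both rest on the same prepend-an-edge idea.
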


\begin{proof}
We first show that $\widetilde{\phi}$ is well defined.  The only issue is when $\alpha = v \in E^0_{\textnormal{inf}}$ is an infinite emitter.  Choose $e,f \in E^1$ with $r(e) = r(f) = v$.  Choose an infinite sequence of distinct edges $e_1, e_2, \ldots \in s^{-1}(v)$, and for each $n \in \N$, choose $x^n \in E^\infty$ such that $e_nx^n \in E^\infty$.  By Lemma~\ref{shift-fact-lem} for each $n \in \N$ there exists $b_n \in E^1$ such that
$$\phi (ee_nx^n) = b_n \phi(e_n x^n)$$
and there exists $c_n \in E^1$ such that
$$\phi(fe_nx^n) = c_n \phi(e_nx^n).$$
Since $$\phi(e) = \phi ( \lim_{n \to \infty} ee_nx^n) = \lim_{n \to \infty} \phi(ee_nx^n) = \lim_{n \to \infty} b_n \phi(e_nx^n)$$ there exists $N \in \N$ such that $n \geq N$ implies $b_n = \phi(e)$.  Likewise, since $$\phi(f) = \phi ( \lim_{n \to \infty} fe_nx^n) = \lim_{n \to \infty} \phi(fe_nx^n) = \lim_{n \to \infty} c_n \phi(e_nx^n)$$ there exists $M \in \N$ such that $n \geq M$ implies $c_n = \phi(f)$.  If we let $K := \max \{ N, M \}$, then $$ \phi (e e_K x^K)=\phi(e) \phi(e_K x^K)$$ so that $r(\phi(e)) = s(\phi(e_K x^K))$, and $$\phi (f e_K x^K)=\phi(f) \phi(e_K x^K)$$ so that $r(\phi(f)) = s(\phi(e_K x^K))$.  Hence $r(\phi(e)) = r(\phi(f))$ and $\widetilde{\phi}$ is well-defined.

Next, we show that $\widetilde{\phi}$ is continuous.  Since $E$ is countable, $\Pa E$ is first countable, and it suffices to show that $\widetilde{\phi}$ is sequentially continuous.  Throughout we shall use the fact that, since  $\phi$ is a conjugacy, $\phi$ preserves lengths.  Let $\{ x^n \}_{n=1}^\infty \subseteq \Pa E$ be a convergent sequence with $x = \lim_{n \to \infty} x^n$ in $\Pa E$.  

If $l(x) \geq 1$, then eventually $l(x^n) \geq 1$ and $x^n \in X_E$.  (See Remark~\ref{path-space-edge-shift-comparison}.)  Thus $x = \lim_{n \to \infty} x^n$ in $X_E$, and hence $\lim_{n \to \infty} \widetilde{\phi} (x^n) = \lim_{n \to \infty} \phi(x^n) = \phi (\lim_{n \to \infty} x^n) = \phi(x) = \widetilde{\phi} (x)$ in $X_F$ and also $\Pa F$.    If $l(x) = 0$, then $x = v \in E^0_{\textnormal{inf}}$ is an infinite emitter, and eventually $s(x^n) = v$.  Since $E$ has no sources, we may choose an edge $e \in E^1$ with $r(e) = v$.  Eventually $ex^n$ is a path in $\Pa E$, and $\lim_{n \to \infty} ex^n = e$ in $\Pa E$, so that $\lim_{n \to \infty} ex^n = e$ in $X_E$.  (See Remark~\ref{path-space-edge-shift-comparison}.)  As before, this implies that $\lim_{n \to \infty} \widetilde{\phi} (ex^n) = \widetilde{\phi}(e)$ in $\Pa F$.  If $l(x^n) \geq 1$, then by Lemma~\ref{shift-fact-lem} we have that $\phi(ex^n) = b_n \phi(x^n) = b_n \widetilde{\phi}(x^n)$ 
for some $b_n \in F^1$.  Since $\lim_{n \to \infty} \phi(ex^n) = \phi(e)$ in $X_F$ there exists $N \in \N$ such that $$n \geq N \text{ and } l(x^n) \geq 1 \implies \phi(ex^n) = \phi(e) \phi(x^n)$$ and hence $n \geq N$ and $l(x^n) \geq 1$ implies $s(\phi(x^n)) = r(\phi(e)) = \widetilde{\phi} (v).$ 
If $l(x^n) = 0$, then $x^n$ is an infinite emitter and there exists $M \in \N$ such that 
$$n \geq M \text{ and } l(x^n) = 0 \implies x^n = r(e) = v.$$  Hence for $n \geq M$ and $l(x^{n}) = 0$ we have
$$\phi(ex^n) = \phi(e) = \phi(e) r(\phi(e)) = \phi(e) \widetilde{\phi}(r(e)) = \phi(e) \widetilde{\phi} (x^n),$$ and thus  $\widetilde{\phi}(x^n) = r(\phi(e)) = \widetilde{\phi}(v)$.
If we let $K := \max \{ N, M \}$, then for any $n \geq K$ we have 
$$ \widetilde{\phi}(ex^n) = \phi(ex^n) = \phi(e) \widetilde{\phi} (x^n)$$
and therefore
$$\lim_{n \to \infty} \widetilde{\phi}(ex^n) = \lim_{n \to \infty} \phi(e) \widetilde{\phi} (x^n)$$
in $\Pa F$.  Since $\lim_{n \to \infty} \widetilde{\phi}(ex^n) = \widetilde{\phi} (e) = \phi(e)$ in $\Pa F$, we have $\lim_{n \to \infty} \widetilde{\phi} (x^n) = r(\phi(e)) = \widetilde{\phi} (v)$.  Thus $\widetilde{\phi}$ is continuous.

We have shown that $\widetilde{\phi} : \Pa E \to \Pa F$ is well defined and continuous.  To show $\widetilde{\phi}$ is a homeomorphism, we simply define $\widetilde{\phi}^{-1} : \Pa F \to \Pa E$ by
$$ \widetilde{\phi}^{-1} (\alpha) := \begin{cases} \phi^{-1}(\alpha) & \text{ if $l(\alpha) \geq 1$} \\ r(\phi^{-1}(f)) & \text{ if $\alpha = v \in F^0_{\textnormal{inf}}$ and $f \in r^{-1}(v)$.} \end{cases}$$ By a symmetric argument, $\widetilde{\phi}^{-1}$ is well defined and continuous, and it is straightforward to check that $\widetilde{\phi}^{-1}$ is the inverse of $\widetilde{\phi}$.  Thus $\widetilde{\phi}$ is a homeomorphism.
\end{proof}

\begin{theorem} \label{shifts-conjugate-implies-groupoids-iso-thm}
Let $E$ and $F$ be countable graphs with no sinks and no sources.  If $X_E \cong X_F$, then $\G_E \cong \G_F$.
\end{theorem}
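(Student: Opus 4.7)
The plan is to use the homeomorphism $\widetilde{\phi} : \partial E \to \partial F$ from Proposition~\ref{path-homeo-from-conj-prop} to define a groupoid isomorphism $\Phi : \G_E \to \G_F$ by the diagonal formula
\[
\Phi(x,k,y) \;:=\; \bigl(\widetilde{\phi}(x),\, k,\, \widetilde{\phi}(y)\bigr).
\]
Because the groupoid operations on $\G_E$ and $\G_F$ (composition, inversion, and the identification of the unit space with $\partial E$ and $\partial F$) act only on the first and third coordinates and add, negate, or ignore the second, the map $\Phi$ is automatically a groupoid homomorphism once it is shown to take values in $\G_F$; applying the same construction to $\widetilde{\phi}^{-1}$ yields a two-sided set-theoretic inverse $\Psi : \G_F \to \G_E$, which gives bijectivity.

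To verify that $\Phi(x,k,y) \in \G_F$, I would write $(x,k,y) = (\alpha\gamma,\, l(\alpha)-l(\beta),\, \beta\gamma)$ and split on the type of the common tail $\gamma$. If $\gamma$ is an infinite path or a finite path of positive length ending at an infinite emitter, then $\widetilde{\phi}$ agrees with $\phi$ on both $x$ and $y$, and from $\phi \circ \sigma = \sigma \circ \phi$ together with length preservation I get $\sigma^{l(\alpha)}(\phi(x)) = \phi(\gamma) = \sigma^{l(\beta)}(\phi(y))$, so that $\phi(x) = \alpha'\gamma'$ and $\phi(y) = \beta'\gamma'$ share the common tail $\gamma' = \phi(\gamma) \in \partial F$ with $l(\alpha') - l(\beta') = k$. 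If instead $\gamma = v \in E^0_\textnormal{inf}$, then iterating Lemma~\ref{shift-fact-lem} along $\alpha$ gives $\phi(\alpha) = b_1 b_2 \cdots b_{l(\alpha)-1}\, \phi(\alpha_{l(\alpha)})$, whence $r(\phi(\alpha)) = r(\phi(\alpha_{l(\alpha)})) = \widetilde{\phi}(v)$ by the well-definedness of $\widetilde{\phi}$ at $v$ (its independence of the chosen incoming edge); the same argument applies to $\beta$, so $\widetilde{\phi}(x)$ and $\widetilde{\phi}(y)$ share the common tail $\widetilde{\phi}(v) \in F^0_\textnormal{inf}$.

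The main obstacle is continuity of $\Phi$. Given a basic open $Z(\alpha',\beta',F') \subseteq \G_F$ containing $\Phi(x,k,y)$, I want a basic open $Z(\alpha,\beta,F) \subseteq \G_E$ containing $(x,k,y)$ with $\Phi(Z(\alpha,\beta,F)) \subseteq Z(\alpha',\beta',F')$. Continuity of $\widetilde{\phi}$ supplies cylinders in $\partial E$ about $x$ and $y$ whose $\widetilde{\phi}$-images lie in $Z(\alpha',F')$ and $Z(\beta',F')$ respectively, and the technical problem is to realize both of these simultaneously as the source and range projections of a single basic open $Z(\alpha,\beta,F)$ of $\G_E$. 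My approach is to enlarge the prefixes by replacing $(\alpha,\beta,\gamma)$ with $(\alpha u,\beta u,\gamma')$ whenever $\gamma = u\gamma'$ has length exceeding $l(u)$, until the extensions capture the required cylinders about $x$ and $y$; the vertex-tail case is handled directly using the form of the topology of $\partial E$ at infinite emitters. The real subtlety is that conjugacies between shifts over infinite alphabets are not in general sliding block codes (in contrast to the row-finite setting of Section~\ref{SBC-row-finite-sec}), so the first $l(\alpha')$ edges of $\widetilde{\phi}(\alpha\gamma)$ can depend on all of $\gamma$; what rescues the argument is that every element of $Z(\alpha,\beta,F)$ shares the same tail, so continuity of $\widetilde{\phi}$ controls source and range in lockstep and forces the image tails to coincide with the $\gamma'$ of $\Phi(x,k,y)$. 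A symmetric argument for $\Psi$ then gives continuity of the inverse and completes the proof.
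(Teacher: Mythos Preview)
Your overall strategy matches the paper's: define the map by $(x,k,y)\mapsto(\widetilde\phi(x),k,\widetilde\phi(y))$, verify it lands in $\G_F$ via the shift-commuting relation and the well-definedness of $\widetilde\phi$ at infinite emitters, and build the inverse symmetrically from $\widetilde\phi^{-1}$. The case split you give for well-definedness is essentially the paper's.

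The real divergence is in the continuity argument. The paper does not chase basic open sets at all; instead it observes that the square
\[
\xymatrix{
\G_E \ar[r]^{\Phi} \ar[d]_{r_{\G_E}} & \G_F \ar[d]^{r_{\G_F}} \\
\partial E \ar[r]^{\widetilde\phi} & \partial F
}
\]
commutes, and then appeals to the fact that the range maps are local homeomorphisms and $\widetilde\phi$ is a homeomorphism to conclude that $\Phi$ is continuous. This sidesteps entirely the difficulty you flag about synchronising the source and range cylinders.

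Your direct approach is in the right spirit, but the step ``the image tails coincide with the $\gamma'$ of $\Phi(x,k,y)$'' is where it is not yet a proof. From $\sigma^{l(\alpha)}\phi(\alpha\gamma)=\phi(\gamma)=\sigma^{l(\beta)}\phi(\beta\gamma)$ you get that $\phi(\alpha\gamma)$ and $\phi(\beta\gamma)$ share the tail $\phi(\gamma)$ after shifts of length $l(\alpha)$ and $l(\beta)$; but to land in $Z(\alpha',\beta',F')$ you need them to share a tail after shifts of length $l(\alpha')$ and $l(\beta')$, and when $l(\alpha)>l(\alpha')$ the intervening segments of $\phi(\alpha\gamma)$ and $\phi(\beta\gamma)$ need not agree. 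Enlarging prefixes does not by itself close this gap, since (as you note) $\phi$ is not a sliding block code. The paper's local-homeomorphism argument is precisely the device that avoids this bookkeeping: once you know $\Phi(g)$ lies in a bisection $V\subseteq\G_F$, continuity on a small enough bisection $U\ni g$ reduces to continuity of $(r_{\G_F}|_V)^{-1}\circ\widetilde\phi\circ r_{\G_E}|_U$. If you want to complete your direct route, the missing ingredient is to argue that $\Phi$ maps a small enough bisection in $\G_E$ into a single bisection of $\G_F$, and that is exactly what the \'etale structure supplies.
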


\begin{proof}
Let $\phi : X_E \to X_F$ be a conjugacy, and define $H : \G_E \to \G_F$ by $$H(\alpha \gamma, l(\alpha) - l(\beta), \beta \gamma) := (\widetilde{\phi} (\alpha \gamma), l(\alpha) - l(\beta), \widetilde{\phi} (\beta \gamma))$$ where $\widetilde{\phi} : \Pa E \to \Pa F$ is the homeomorphism defined in Proposition~\ref{path-homeo-from-conj-prop}.

Observe that if either $\alpha \gamma$ or $\beta \gamma$ is an infinite emitter, then $(\widetilde{\phi} (\alpha \gamma), l(\alpha) - l(\beta), \widetilde{\phi} (\beta \gamma)) \in \G_F$ because $\widetilde{\phi}$ preserves lengths.  If $\gamma$ has positive length, then 
\begin{align*}
\sigma^{l(\alpha)} (\widetilde{\phi} (\alpha \gamma) ) &= \sigma^{l(\alpha)}( \phi (\alpha \gamma) )=  \phi (\sigma^{l(\alpha)} (\alpha \gamma) ) = \phi (\gamma) = \phi (\sigma^{l(\beta)} (\beta \gamma) )  = \\
&= \sigma^{l(\beta)}( \phi (\beta \gamma) ) = \sigma^{l(\beta)} (\widetilde{\phi} (\beta \gamma) ). 
\end{align*}
Hence $\widetilde{\phi} (\alpha \gamma) = \delta  \phi (\gamma)$ for some path $\delta$ in $F$ with $l(\delta) = l(\alpha)$ and $\widetilde{\phi} (\beta \gamma) = \epsilon  \phi (\gamma)$ for some path $\epsilon$ in $F$ with $l(\epsilon) = l(\beta)$, and hence $(\widetilde{\phi} (\alpha \gamma), l(\alpha) - l(\beta), \widetilde{\phi} (\beta \gamma)) \in \G_F$.  Thus $H$ does indeed take values in $\G_F$.

To see that $H$ is continuous, we observe that the following diagram commutes:
$$
\xymatrix{
\G_E \ar[r]^H \ar[d]_{r_{\G_E}} & \G_F \ar[d]^{r_{\G_F}} \\
\Pa E \ar[r]^{\widetilde{\phi}} & \Pa F
}
$$
$$ $$
where $r_{\G_E}$ and $r_{\G_F}$ denote the range maps on $\G_E$ and $\G_F$, respectively.  Since $r_{\G_E}$ and $r_{\G_F}$ are local homeomorphisms, and $\widetilde{\phi}$ is a homeomorphism by Proposition~\ref{path-homeo-from-conj-prop}, it follows that $H$ is continuous.

Finally, to see that $H$ is a homeomorphism, we simply define $H^{-1} : \G_F \to \G_E$ by $$H^{-1}(\alpha \gamma, l(\alpha) - l(\beta), \beta \gamma) := (\widetilde{\phi}^{-1} (\alpha \gamma), l(\alpha) - l(\beta), \widetilde{\phi}^{-1} (\beta \gamma))$$ and by a symmetric argument, we obtain that $H^{-1} : \G_F \to \G_E$ is a well-defined continuous map.  It is straightforward to check that $H^{-1}$ is an inverse for $H$, and thus $H$ is a homeomorphism.
\end{proof}

\begin{corollary} \label{graph-C*-iso-cor}
Let $E$ and $F$ be countable graphs with no sinks and no sources.  If $X_E \cong X_F$, then $C^*(E) \cong C^*(F)$.
\end{corollary}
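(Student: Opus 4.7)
The plan is to deduce this corollary immediately from Theorem~\ref{shifts-conjugate-implies-groupoids-iso-thm} together with the identification of the graph $C^*$-algebra with its associated groupoid $C^*$-algebra. First, I would invoke Theorem~\ref{shifts-conjugate-implies-groupoids-iso-thm} to obtain, from the hypothesis $X_E \cong X_F$, a topological groupoid isomorphism $H : \G_E \to \G_F$.

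Next, I would appeal to Paterson's result (mentioned in the remark following the definition of $\G_E$) that for any countable graph with no sinks, the groupoid $C^*$-algebra $C^*(\G_E)$ is canonically isomorphic to the graph $C^*$-algebra $C^*(E)$, and similarly $C^*(\G_F) \cong C^*(F)$. It is standard that an isomorphism of topological groupoids (which is in particular a homeomorphism that preserves the groupoid operations, hence also the Haar system up to the natural identification coming from the fact that both groupoids are \'etale with counting measures on the range/source fibers) induces an isomorphism of the associated groupoid $C^*$-algebras at the level of compactly supported functions, by pullback $f \mapsto f \circ H^{-1}$, and this extends to a $*$-isomorphism of the completions. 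Thus $H$ induces $C^*(\G_E) \cong C^*(\G_F)$.

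Chaining the three isomorphisms gives
\begin{equation*}
C^*(E) \cong C^*(\G_E) \cong C^*(\G_F) \cong C^*(F),
\end{equation*}
which is what we want. There is essentially no obstacle here: the substantive content is Theorem~\ref{shifts-conjugate-implies-groupoids-iso-thm} (already proved) and Paterson's groupoid model for $C^*(E)$ (cited from \cite{Pat}); the corollary is obtained by composing these. The only minor point to be careful about is that the isomorphism $H$ constructed in the proof of Theorem~\ref{shifts-conjugate-implies-groupoids-iso-thm} is a homeomorphism that intertwines range, source, multiplication, and inversion, so the induced map on $C_c$-functions is genuinely a $*$-homomorphism and extends to the $C^*$-completion.
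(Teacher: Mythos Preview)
Your proposal is correct and follows essentially the same approach as the paper's own proof: invoke Theorem~\ref{shifts-conjugate-implies-groupoids-iso-thm} to get $\G_E \cong \G_F$, then use Paterson's identification $C^*(\G_E) \cong C^*(E)$ together with the general fact that isomorphic topological groupoids yield isomorphic groupoid $C^*$-algebras. The extra detail you supply about the pullback on $C_c$-functions is a welcome elaboration, but the argument is the same.
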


\begin{proof}
This follows from the fact that for any graph $E$, the groupoid $C^*$-algebra $C^*(\mathcal{G}_E)$ is isomorphic to the graph $C^*$-algebra $C^*(E)$, and from the fact that isomorphic groupoids produce isomorphic groupoid $C^*$-algebras.
\end{proof}

\subsection{Leavitt Path Algebras of Countable Graphs} \label{LPA-subsec}

Leavitt path algebras are algebraic counterparts of graph $C^*$-algebra and they have attracted a great deal of attention in the past 8 years \cite{AAP}.  Here we show that if two countable graphs have shift spaces that are conjugate, then the Leavitt path algebras of the graphs are isomorphic.

\begin{definition} \label{Leavitt-def}
Let $E$ be a directed graph, and let $K$ be a field.  The \emph{Leavitt path algebra of $E$ with coefficients in $K$}, denoted $L_K(E)$,  is the universal $K$-algebra generated by a set $\{v : v \in E^0 \}$ of pairwise orthogonal idempotents, together with a set $\{e, e^* : e \in E^1\}$ of elements satisfying
\begin{enumerate}
\item $s(e)e = er(e) =e$ for all $e \in E^1$
\item $r(e)e^* = e^* s(e) = e^*$ for all $e \in E^1$
\item $e^*f = \delta_{e,f} \, r(e)$ for all $e, f \in E^1$
\item $v = \displaystyle \sum_{\{e \in E^1 : s(e) = v \}} ee^*$ whenever $0 < |s^{-1}(v)| < \infty$.
\end{enumerate}
\end{definition}

\begin{theorem}
Let $E$ and $F$ be countable graphs with no sinks and no sources.  If $X_E \cong X_F$, then $L_\C(E) \cong L_\C(F)$.
\end{theorem}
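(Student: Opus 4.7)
The plan is to deduce this result in parallel with Corollary~\ref{graph-C*-iso-cor} by passing through the groupoid isomorphism already established in Theorem~\ref{shifts-conjugate-implies-groupoids-iso-thm}. For any field $K$ and any countable graph $E$ with no sinks, the Leavitt path algebra $L_K(E)$ is isomorphic to the Steinberg algebra $A_K(\G_E)$ of the graph groupoid $\G_E$ (as established by Clark, Farthing, Sims, and Tomforde). Since $\G_E$ is an ample Hausdorff groupoid and isomorphisms of such groupoids induce isomorphisms of their Steinberg algebras over any coefficient field, the desired algebra isomorphism will follow automatically.

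Concretely, I would proceed in three steps. First, invoke Theorem~\ref{shifts-conjugate-implies-groupoids-iso-thm} to extract from the conjugacy $\phi : X_E \to X_F$ a topological groupoid isomorphism $H : \G_E \to \G_F$. Second, verify that $\G_E$ and $\G_F$ are both ample Hausdorff: Hausdorffness is part of Paterson's construction, and the basic sets $Z(\alpha,\beta,F)$ are compact open bisections, so the topology has a basis of compact open bisections. Third, define the induced map $H_* : A_\C(\G_E) \to A_\C(\G_F)$ on the $\C$-linear span of characteristic functions of compact open bisections by $f \mapsto f \circ H^{-1}$; this is a routine Steinberg-algebra functoriality computation and gives a $\C$-algebra isomorphism since $H$ is a homeomorphism respecting the groupoid operations. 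Composing with the Steinberg/Leavitt identifications on each side yields $L_\C(E) \cong L_\C(F)$.

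The main obstacle is ensuring that the Steinberg-algebra realization of $L_\C(E)$ applies in our setting of arbitrary (not necessarily row-finite) countable graphs with no sinks, and that the version of $\G_E$ used in that realization matches the one defined here. If one wished to avoid the groupoid machinery entirely, a direct approach would be to imitate the pattern used for $C^*(E)$: using the conjugacy $\phi$ together with the path-space homeomorphism $\widetilde{\phi}$ from Proposition~\ref{path-homeo-from-conj-prop}, one would construct, for each $v \in E^0$ and $e \in E^1$, elements of $L_\C(F)$ playing the roles of $v$, $e$, and $e^*$, verify the Leavitt relations (1)--(4) in Definition~\ref{Leavitt-def}, and invoke the universal property of $L_\C(E)$ to produce a homomorphism $L_\C(E) \to L_\C(F)$, with a symmetric argument furnishing an inverse. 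This alternative is more hands-on but requires the same careful bookkeeping with infinite emitters that appeared in the proof of Proposition~\ref{path-homeo-from-conj-prop}.
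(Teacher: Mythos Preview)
Your proposal is correct and follows essentially the same route as the paper: invoke Theorem~\ref{shifts-conjugate-implies-groupoids-iso-thm} to obtain a groupoid isomorphism $\G_E \cong \G_F$, use that isomorphic ample Hausdorff groupoids have isomorphic Steinberg algebras, and then appeal to the Clark--Farthing--Sims--Tomforde identification $A_\C(\G_E) \cong L_\C(E)$. The paper's proof is terser---it simply cites \cite[Proposition~4.3 and Remark~4.4]{CFST} for the last step---but the underlying argument is the same as yours.
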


\begin{proof}
If $X_E$ and $X_F$ are conjugate, then Theorem~\ref{shifts-conjugate-implies-groupoids-iso-thm} implies that the groupoids $\G_E$ and $\G_F$ are isomorphic.  It follows that the algebras $A(\G_E)$ and $A(\G_F)$ (as defined in \cite[Definition~3.3]{CFST}) are isomorphic.  However, \cite[Proposition~4.3]{CFST} and \cite[Remark~4.4]{CFST} imply that $A(\G_E) \cong L_\C(E)$ and $A(\G_F) \cong L_\C(F)$.
\end{proof}

\subsection{$C^*$-algebras and Leavitt Path Algebras of Row-Finite Graphs} \label{row-finite-graph-alg-subsec}

In this section we prove that row-finite graphs with conjugate edge shifts give rise to isomorphic $C^*$-algebras.  In this setting, we explicitly define the isomorphism (Theorem~\ref{ConjIso}).  Corollary~\ref{graph-C*-iso-cor} in Section~\ref{C*-alg-groupoid-subsec} is similar but distinct.  Theorem~\ref{ConjIso} and Corollary~\ref{graph-C*-iso-cor} both assume that the underlying graphs have no sinks.  Corollary~\ref{graph-C*-iso-cor} applies in the countable graph setting but assumes that the underlying graphs have no sources.  By contrast, Theorem~\ref{ConjIso} applies to row-finite graphs that may contain sources and explicitly constructs the isomorphism.  This result together with its generalization (Theorem~\ref{ConjIsoGen}) therefore recovers the classical fact that if two finite graphs with no sinks have conjugate edge shifts, then their Cuntz-Krieger algebras are isomorphic.

\begin{remark}
If $E$ is a row-finite graph, Property~(2) of the Cuntz-Krieger relations in Definition~\ref{graph-C*-def} is redundant.  If $E$ is a row-finite graph, a collection of elements $\{S_e, P_v: e \in E^1, v \in E^0\}$ in a $C^*$-algebra $B$ is a \emph{Cuntz-Krieger $E$-family} if $\{P_v: v \in E^0 \}$ is a collection of mutually orthogonal projections, $\{S_e: e \in E^1\}$ is a collection of partial isometries, and the following two relations are satisfied:
\begin{enumerate}
  \item[(CK1)] $S^*_e S_e= P_{r(e)}$ for all $e \in E^1$
  \item[(CK2)] $P_v = \Sigma_{s(e)=v} S_e S_e^*$ whenever $v \in E^0$ is not a sink.
\end{enumerate}
\end{remark}

In proof of the following theorem it will be convenient for us to use the following notation:  If $E = (E^0, E^1, r, s)$ is a graph and $v \in E^0$, we write $$vE^n := \{ \alpha \in E^n : s(\alpha) = v \}.$$

\begin{theorem} \label{ConjIso}
Let $E$ and $F$ be row-finite directed graphs with no sinks.  Let $\{s_e, p_v\}$ be a generating Cuntz-Krieger $E$-family for $C^*(E)$ and $\{t_e, q_v\}$ be a generating Cuntz-Krieger $F$-family for $C^*(F)$.  If $\psi: X_F \to X_E$ is a $1$-block conjugacy with a bounded inverse $\phi : X_E \to X_F$, and if $\Phi : B_n(X_E) \to F^1$ is the $n$-block map associated to $\phi$, then there exists a $*$-isomorphism $\pi: C^*(E) \to C^*(F)$ with
\begin{align*}
& \pi(s_e)=  \sum_{\substack{g \in F^1\\ \exists \alpha \in r(e)E^{n-1}\\ \Phi(e \alpha)=g}}{t_g} && \mbox{ and } & \pi(p_v)=  \sum_{\substack{g \in F^1\\ \exists \beta \in vE^n\\ \Phi(\beta)=g}}{t_g t_g^*}.
\end{align*}
\end{theorem}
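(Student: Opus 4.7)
The plan is to define a Cuntz-Krieger $E$-family in $C^*(F)$ via the formulas in the statement, invoke the universal property of $C^*(E)$ to produce the candidate $\pi$, and then establish bijectivity. A useful preliminary identity is $\Psi\circ\Phi(\beta) = \beta_1$ for every $\beta \in B_n(X_E) = E^n$: choosing any infinite extension $\beta\delta \in X_E$ (possible since $E$ has no sinks), one reads off $\beta_1 = \psi(\phi(\beta\delta))_1 = \Psi(\Phi(\beta))$. With this identity, the index sets in the statement become $\Psi^{-1}(e)$ and $\{g \in F^1 : s(\Psi(g)) = v\}$ respectively, and both are finite because $E$ is row-finite.

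The central structural claim will be a \emph{range-injectivity} property of $\Psi$: if $g, g' \in \Psi^{-1}(e)$ satisfy $r(g) = r(g')$, then for any infinite path $\delta \in X_F$ with $s(\delta) = r(g)$ we have $\psi(g\delta) = e\,\psi(\delta) = \psi(g'\delta)$, and injectivity of $\psi$ forces $g = g'$. Combined with the standard vanishing $t_g t_{g'}^* = 0$ for $r(g) \neq r(g')$ in $C^*(F)$, this produces
\[
\pi(s_e)^*\pi(s_e) = \sum_{g \in \Psi^{-1}(e)} q_{r(g)}, \qquad \pi(s_e)\pi(s_e)^* = \sum_{g \in \Psi^{-1}(e)} t_g t_g^*,
\]
and makes the mutual orthogonality of the $\pi(p_v)$'s, the mutual orthogonality of the ranges of the $\pi(s_e)$'s, and the (CK2) relation $\pi(p_v) = \sum_{s(e)=v}\pi(s_e)\pi(s_e)^*$ evident by re-indexing. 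The subtler (CK1) relation $\pi(s_e)^*\pi(s_e) = \pi(p_{r(e)})$ will reduce to exhibiting a bijection between the pairs $(g,h) \in F^1 \times F^1$ with $\Psi(g)=e$ and $s(h)=r(g)$, and the single edges $h \in F^1$ with $s(\Psi(h))=r(e)$, after expanding each $q_{r(g)} = \sum_{s(h)=r(g)} t_h t_h^*$ via (CK2) in $C^*(F)$. Uniqueness of $g$ given $h$ is the same range-injectivity argument, and existence follows by applying $\phi$ to $e\,\Psi(h)\,\psi(\delta') \in X_E$ for any infinite extension $h\delta' \in X_F$.

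Once the Cuntz-Krieger family is verified and $\pi$ is obtained from universality, injectivity will follow from the gauge-invariant uniqueness theorem: each $\pi(p_v)$ is nonzero because $E$ has no sinks produces some $\beta \in vE^n$, contributing the nonzero summand $t_{\Phi(\beta)}t_{\Phi(\beta)}^*$, and the canonical gauge action on $C^*(F)$ satisfies $\beta_z(\pi(s_e)) = z\pi(s_e)$ and $\beta_z(\pi(p_v)) = \pi(p_v)$ by inspection of the formulas. For surjectivity it suffices to place every $t_g t_g^*$ in $\pi(C^*(E))$, since then $t_g = (t_g t_g^*)\pi(s_{\Psi(g)})$ (using $t_g t_g^* t_h = \delta_{g,h} t_g$) and $q_v = \sum_{s(f)=v} t_f t_f^*$ are in the image, and these generate $C^*(F)$. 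Extending range-injectivity to length-$n$ paths gives $\pi(s_\mu)\pi(s_\mu)^* = \sum_{\gamma \in (\Psi^{(n)})^{-1}(\mu)} t_\gamma t_\gamma^*$ for $\mu \in E^n$, where $\Psi^{(n)}$ denotes entrywise application of $\Psi$; each such $\gamma$ begins with $\Phi(\mu)$, so $\{\gamma \in F^n : \gamma_1 = g\}$ partitions as $\bigsqcup_{\mu \in \Phi^{-1}(g)} (\Psi^{(n)})^{-1}(\mu)$. Summing and applying the iterated (CK2) identity $\sum_{\eta \in r(g)F^{n-1}} t_\eta t_\eta^* = q_{r(g)}$ yields
\[
\sum_{\mu \in \Phi^{-1}(g)} \pi(s_\mu)\pi(s_\mu)^* \;=\; \sum_{\gamma \in F^n,\ \gamma_1 = g} t_\gamma t_\gamma^* \;=\; t_g\,q_{r(g)}\,t_g^* \;=\; t_g t_g^*,
\]
placing $t_g t_g^*$ in $\pi(C^*(E))$ and completing surjectivity.

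The principal obstacle will be the combinatorial bijection underlying $\pi(s_e)^*\pi(s_e) = \pi(p_{r(e)})$: the rearrangement of the double sum produced by expanding $q_{r(g)}$ is not a formal identity on indices but genuinely uses both halves of the conjugacy---injectivity of $\psi$ for uniqueness of $g$, and the $n$-block inverse $\phi$ for existence of $g$. Once this correspondence is secured, the remaining $C^*$-algebraic arguments (universality, gauge-invariant uniqueness, and iterated (CK2)) are routine.
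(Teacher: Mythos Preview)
Your proof is correct and follows the same architecture as the paper's: verify that the proposed elements form a Cuntz--Krieger $E$-family in $C^*(F)$, invoke universality to produce $\pi$, apply the gauge-invariant uniqueness theorem for injectivity, and then establish surjectivity. Your re-indexing of the sums via $\Psi^{-1}(e)$ and $\{g : s(\Psi(g)) = v\}$ using the identity $\Psi\circ\Phi(\beta)=\beta_1$ is exactly what the paper does implicitly, and your ``range-injectivity'' of $\Psi$ is the same observation the paper makes (phrased there as $t_g t_h^* = 0$ unless $g=h$). The (CK1) bijection you describe coincides with the paper's proof that its index sets $K$ and $L$ agree.

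The genuine difference is in surjectivity. The paper exhibits each $t_a$ directly as the image of an explicit element $s_e \sum_f s_f s_f^*$ (with $e = \Psi(a)$ and the inner sum running over edges $f$ for which $\Phi(ef\alpha) = a$ for some $\alpha$), and then performs a somewhat lengthy computation to simplify that image to $t_a q_{r(a)} = t_a$. Your route is cleaner: you first place each range projection $t_g t_g^*$ in the image by recognising $\sum_{\mu \in \Phi^{-1}(g)} \pi(s_\mu s_\mu^*) = t_g t_g^*$ via the partition of $\{\gamma \in F^n : \gamma_1 = g\}$ induced by $\Psi^{(n)}$, and then recover $t_g$ as $(t_g t_g^*)\,\pi(s_{\Psi(g)})$. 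This trades one explicit computation for a structural observation about how $\Phi$ and $\Psi^{(n)}$ interact on length-$n$ paths, and it makes the role of the $n$-block inverse more transparent. Both approaches ultimately rely on the same range-injectivity fact and iterated (CK2); yours just packages the bookkeeping more symmetrically.
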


\begin{proof}
Fix $e \in E^1$ and define 
\begin{align*}
& S_e:=  \sum_{\substack{g \in F^1\\ \exists \alpha \in r(e)E^{n-1}\\ \Phi(e \alpha)=g}}{t_g} 
&P_v:= \sum_{\substack{g \in F^1\\ \exists \beta \in vE^n\\ \Phi(\beta)=g}}{t_g t_g^*}.
\end{align*}
We have
\begin{align*}
  S^*_e S_e &= \sum_{\substack{g \in F^1\\ \exists \alpha \in r(e)E^{n-1}\\ \Phi(e \alpha)=g}}{t_g}^* \sum_{\substack{h \in F^1 \\ \exists \mu \in r(e)E^{n-1} \\ \Phi(e \mu)=h}}{t_h} \\
            &  = \sum_{\substack{g,h \in F^1\\ \exists \alpha,\mu \in r(e)E^{n-1}\\ \Phi(e \alpha)=g \\ \Phi(e \mu)=h}}{t^*_g t_h} 
             = \sum_{\substack{g \in F^1\\ \exists \alpha \in r(e)E^{n-1}\\ \Phi(e \alpha)=g}}{t^*_g t_g} \mbox{ \quad (by the CK-relations)} \\
                    &= \sum_{\substack{g \in F^1\\ \exists \alpha \in r(e)E^{n-1}\\ \Phi(e \alpha)=g}}{q_{r(g)}}
                     = \sum_{\substack{g \in F^1\\ \exists \alpha \in r(e)E^{n-1}\\ \Phi(e \alpha)=g}} \sum_{\substack{h \in F^1\\ s(h)=r(g)}}{t_h t^*_h} \mbox{ \quad (by the CK-relations)}\\
                     &= \sum_{\substack{h \in F^1\\ \exists g \in F^1\\ \exists \alpha \in r(e)E^{n-1}\\s(h)=r(g)\\ \Phi(e \alpha)=g}}{t_h t^*_h}.
\end{align*}
We show that the set 
 \[K: =  \{h \in F^1 : \exists g \in  F^1 \mbox{ and } \alpha \in r(e)E^{n-1} \mbox{ with } s(h)=r(g) \mbox{ and } \Phi(e \alpha)=g \} \]
 is equal to
  \[L:= \{h \in F^1 : \exists \beta \in r(e)E^n \mbox{ with } \Phi(\beta)=h \}.  \]
Let $\Psi$ be the $1$-block map that defines $\psi$.  Since $\psi \circ\phi= \operatorname{Id}_{X_E}$ 
\begin{equation} \label{BMcomp}
\Psi \circ \Phi: B_n(X_E) \to B_1(X_E) \mbox{ is defined by } x_i \dots x_{i+n-1} \mapsto x_i.
\end{equation}  
Let $h \in K$.  Then $ s(h)=r(g)$, which implies $\Psi(g) \Psi(h) \in E^2$.  Observe that $e= \Psi(\Phi (e \alpha))= \Psi(g)$.  There exists $\beta \in E^n$ such that $\Phi(\beta)=h$. Thus $\beta_1 = \Psi(\Phi(\beta))= \Psi(h)$.  Therefore $\Psi(g) \Psi(h)=e \beta_1 \in E^2$.  Hence $h \in L$.  To show the other containment let $h \in L$.  Then $e \beta \in E^{n+1}$.  Define $g:=  \Phi(e \beta_1 \dots \beta_{n-1})$.  So $\Phi(e \beta_1 \dots \beta_{n-1}) \Phi(\beta)=gh \in F^2$, thus $s(h)=r(g)$.  Therefore $h \in K$.  Now we have
\begin{align*}
  S^*_e S_e = \sum_{\substack{h \in F^1\\ \exists g \in F^1\\ \exists \alpha \in r(e)E^{n-1}\\s(h)=r(g)\\ \Phi(e \alpha)=g}}{t_h t^*_h}
                     = \sum_{\substack{h \in F^1\\ \exists \beta \in r(e)E^n\\ \Phi(\beta)=h}}{t_h t^*_h}
                     = P_{r(e)}.
\end{align*}

By definition
\[\sum_{s(e)=v} S_e S_e^* =  \sum_{s(e)=v} \Bigg(\sum_{\substack{g \in F^1\\ \exists \alpha \in r(e)E^{n-1}\\ \Phi(e \alpha)=g}}{t_g}  \sum_{\substack{h \in F^1\\ \exists \mu \in r(e)E^{n-1}\\ \Phi(e \mu)=h}}{t_h^*} \Bigg). \]
Notice that $t_g t_h^*=0$ unless $r(g)=r(h)$ by the Cuntz-Krieger relations. Suppose $g,h \in F^1$ such that $r(g)=r(h)$.  Since $F$ has no sinks there exists a path $x_1 x_2 \ldots \in F^{\infty}$ such that $gx_1 x_2 \ldots, hx_1 x_2 \ldots \in  F^{\infty}$.  The map $\phi$ is surjective so there exists $f_1 f_2 \ldots, b_1 b_2 \ldots \in E^{\infty}$ such that $\phi(ef_1 f_2 \ldots)=gx_1 x_2 \ldots$ and $\phi(eb_1 b_2 \ldots)=hx_1 x_2 \ldots$.  By Equation~\eqref{BMcomp} 
\begin{equation} \label{1stEdge}
f_i= \Psi \circ \Phi (f_i \cdots f_{n+i-1})= \Psi(x_i)=  \Psi \circ \Phi (b_i \cdots b_{n+i-1})= b_i
\end{equation}
for all $i \in \N$.  Therefore $gx_1 x_2 \ldots=\phi(ef_1 f_2 \ldots)=\phi(eb_1 b_2 \ldots)=hx_1 x_2 \ldots$, which implies $g=h$.  Thus $t_g t_h^*=0$ for all $g \neq h$.  Hence
\begin{equation} \label{g=h}
  \sum_{s(e)=v} \Bigg(\sum_{\substack{g \in F^1\\ \exists \alpha \in r(e)E^{n-1}\\ \Phi(e \alpha)=g}}{t_g}  \sum_{\substack{h \in F^1\\ \exists \mu \in r(e)E^{n-1}\\ \Phi(e \mu)=h}}{t_h^*} \Bigg) =  \sum_{s(e)=v} \sum_{\substack{g \in F^1\\ \exists \alpha \in r(e)E^{n-1}\\ \Phi(e \alpha)=g}}{t_g t_g^*}.
\end{equation}
By Equation~\eqref{1stEdge}
\[\sum_{s(e)=v} S_e S_e^* = \sum_{s(e)=v} \sum_{\substack{g \in F^1\\ \exists \alpha \in r(e)E^{n-1}\\ \Phi(e \alpha)=g}}{t_g t_g^*} = \sum_{\substack{g \in F^1\\ \exists e \alpha \in vE^n\\ \Phi(e \alpha)=g}}{t_g t_g^*}= P_v.\]
Therefore $\{S_e, P_v \}$ is a Cuntz-Krieger $E$-family in $C^*(F)$ and there exists a $*$-homomorphism $\pi: C^*(E) \to C^*(F)$ with 
\begin{align*}
& \pi(s_e)=  \sum_{\substack{g \in F^1\\  \exists \alpha \in r(e)E^{n-1}\\ \Phi(e \alpha)=g}}{t_g} && \mbox{ and } & \pi(p_v)=  \sum_{\substack{g \in F^1\\ \exists \beta \in vE^n\\ \Phi(\beta)=g}}{t_g t_g^*}.
\end{align*}

To see that $\pi$ is injective we check that the hypotheses of the gauge-invariant uniqueness theorem hold \cite[Theorem~2.1]{BHRS}.  By assumption, the projections $q_v \in C^*(F)$ are all nonzero.  Observe that for each $v \in E^0$, $\pi(p_v)$ is the sum of mutually orthogonal projections.  So $\pi(p_v)=0$ if and only if each summand is zero, which is not possible.  We know that $C^*(E)$ has a gauge action $\gamma^E$ and $C^*(F)$ has a gauge action $\gamma^F$.  To check that $\pi \circ \gamma_z^E=\gamma^F_z \circ \pi$ it suffices to check on generators.  It is clear that this relation holds for the projections $p_v$.  Note that
\begin{align*}
 \pi \circ \gamma^E_z (s_e) &= \pi(zs_e)= z(\pi(s_e))= z \sum_{\substack{g \in F^1\\ \exists \alpha \in r(e)E^{n-1}\\ \Phi(e \alpha)=g}}{t_g}  = \sum_{\substack{g \in F^1\\ \exists \alpha \in r(e)E^{n-1}\\ \Phi(e \alpha)=g}}{zt_g} \\
 &=  \sum_{\substack{g \in F^1\\ \exists \alpha \in r(e)E^{n-1}\\ \Phi(e \alpha)=g}}{\gamma^F_z (t_g)} = \gamma^F_z \Bigg(  \sum_{\substack{g \in F^1\\  \exists \alpha \in r(e)E^{n-1}\\ \Phi(e \alpha)=g}}{t_g} \Bigg)=  \gamma^F_z \circ \pi (s_e).
\end{align*}
Therefore, the gauge-invariant uniqueness theorem implies that $\pi$ is injective.

To see that $\pi$ is surjective it suffices to show that for all $g \in F^1$ the element $t_g$ is in the image of $\pi$.  Fix $a \in F^1$.  By Equation~\eqref{1stEdge} there exists a unique edge $e \in E^1$ such that for all $\alpha \in E^n$ with $\Phi(\alpha)=a$ we have $\alpha_1=e$.  Observe that
\begin{align*}
\pi \Bigg(s_e & \sum_{\substack{f \in r(e)E^1\\ \exists \alpha \in r(f)E^{n-2}\\ \Phi(ef\alpha)=a}}  s_f s^*_f \Bigg)\\
  &=  \sum_{\substack{b \in F^1\\ \exists \mu \in r(e)E^{n-1}\\ \Phi(e \mu)=b}}{t_b} \sum_{\substack{f \in r(e)E^1\\ \exists \alpha \in r(f)E^{n-2}\\ \Phi(ef\alpha)=a}} \Bigg[  \sum_{\substack{g \in F^1\\ \exists \beta \in r(f)E^{n-1}\\ \Phi(f \beta)=g}}{t_g}  \sum_{\substack{h \in F^1\\ \exists \nu \in r(f)E^{n-1}\\ \Phi(f \nu)=h}}{t_h^*} \Bigg] \\
  &=  \sum_{\substack{b \in F^1\\ \exists \mu \in r(e)E^{n-1}\\ \Phi(e \mu)=b}}{t_b} \sum_{\substack{f \in r(e)E^1\\ \exists \alpha \in r(f)E^{n-2}\\ \Phi(ef\alpha)=a}} \Bigg[ \sum_{\substack{g \in F^1\\ \exists \beta \in r(f)E^{n-1}\\ \Phi(f \beta)=g}}{t_g t_g^*} \Bigg] \mbox{ \quad by Equation~\eqref{g=h}}.
\end{align*}
Notice that $t_a$ is in the sum \[\sum_{\substack{b \in F^1\\ \exists \mu \in r(e)E^{n-1}\\ \Phi(e \mu)=b}}{t_b}.\]  Also observe that in the sum
\[ \sum_{\substack{f \in r(e)E^1\\ \exists \alpha \in r(f)E^{n-2}\\ \Phi(ef\alpha)=a}} \sum_{\substack{g \in F^1\\ \exists \beta \in r(f)E^{n-1}\\ \Phi(f \beta)=g}}{t_g t_g^*} \]
we have that $\alpha= \beta_1 \cdots \beta_{n-2}$ therefore we have
\begin{align*}
 \sum_{\substack{b \in F^1\\ \exists \mu \in r(e)E^{n-1}\\ \Phi(e \mu)=b}}{t_b} \sum_{\substack{f \in r(e)E^1\\ \exists \alpha \in r(f)E^{n-2}\\ \Phi(ef\alpha)=a}} & \Bigg[ \sum_{\substack{g \in F^1\\ \exists \beta \in r(f)E^{n-1}\\ \Phi(f \beta)=g}}{t_g t_g^*} \Bigg] \\
   & = \sum_{\substack{b \in F^1\\ \exists \mu \in r(e)E^{n-1}\\ \Phi(e \mu)=b}}{t_b} \Bigg(  \sum_{\substack{g \in F^1\\ f \in F^1\\ \exists \beta \in r(f)E^{n-1}\\ \Phi(f \beta)=g \\ \Phi(ef \beta_1 \cdots \beta_{n-2})=a}}{t_g t_g^*} \Bigg)\\
&  =  \sum_{\substack{b \in F^1\\ \exists \mu \in r(e)E^{n-1}\\ \Phi(e \mu)=b}}{t_b} \Bigg(  \sum_{\substack{g \in F^1\\ \exists f \beta \in r(e)E^n\\ \Phi(f \beta)=g \\ \Phi(ef \beta_1 \cdots \beta_{n-2})=a}}{t_g t_g^*} \Bigg).
\end{align*}
It is evident that the set 
\[ \{g \in F^1 : \exists f \beta \in r(e)E^n \mbox{ such that } \Phi(f \beta)=g, \mbox{ and }  \Phi(ef \beta_1 \cdots \beta_{n-2})=a \} \] 
is contained in the set $\{ g \in F^1: s(g)=r(a)\}$.  Therefore we show the other containment.  Let $g \in F^1$ such that $s(g)=r(a)$.  Then $ag \in F^2$.  Thus there exists $\nu \in r(e)E^n$ such that $\Phi(e \nu_1 \cdots \nu_{n-1})=a$ and $\Phi(\nu)=g$.  Therefore
\begin{align*}
 \sum_{\substack{b \in F^1\\ \exists \mu \in r(e)E^{n-1}\\ \Phi(e \mu)=b}}{t_b} \Bigg(  \sum_{\substack{g \in F^1\\ \exists f \beta \in r(e)E^n\\ \Phi(f \beta)=g \\ \Phi(ef \beta_1 \cdots \beta_{n-2})=a}}{t_g t_g^*} \Bigg)  &=  \sum_{\substack{b \in F^1\\ \exists \mu \in r(e)E^{n-1}\\ \Phi(e \mu)=b}}{t_b} \Bigg(  \sum_{r(a)=s(g)} t_g t_g^* \Bigg) \\
  &=  \sum_{\substack{b \in F^1\\ \exists \mu \in r(e)E^{n-1}\\ \Phi(e \mu)=b}}{t_b} q_{r(a)} = t_a q_{r(a)} = t_a. 
\end{align*}
\end{proof}

In order to simplify the description of the $C^*$-algebra isomorphism $\pi$ in Theorem~\ref{ConjIso} we assumed that $\psi: X_F \to X_E$ is a $1$-block conjugacy.   This hypothesis can be replaced with the more general assumption that $\psi$ is a bounded conjugacy.   

\begin{definition}
Let $E=(E^0, E^1, r, s)$ be a graph, and let $N \in \N$.  The \emph{$N$\textsuperscript{th} higher block graph} $E^{[N]}$ is the graph whose vertex set is $(E^{[N]})^0 := E^{N-1}$,  whose edge set is $(E^{[N]})^1 := E^{N}$, and for $e_1 \cdots e_{N} \in E^{N}$ the range and source maps are defined by $r^{[N]}(e_1 \cdots e_{N}) := e_2 \ldots e_{N}$ and $s^{[N]}(e_1 \cdots e_{N}) := e_1 \ldots e_{N-1}$.
\end{definition}

\begin{lemma} \label{HBG}
Let $E$ be a row-finite graph with no sinks and let $N \in \N$.  Then $(X_E)^{[N]} = X_{E^{[N]}}$, and $C^*(E) \cong C^*(E^{[N]})$.
\end{lemma}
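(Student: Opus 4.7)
The plan is to establish three things in sequence: (i) that $E^{[N]}$ is itself a row-finite graph with no sinks, (ii) that as subsets of $\Sigma_{E^N}$ the shift spaces $(X_E)^{[N]}$ and $X_{E^{[N]}}$ coincide, and (iii) that the $C^*$-algebra isomorphism follows from Theorem~\ref{ConjIso}.

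For (i), the outgoing edges at a vertex $v = e_1 \ldots e_{N-1}$ of $E^{[N]}$ are precisely the length-$N$ paths of the form $e_1 \ldots e_{N-1} f$ with $s(f) = r(e_{N-1})$. Row-finiteness of $E$ makes this set finite, and the no-sinks hypothesis on $E$ makes it nonempty, so $E^{[N]}$ is row-finite with no sinks; consequently Proposition~\ref{description-of-edge-shift-prop} applies to give $X_{E^{[N]}}$ as a shift space over the alphabet $(E^{[N]})^1 = E^N$.

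For (ii), note first that $B_N(X_E) = E^N$, so both shifts sit in $\Sigma_{E^N}$. A sequence $f_1 f_2 \ldots \in \Sigma_{E^N}^{\textnormal{inf}}$ lies in $X_{E^{[N]}}^{\textnormal{inf}}$ precisely when $r^{[N]}(f_i) = s^{[N]}(f_{i+1})$ for all $i$, which is the progressive-overlap condition from Definition~\ref{N-block-code-def}. Any such sequence is uniquely expressed as $\phi_N(e_1 e_2 \ldots)$ with $f_i = e_i \ldots e_{i+N-1}$, and conversely every infinite path in $E$ gives a sequence satisfying that overlap condition. Thus $X_{E^{[N]}}^{\textnormal{inf}} = \phi_N(X_E^{\textnormal{inf}}) = (X_E)^{[N],\textnormal{inf}}$. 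Both $(X_E)^{[N]}$ (via Proposition~\ref{NSS-cong}) and $X_{E^{[N]}}$ are row-finite shift spaces over $E^N$, so each is either $\Xinf$ or $\Xinf \cup \{\0\}$ by Proposition~\ref{rf-char-prop}, with the latter case occurring exactly when the alphabet $E^N$ is infinite (equivalently, when $E^1$ is infinite). Hence the finite parts also agree, and Corollary~\ref{Xinf-determines-X-cor} yields $(X_E)^{[N]} = X_{E^{[N]}}$.

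For (iii), Proposition~\ref{NSS-cong} says $\phi_N : X_E \to (X_E)^{[N]} = X_{E^{[N]}}$ is a conjugacy, and Proposition~\ref{higher-block-code-is-conj-prop} identifies its inverse $\psi := \phi_N^{-1} : X_{E^{[N]}} \to X_E$ as the $1$-block code sending $f_1 \ldots f_N \mapsto f_1$. Since $\phi_N$ is an $N$-block code, $\psi$ has bounded inverse; both $E$ and $E^{[N]}$ are row-finite with no sinks, so the hypotheses of Theorem~\ref{ConjIso} are met with $F := E^{[N]}$, yielding the desired $*$-isomorphism $C^*(E) \cong C^*(E^{[N]})$.

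The main obstacle is the bookkeeping in step (ii): one must ensure that the bijection between infinite paths given by the higher block code really does land on exactly the edge-shift elements of $E^{[N]}$, and that the presence or absence of the empty sequence $\0$ is consistent on both sides. Once this identification is set up correctly, step (iii) is essentially a packaging of the higher-block-code conjugacy into the framework of Theorem~\ref{ConjIso}.
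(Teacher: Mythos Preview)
Your argument is correct. For the shift-space equality $(X_E)^{[N]} = X_{E^{[N]}}$ you follow exactly the paper's line---identify the common alphabet $E^N$, observe that the infinite sequences in each space are precisely those whose successive $N$-blocks overlap progressively, and then invoke Corollary~\ref{Xinf-determines-X-cor}---only with more of the details spelled out (the paper leaves step~(i) and the appeal to Corollary~\ref{Xinf-determines-X-cor} implicit).

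For the $C^*$-algebra isomorphism you take a genuinely different route. The paper simply cites \cite[Theorem~3.1]{Bates} as a black box, whereas you deduce $C^*(E)\cong C^*(E^{[N]})$ internally from Theorem~\ref{ConjIso}, using that $\phi_N^{-1}:X_{E^{[N]}}\to X_E$ is a $1$-block conjugacy with bounded inverse $\phi_N$. This is legitimate---Theorem~\ref{ConjIso} is proved before Lemma~\ref{HBG} and does not depend on it---and it has the virtue of making the argument self-contained within the paper. The tradeoff is that the Bates reference actually gives an explicit isomorphism in terms of the original generators, which is what is later exploited in Theorem~\ref{ConjIsoGen}; your route also produces an explicit isomorphism via the formulas in Theorem~\ref{ConjIso}, so nothing is lost.
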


\begin{proof}
The letters for $(X_{E})^{[N]}$ are the $N$-blocks from $X_E$, which are the paths of length $N$ in $E$.  However, these are also the letters for $X_{E^{[N]}}$.  An infinite sequence of these letters is in either shift space precisely when the letters (i.e., $N$-blocks) overlap progressively.  Thus  $(X_E)^{[N]} = X_{E^{[N]}}$.

The fact that $C^*(E) \cong C^*(E^{[N]})$ is \cite[Theorem~3.1]{Bates}.
\end{proof}

\begin{theorem} \label{ConjIsoGen}
If $E$ and $F$ are row-finite graphs with no sinks, and if $\psi : X_{F} \to X_{E}$ is a bounded conjugacy with bounded inverse, then $C^*(E) \cong C^*(F)$ via an explicit isomorphism.
\end{theorem}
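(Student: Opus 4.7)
The plan is to reduce the general bounded-conjugacy case to the $1$-block conjugacy case already handled in Theorem~\ref{ConjIso}, via recoding through a higher block presentation.

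First, since $\psi : X_F \to X_E$ is a bounded shift morphism and $X_F$ is row-finite, Theorem~\ref{SBC-ff-shift-morphism-thm} implies that $\psi$ is an $N$-block code for some $N \in \N$. Applying Proposition~\ref{recodeone} (with the roles of $X$ and $Y$ played by $X_F$ and $X_E$), we obtain a $1$-block code $\psi^{[N]} : X_F^{[N]} \to X_E$ together with the conjugacy $\phi_N : X_F \to X_F^{[N]}$ such that $\psi^{[N]} \circ \phi_N = \psi$. By Remark~\ref{rem-recodeone}, $\psi^{[N]}$ is itself a conjugacy, and since $(\psi^{[N]})^{-1} = \phi_N \circ \psi^{-1}$ is a composition of bounded sliding block codes (Proposition~\ref{bounded-composed-is-bounded-prop} applies, using that $\psi^{-1}$ is bounded by hypothesis and $\phi_N$ is an $N$-block code), the inverse of $\psi^{[N]}$ is also a bounded sliding block code.

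Next, by Lemma~\ref{HBG} we have $X_F^{[N]} = X_{F^{[N]}}$, so $\psi^{[N]}$ may be regarded as a $1$-block conjugacy $X_{F^{[N]}} \to X_E$ with bounded inverse. It is straightforward to verify that $F^{[N]}$ is a row-finite graph with no sinks: a vertex $e_1 \cdots e_{N-1}$ of $F^{[N]}$ emits the edges $e_1 \cdots e_{N-1} f$ with $f \in s^{-1}(r(e_{N-1}))$, and this set is finite and nonempty because $F$ is row-finite with no sinks. Hence the hypotheses of Theorem~\ref{ConjIso} are satisfied with $F$ replaced by $F^{[N]}$, and we obtain an explicit $*$-isomorphism $\pi_1 : C^*(E) \to C^*(F^{[N]})$, given by the formulas in Theorem~\ref{ConjIso} using the $1$-block map for $\psi^{[N]}$.

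Finally, Lemma~\ref{HBG} also provides an explicit $*$-isomorphism $\pi_2 : C^*(F^{[N]}) \to C^*(F)$ (this is \cite[Theorem~3.1]{Bates}, which produces a concrete formula in terms of the Cuntz–Krieger families). The composition $\pi_2 \circ \pi_1 : C^*(E) \to C^*(F)$ is the desired explicit isomorphism. No step here is an obstacle of substance since each ingredient has been prepared in advance; the only point that requires a small verification is the boundedness of $(\psi^{[N]})^{-1}$ (so that Theorem~\ref{ConjIso} applies rather than just Corollary~\ref{graph-C*-iso-cor}), which is handled by Proposition~\ref{bounded-composed-is-bounded-prop}.
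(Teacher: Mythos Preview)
Your proposal is correct and follows essentially the same approach as the paper: recode $\psi$ to a $1$-block conjugacy $\psi^{[N]} : X_{F^{[N]}} \to X_E$ via Proposition~\ref{recodeone} and Lemma~\ref{HBG}, apply Theorem~\ref{ConjIso} to obtain $C^*(E) \cong C^*(F^{[N]})$, and then compose with the isomorphism $C^*(F^{[N]}) \cong C^*(F)$ from Lemma~\ref{HBG}. Your write-up is in fact slightly more careful than the paper's, since you explicitly verify that $F^{[N]}$ is row-finite with no sinks and spell out via Proposition~\ref{bounded-composed-is-bounded-prop} why $(\psi^{[N]})^{-1}$ is bounded, whereas the paper simply cites Remark~\ref{rem-recodeone} for the latter.
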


\begin{remark} \label{explicit-iso-rem}
If $\psi$ is a conjugacy and $E$ and $F$ are finite graphs with no sinks, then $\psi$ and its inverse are automatically bounded.
\end{remark}

\begin{proof}
Proposition~\ref{recodeone} and Remark~\ref{rem-recodeone} show that there exists a $1$-block conjugacy $\psi^{[M]}: (X_{F})^{[M]} \to X_{E}$ with bounded inverse. Lemma~\ref{HBG} shows that $(X_{F})^{[M]} = X_{F^{[M]}}$.  Hence by Theorem~\ref{ConjIso} there exists an isomorphism $\pi : C^{*} (E) \to C^{*} (F^{[M]})$.  Lemma~\ref{HBG} also provides an isomorphism $\pi' : C^{*} (F^{[M]}) \to C^{*} (F)$.  Therefore $C^*(E) \cong C^*(F)$ via the explicit isomorphism $\pi' \circ \pi$.
\end{proof}


\begin{remark}
The proofs of Theorem~\ref{ConjIso},  \cite[Theorem~3.1]{Bates}, and Theorem~\ref{ConjIsoGen} go through \emph{mutatis mutandis} for Leavitt path algebras.  Thus similar results hold, and if $E$ and $F$ are finite graphs with no sinks and $X_E$ is conjugate to $X_F$ via a conjugacy $\psi : X_{F} \to X_{E}$, the method described in the proof of Theorem~\ref{ConjIsoGen} can be used to give an explicit isomorphism $\rho : L_K(E) \to L_K(F)$ between the Leavitt path algebras of $E$ and $F$ for any field $K$.
\end{remark}

\begin{corollary}
Let $E$ and $F$ be finite graphs with no sinks, and let $K$ be any field.  If $X_E$ is conjugate to $X_F$, then $L_K(E)$ is isomorphic to $L_K(F)$.
\end{corollary}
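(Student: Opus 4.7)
The plan is to deduce this corollary directly from the Leavitt path algebra analogue of Theorem~\ref{ConjIsoGen} noted in the remark immediately preceding the statement. First I would observe that finite graphs with no sinks are automatically row-finite, and that for such graphs any conjugacy $\psi : X_F \to X_E$ is automatically a bounded sliding block code with bounded inverse (as pointed out in Remark~\ref{explicit-iso-rem}, which itself follows because when $E^{1}$ and $F^{1}$ are finite, $X_E$ and $X_F$ are finite-symbol shifts and every sliding block code on a compact finite-symbol shift is bounded by Proposition~\ref{bounded-fff-uniformly-cts-prop}). Hence the hypotheses of the Leavitt path algebra version of Theorem~\ref{ConjIsoGen} are satisfied.

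Next I would invoke the three-step isomorphism recipe that produces the $C^{*}$-algebra isomorphism in Theorem~\ref{ConjIsoGen}, but now performed for Leavitt path algebras. Concretely: pass to the $M$\textsuperscript{th} higher block presentation via Proposition~\ref{recodeone} to recode $\psi$ to a $1$-block conjugacy $\psi^{[M]} : X_{F^{[M]}} = (X_F)^{[M]} \to X_E$ with bounded inverse (using Lemma~\ref{HBG} to identify the higher-block shift with the edge shift of the higher-block graph $F^{[M]}$). Then apply the Leavitt path algebra analogue of Theorem~\ref{ConjIso} to $\psi^{[M]}$ to obtain an explicit $K$-algebra isomorphism $\rho : L_K(E) \to L_K(F^{[M]})$ defined on generators by the same formulas
\[
\rho(e) = \sum_{\substack{g \in (F^{[M]})^1 \\ \exists \alpha \in r(e) E^{n-1} \\ \Phi(e\alpha)=g}} g, \qquad \rho(v) = \sum_{\substack{g \in (F^{[M]})^1 \\ \exists \beta \in v E^n \\ \Phi(\beta)=g}} g g^{*},
\]
where $\Phi$ is the $n$-block map associated with the inverse of $\psi^{[M]}$. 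Finally, compose with the higher-block graph isomorphism $L_K(F^{[M]}) \cong L_K(F)$, which is the Leavitt path algebra analogue of \cite[Theorem~3.1]{Bates} and holds for row-finite graphs with no sinks.

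The main obstacle — or at least the only nontrivial verification — is checking that the constructions in Theorem~\ref{ConjIso}, Lemma~\ref{HBG}, and Theorem~\ref{ConjIsoGen} really do carry over \emph{mutatis mutandis} to the Leavitt path algebra setting. For Theorem~\ref{ConjIso} one must verify that the candidate elements $S_e, P_v$ in $L_K(F)$ satisfy the Leavitt path algebra relations of Definition~\ref{Leavitt-def}: the relation $S_e^{*} S_e = P_{r(e)}$ is a purely algebraic computation with the Leavitt relations (CK1)--(CK4) and transcribes verbatim from the $C^{*}$-proof; the relation $\sum_{s(e)=v} S_e S_e^{*} = P_v$ likewise transcribes, using that in a row-finite graph with no sinks the sum is finite. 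Injectivity of the induced homomorphism, which in the $C^{*}$-case relied on the gauge-invariant uniqueness theorem, is replaced in the Leavitt setting by the graded uniqueness theorem (Tomforde's or Ara--Moreno--Pardo's version), whose hypotheses are satisfied by exactly the same argument since $\rho$ is manifestly graded for the canonical $\Z$-grading and sends each vertex idempotent to a nonzero sum of mutually orthogonal nonzero idempotents. Surjectivity is the identical computation performed in the proof of Theorem~\ref{ConjIso}. With these routine transcriptions in place the corollary follows immediately.
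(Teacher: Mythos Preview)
Your proposal is correct and follows exactly the approach the paper takes: the corollary is stated without proof immediately after the remark that the proofs of Theorem~\ref{ConjIso}, \cite[Theorem~3.1]{Bates}, and Theorem~\ref{ConjIsoGen} go through \emph{mutatis mutandis} for Leavitt path algebras, together with Remark~\ref{explicit-iso-rem} that for finite graphs any conjugacy and its inverse are automatically bounded. Your detailed transcription (graded uniqueness replacing gauge-invariant uniqueness, etc.) simply unpacks what the paper leaves implicit.
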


\end{document}